\newtheorem{theorem}{Theorem}[section]
\newtheorem{proposition}[theorem]{Proposition}
\newtheorem{lemma}[theorem]{Lemma}
\newtheorem{corollary}[theorem]{Corollary}
\theoremstyle{definition}
\newtheorem*{claim*}{Claim}
\newtheorem{definition}[theorem]{Definition}
\newtheorem{remark}[theorem]{Remark}
\begin{document}
\title{(Looking For) The Heart of Abelian Polish Groups}
\author{Martino Lupini}
\address{Dipartimento di Matematica, Universit\`{a} di Bologna, Piazza di
Porta S. Donato, 5, 40126 Bologna,\ Italy}
\email{martino.lupini@unibo.it}
\urladdr{http://www.lupini.org/}
\thanks{The author was partially supported by the Marsden Fund Fast-Start
Grant VUW1816, the Rutherford Discovery Fellowship VUW2002 from the Royal
Society of New Zealand, and the Starting Grant ``Definable Algebraic
Topology'' from the European Research Council.}
\subjclass[2000]{Primary 54H05 , 20K45, 18F60; Secondary 26E30 , 18G10, 46M15%
}
\keywords{Abelian Polish group, non-Archimedean abelian Polish group,
locally compact abelian Polish group, abelian group with a Polish cover,
quasi-abelian category, abelian category, left heart, non-Archimedean valued
field, Fr\'{e}chet space, Banach space, Borel complexity}
\date{\today }

\begin{abstract}
We prove that the category $\mathcal{M}$ of abelian groups with a Polish
cover introduced in collaboration with Bergfalk and Panagiotopoulos is the
left heart of (the derived category of) the quasi-abelian category $\mathcal{%
A}$ of abelian Polish groups in the sense of Beilinson--Bernstein--Deligne
and Schneiders. Thus, $\mathcal{M}$ is an abelian category containing $%
\mathcal{A}$ as a full subcategory such that the inclusion functor $\mathcal{%
A}\rightarrow \mathcal{M}$ is exact and finitely continuous. Furthermore, $%
\mathcal{M}$ is uniquely characterized up to equivalence by the following
universal property: for every abelian category $\mathcal{B}$, a functor $%
\mathcal{A}\rightarrow \mathcal{B}$ is exact and finitely continuous if and
only if it extends to an exact and finitely continuous functor $\mathcal{M}%
\rightarrow \mathcal{B}$. In particular, this provides a description of the
left heart of $\mathcal{A}$ as a concrete category.

We provide similar descriptions of the left heart of a number of categories
of algebraic structures endowed with a topology, including: non-Archimedean
abelian Polish groups; locally compact abelian Polish groups; totally
disconnected locally compact abelian Polish groups; Polish $R$-modules, for
a given Polish group or Polish ring $R$; and separable Banach spaces and
separable Fr\'{e}chet spaces over a separable complete non-Archimedean
valued field.
\end{abstract}

\maketitle

\section{Introduction}

The category of abelian groups with a Polish cover and Borel-definable group
homomorphisms was recently introduced in collaboration with Bergfalk and
Panagiotopoulos \cite{bergfalk_definable_2020,bergfalk_definable_2022}. In
this work, we showed that several classical invariants from homological
algebra and algebraic topology, including \textrm{Ext} of countable groups,
Steenrod homology of compact metrizable spaces, and \v{C}ech cohomology of
locally compact metrizable spaces can be seen as functors to the category of
abelian groups with a Polish cover. These provide \emph{definable }%
refinements of such invariants that are \emph{finer}, \emph{richer}, and 
\emph{more rigid }than the purely algebraic ones.

In this paper we prove that the category $\mathcal{M}$ of abelian groups
with a Polish cover and Borel-definable group homomorphisms is an abelian
category. The category $\mathcal{A}$ of abelian Polish groups is a full
subcategory of $\mathcal{M}$, such that the inclusion functor $\mathcal{A}%
\rightarrow \mathcal{M}$ is finitely continuous and exact. Furthermore, $%
\mathcal{M}$ is characterized up to equivalence by the following universal
property: a functor from the category $\mathcal{A}$ of abelian Polish groups
to an abelian category is finitely continuous and exact if and only if it is
isomorphic to a functor that extends to a finitely continuous exact functor
on $\mathcal{M}$, in which case such an extension is unique up to
isomorphism. In other words, $\mathcal{M}$ together with the inclusion $%
\mathcal{A}\rightarrow \mathcal{M}$ is a \emph{universal arrow }\cite[%
Section III.1]{mac_lane_categories_1998} from $\mathcal{A}$ to the forgetful
functor from the category of abelian categories and finitely continuous
exact functors (identified up to isomorphism) to the category of
quasi-abelian categories and finitely continuous exact functors (identified
up to isomorphism). This universal property also identifies $\mathcal{M}$ as
the \emph{left heart }\textrm{LH}$\left( \mathcal{A}\right) $ of the
quasi-abelian category $\mathcal{A}$ (where \textquotedblleft left heart of $%
\mathcal{A}$\textquotedblright\ stands for \textquotedblleft the heart of
the derived category of $\mathcal{A}$ with respect to its canonical left
truncation structure\textquotedblright ) as constructed in \cite%
{schneiders_quasi-abelian_1999,rump_almost_2001}; see also \cite%
{beilinson_faisceaux_1982,buhler_algebraic_2011,rump_abelian_2020}.

The core of the proof consists in showing that $\mathcal{M}$ is indeed an
abelian category, which is far from obvious. This is obtained by means of
tools from descriptive set theory, including a selection theorem for Borel
relations of Kechris and Macdonald \cite{kechris_borel_2016}, and a
dichotomy theorem for coset equivalence relations of Solecki \cite%
{solecki_coset_2009}. After it is established that $\mathcal{M}$ is abelian,
this category can be recognized as the left heart of $\mathcal{A}$ by means
of the characterization of the left heart provided in \cite[Proposition
1.2.36]{schneiders_quasi-abelian_1999}.

It is natural to consider the \emph{left }heart of $\mathcal{A}$, rather
than the dual notion of right heart. Indeed, $\mathrm{LH}\left( \mathcal{A}%
\right) $ has the property that the inclusion $\mathcal{A}\rightarrow 
\mathrm{LH}\left( \mathcal{A}\right) $ preserves finite limits, and in
particular maps monomorphisms to monomorphisms (but does not map
epimorphisms to epimorphisms, in general). This is desirable, since $%
\mathcal{A}$ already has the \textquotedblleft right\textquotedblright\
monomorphisms, which are the injective continuous group homomorphisms.
However, $\mathcal{A}$ has in some sense \textquotedblleft too
many\textquotedblright\ epimorphisms, being these the continuous group
homomorphisms \emph{with dense image}. This is corrected in \textrm{LH}$%
\left( \mathcal{A}\right) $, where the epimorphisms are precisely the \emph{%
surjective} Borel-definable homomorphisms. The fact that the forgetful
functor from $\mathcal{A}$ to the category $\mathbf{Ab}$ of discrete abelian
groups is not finitely cocontinuous can be seen as a manifestation of the
fact that $\mathcal{A}$ has \textquotedblleft too many\textquotedblright\
epimorphisms. In particular, the forgetful functor $\mathcal{A}\rightarrow 
\mathbf{Ab}$ does not extend to a functor on the right heart of $\mathcal{A}$%
. However, being exact and finitely continuous, it extends to a forgetful
functor $\mathrm{LH}\left( \mathcal{A}\right) \rightarrow \mathbf{Ab}$. This
shows that the objects of $\mathrm{LH}\left( \mathcal{A}\right) $ can be
regarded as groups with additional structure, but the same cannot be said
for the objects of the right heart.

We provide similar descriptions of the left heart of several naturally
occurring quasi-abelian categories, including:

\begin{itemize}
\item the category of non-Archimedean abelian Polish groups;

\item the category of locally compact abelian Polish groups;

\item the category of totally disconnected locally compact Polish groups;

\item the category of Polish $R$-modules, for a given Polish group or Polish
ring $R$;

\item the categories of separable Fr\'{e}chet spaces and separable Banach
spaces over a Polish non-Archimedean valued field $K$.
\end{itemize}

The left heart of a quasi-abelian category is also described in \cite%
{schneiders_quasi-abelian_1999,beilinson_faisceaux_1982} as a category of
formal quotients; see also the work of Waelbroeck and Vasilescu on spaces of
formal quotients of Banach, Fr\'{e}chet, or bornological spaces \cite%
{buhler_algebraic_2011,waelbroeck_bornological_2005,wegner_heart_2017,waelbroeck_quotient_1982,waelbroeck_quotient_1989,waelbroeck_category_1986,waelbroeck_category_1993,vasilescu_spectral_1987,vasilescu_spectral_1989,vasilescu_spectral_1988}%
. However, in that context the morphisms are defined abstractly by formally
inverting certain arrows. In this context, we identify the morphisms as a
concrete collection of group homomorphisms that satisfy the natural
requirement of being Borel-definable. This provides for each of the
quasi-abelian categories mentioned above a description of the left heart as
a \emph{concrete category}.

These results make available to the study of abelian Polish groups and
groups with a Polish cover tools from category theory and homological
algebra. Furthermore, they provide the foundation stone for the study of
homological functors on abelian Polish groups and their derived functors.
Homological algebra in the context of locally compact abelian Polish groups
has been studied in \cite%
{moskowitz_homological_1967,hoffmann_homological_2007,fulp_extensions_1971,prosmans_derived_1999}%
. The concrete description of the left heart of abelian Polish groups
provided in this paper has already found a number of applications. These
include the characterization of the injective and projective objects in the
left heart of locally compact Polish abelian groups \cite%
{bergfalk_applications_2023}, and the calculation of the potential Borel
complexity of the classification problem for extensions of countable abelian
groups \cite{lupini_classification_2022,lupini_classification_2022-1}.

In this paper we begin with recalling in Section \ref{Section:category} the
notions of abelian category, quasi-abelian category, and the left heart of a
quasi-abelian category. We then present in Section \ref{Section:polish-cover}
the notions of Borel-definable set and Borel-definable group from \cite%
{bergfalk_definable_2022,lupini_definable_2020,lupini_definable_2020-1}, and
the more restrictive notion of \emph{group with a Polish cover }from \cite%
{bergfalk_definable_2020}. In Section \ref{Subsection:subgroups} we
introduce the notion of Polishable subgroup of a group with a Polish cover.
The main result here is that images and preimages of Polishable subgroups of
abelian groups with a Polish cover are Polishable; see Proposition \ref%
{Proposition:preimage}. In Section \ref{Section:complexity} we reformulate
in this context some results concerning the Borel complexity of Polishable
subgroups from \cite{lupini_complexity_2022}. In Section \ref%
{Section:solecki} we describe a canonical chain of Polishable subgroups of a
given abelian group with a Polish cover, which we call \emph{Solecki
subgroups}. These were originally defined by Solecki in \cite%
{solecki_polish_1999}, and have also been considered in \cite%
{farah_borel_2006,solecki_coset_2009}. Ulm subgroups of abelian groups with
a Polish cover are introduced in Section \ref{Section:Ulm}. Section \ref%
{Subsection:modules} explains how all the results obtained up to that point
apply more generally to \emph{Polish }$R$-\emph{spaces }for a fixed Polish
group or Polish ring $R$, and in particular to Polish $K$-vector spaces for
a Polish field $K$.

In Section \ref{Section:better-lifts} we show that in certain circumstances
a Borel-definable $R$-homomorphism has a lift that is well-behaved with
respect to the algebraic structure. Finally, in Section \ref{Subsection:LH-A}
we prove the characterization of the left heart of the category of Polish $R$%
-modules (Theorem \ref{Theorem:MR-abelian}), and in Section \ref%
{Subsection:LH-A} a more general result describing the left heart of a
strictly full quasi-abelian subcategory of the category of Polish $R$%
-modules (Theorem \ref{Theorem:left-heart-B}). The latter is applied in
Section \ref{Subsection:examples} to describe the left heart of a number of
categories of algebraic structures endowed with a topology, including:
non-Archimedean Polish $R$-modules, locally compact Polish $R$-modules,
locally bounded vector spaces over a Polish field, separable Banach spaces
and separable Fr\'{e}chet spaces over a separable non-Archimedean valued
field; see Theorem \ref{Theorem:left-heart-B2}.

\subsection*{Acknowledgments}

We are thankful to Jeffrey Bergfalk and Matteo Casarosa for several useful
comments on a preliminary version of this article. We are also grateful to
Su Gao, Alexander Kechris, Jordan Ellenberg, Marco Moraschini, Andr\'{e}
Nies, Aristotelis Panagiotopoulos, Jonathan Rosenberg, Filippo Sarti, S\l %
awomir Solecki, and Alessio Savini for many stimulating conversations and
for their helpful remarks.

\section{Category-theory background\label{Section:category}}

In this section we recall some notions and results from category theory that
are needed in the rest of the paper. For an introduction to category theory,
see \cite{awodey_category_2006,mac_lane_categories_1998}.

\subsection{Additive categories}

Recall that a \emph{preadditive category}, also called an $\mathrm{Ab}$%
-category, is a category $\mathcal{C}$ in which each hom-set \textrm{Hom}$_{%
\mathcal{C}}\left( A,B\right) $ for objects $A$ and $B$ is an abelian group,
in such a way that composition of morphisms is bilinear \cite[Section I.8]%
{mac_lane_categories_1998}. Thus, for morphisms $f_{0},f_{1}:A\rightarrow B$
and $g_{0},g_{1}:B\rightarrow C$ in $\mathcal{C}$, one has that%
\begin{equation*}
\left( g_{0}+g_{1}\right) \circ \left( f_{0}+f_{1}\right) =g_{0}\circ
f_{0}+g_{0}\circ f_{1}+g_{1}\circ f_{0}+g_{1}\circ f_{1}\text{.}
\end{equation*}%
In a preadditive category, binary products and binary coproducts coincide,
and are called \emph{biproducts}. Furthermore, an object $X$ is initial if
and only if it is terminal if and only if $1_{X}\mathrm{\ }$is the zero
element of the abelian group $\mathrm{Hom}_{\mathcal{C}}\left( X,X\right) $,
in which case $X$ is called a \emph{zero object}; see \cite[Section VIII.2]%
{mac_lane_categories_1998} and \cite[Section IX.1]{mac_lane_homology_1995}.
An \emph{additive category }is a preadditive category that has a \emph{zero
object}, denoted by $0$, and such that every pair of objects $A,B$ has a
biproduct, denoted by $A\oplus B$; see \cite[Section VIII.2]%
{mac_lane_categories_1998}. A functor $F:\mathcal{C}\rightarrow \mathcal{D}$
between additive categories is called additive if satisfies $F\left(
f_{0}+f_{1}\right) =F\left( f_{0}\right) +F\left( f_{1}\right) $ whenever $%
f_{0},f_{1}:A\rightarrow B$ are morphisms in $\mathcal{C}$. This is
equivalent to the assertion that $F$ preserves biproducts of pairs of
objects of $\mathcal{C}$; see \cite[Section VIII.2, Proposition 3]%
{mac_lane_categories_1998}.

An \emph{additive subcategory }$\mathcal{B}$ of an additive category $%
\mathcal{A}$ is a (not necessarily full) subcategory of $\mathcal{A}$ that
is also an additive category, and such that the inclusion functor $\mathcal{A%
}\rightarrow \mathcal{B}$ is additive.

\subsection{Quasi-abelian categories}

A \emph{quasi-abelian category} \cite[Definition 4.1]{buhler_exact_2010}
(called \emph{almost abelian} in \cite{rump_almost_2001}) is an additive
category such that:

\begin{enumerate}
\item every morphism has a kernel and a cokernel;

\item the class of kernels is stable under push-out along arbitrary
morphisms, and the class of cokernels is stable under pull-back along
arbitrary morphisms;
\end{enumerate}

see also \cite{schneiders_quasi-abelian_1999}. In a quasi-abelian category,
one defines the image $\mathrm{im}\left( f\right) $ of an arrow $%
f:A\rightarrow B$ to be the subobject $\mathrm{ker}\left( \mathrm{coker}%
\left( f\right) \right) $ of $B$, and the coimage $\mathrm{coim}\left(
f\right) $ to be the quotient $\mathrm{coker}\left( \mathrm{ker}\left(
f\right) \right) $ of $A$ \cite[Definition 4.6]{buhler_exact_2010}. Then $f$
induces a unique arrow $\widehat{f}:\mathrm{coim}\left( f\right) \rightarrow 
\mathrm{im}\left( f\right) $ such that 
\begin{equation*}
\mathrm{im}\left( f\right) \circ \widehat{f}\circ \mathrm{coim}\left(
f\right) =f\text{.}
\end{equation*}%
Such an arrow $\widehat{f}$ is both monic and epic \cite[Proposition 4.8]%
{buhler_exact_2010}. By definition, the arrow $f$ is \emph{strict }if $%
\widehat{f}$ is an isomorphism \cite[Definition 1.1.1]%
{schneiders_quasi-abelian_1999}. One has that:

\begin{itemize}
\item an arrow is a kernel if and only if it is monic and strict;

\item an arrow is a cokernel if and only if it is epic and strict;

\item an arrow $f$ is strict if and only if it has a factorization $f=me$
where $m$ is a strict monomorphism and $e$ is a strict epimorphism;

\item the composition of strict epic arrows is strict \cite[Proposition 1.1.7%
]{schneiders_quasi-abelian_1999};

\item the composition of strict monic arrows is strict.
\end{itemize}

Considering the expression of limits in terms of products and equalizers 
\cite[Proposition 5.21]{awodey_category_2006}, we have that a quasi-abelian
category is \emph{finitely complete}, i.e.\ it has all finite limits. Since
the opposite of a quasi-abelian category is also quasi-abelian, by duality a
quasi-abelian category also has all finite colimits.

\subsection{The left heart of a quasi-abelian category}

An \emph{abelian category }is a quasi-abelian category $\mathcal{M}$ such
that every monic arrow is a kernel, and every epic arrow is a cokernel or,
equivalently, every arrow is strict \cite[Section VIII.3]%
{mac_lane_categories_1998}; see also \cite[Section IX.2]%
{mac_lane_homology_1995}.

A sequence 
\begin{equation*}
0\rightarrow A\overset{f}{\rightarrow }B\overset{g}{\rightarrow }%
C\rightarrow 0
\end{equation*}%
in a quasi-abelian category is \emph{short-exact} or a \emph{kernel-cokernel
pair }if $f$ is a kernel of $g$ and $g$ is a cokernel of $f$. A sequence 
\begin{equation*}
0\rightarrow A\overset{f}{\rightarrow }B\overset{g}{\rightarrow }C
\end{equation*}%
is left short-exact if $f$ is a kernel of $g$, while a sequence%
\begin{equation*}
A\overset{f}{\rightarrow }B\overset{g}{\rightarrow }C\rightarrow 0
\end{equation*}%
is right short-exact if $g$ is a cokernel of $f$.

A functor $F:\mathcal{A}\rightarrow \mathcal{B}$ from a quasi-abelian
category $\mathcal{A}$ to an abelian category $\mathcal{B}$ is:

\begin{itemize}
\item \emph{left exact} if for every short-exact sequence 
\begin{equation*}
0\rightarrow A\overset{f}{\rightarrow }B\overset{g}{\rightarrow }%
C\rightarrow 0,
\end{equation*}%
the sequence%
\begin{equation*}
0\rightarrow F\left( A\right) \overset{F\left( f\right) }{\rightarrow }%
F\left( B\right) \overset{F\left( g\right) }{\rightarrow }F\left( C\right)
\end{equation*}%
is left short-exact or, equivalently, $F$ preserves the kernels of strict
arrows;

\item \emph{strongly left exact} if for every left short-exact sequence 
\begin{equation*}
0\rightarrow A\overset{f}{\rightarrow }B\overset{g}{\rightarrow }C
\end{equation*}%
the sequence%
\begin{equation*}
0\rightarrow F\left( A\right) \overset{F\left( f\right) }{\rightarrow }%
F\left( B\right) \overset{F\left( g\right) }{\rightarrow }F\left( C\right)
\end{equation*}%
is left short-exact or, equivalently, $F$ preserves the kernels of arbitrary
arrows;

\item \emph{right exact} if for every short-exact sequence 
\begin{equation*}
0\rightarrow A\overset{f}{\rightarrow }B\overset{g}{\rightarrow }%
C\rightarrow 0,
\end{equation*}%
the sequence%
\begin{equation*}
F\left( A\right) \overset{F\left( f\right) }{\rightarrow }F\left( B\right) 
\overset{F\left( g\right) }{\rightarrow }F\left( C\right) \rightarrow 0
\end{equation*}%
is right short-exact or, equivalently, $F$ preserves the cokernel of strict
arrows;

\item \emph{strongly right exact} if for every short-exact sequence 
\begin{equation*}
A\overset{f}{\rightarrow }B\overset{g}{\rightarrow }C\rightarrow 0,
\end{equation*}%
the sequence%
\begin{equation*}
F\left( A\right) \overset{F\left( f\right) }{\rightarrow }F\left( B\right) 
\overset{F\left( g\right) }{\rightarrow }F\left( C\right) \rightarrow 0
\end{equation*}%
is right short-exact or, equivalently, $F$ preserves the cokernel of
arbitrary arrows

\item \emph{exact }if it is both left and right exact;
\end{itemize}

see \cite[Section 1]{rump_almost_2001} and \cite[Section 1.5]%
{stein_universal_2014}.

\begin{lemma}
Let $\mathcal{A}$ and $\mathcal{B}$ be a quasi-abelian categories, and $F:%
\mathcal{A}\rightarrow \mathcal{B}$ be a functor. The following assertions
are equivalent:

\begin{enumerate}
\item $F$ is finitely continuous;

\item $F$ is additive, left exact, and preserves monomorphisms;

\item $F$ is additive and strongly left exact.
\end{enumerate}

If $\mathcal{A}$ is abelian, these are also equivalent to:

\begin{enumerate}
\item[(4)] $F$ is additive and left exact.
\end{enumerate}
\end{lemma}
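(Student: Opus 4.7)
The plan is to first reduce \emph{finitely continuous} to the single concrete condition that $F$ preserves all kernels. Since $F$ is additive between additive categories, it automatically preserves finite biproducts, and hence all finite products. In any additive category with kernels, the equalizer of a parallel pair $f_0,f_1$ is just $\ker(f_0-f_1)$, so preserving equalizers is the same as preserving kernels. Because every finite limit is built from finite products and equalizers, finite continuity of $F$ is equivalent to $F$ preserving kernels of all arrows.

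With this reformulation, the implication $(2)\Rightarrow(1)$ is essentially formal. A short-exact sequence $0\to A\xrightarrow{f}B\xrightarrow{g}C\to 0$ in $\mathcal{A}$ has $f=\ker(g)$, so applying $F$ gives $Ff=\ker(Fg)$, yielding left exactness. Moreover, $F$ preserves monomorphisms because an arrow is monic exactly when its kernel is a zero object, and $F$ preserves zero objects, being additive.

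The substantive direction is $(1)\Rightarrow(2)$, and for this I would exploit the canonical factorization available in any quasi-abelian category. Given any arrow $f:A\to B$, write $f=\iota\circ\widehat{f}\circ\pi$, where $\pi:A\to\mathrm{coim}(f)$ is the strict epic cokernel of $\ker(f)$, $\iota:\mathrm{im}(f)\to B$ is a strict monomorphism, and $\widehat{f}$ is both monic and epic. The sequence $0\to\ker(f)\to A\xrightarrow{\pi}\mathrm{coim}(f)\to 0$ is short-exact, so by left exactness the kernel of $F\pi$ is $F(\ker f)$. Since $\iota$ and $\widehat{f}$ are both monic, so is $\iota\circ\widehat{f}$, and by hypothesis $F(\iota\circ\widehat{f})$ is monic as well. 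Writing $Ff=F(\iota\circ\widehat{f})\circ F\pi$ and using that postcomposing with a monomorphism does not change the kernel (if $m$ is monic, then $\ker(m\circ k)=\ker(k)$), we conclude $\ker(Ff)=\ker(F\pi)=F(\ker f)$. The main subtlety, and the reason left exactness alone does not suffice in the quasi-abelian setting, is precisely that $\widehat{f}$ need not be an isomorphism, so the morphism $\iota\circ\widehat{f}$ is monic but not necessarily strict; this is exactly where the hypothesis of preserving monomorphisms is needed.

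For the third equivalence, assume $\mathcal{A}$ is abelian. The implication $(1)\Rightarrow(3)$ is trivial. For $(3)\Rightarrow(1)$, in an abelian category every monomorphism $m:A\to B$ is strict, hence fits into a short-exact sequence $0\to A\xrightarrow{m}B\to\mathrm{coker}(m)\to 0$; applying left exactness then shows $Fm$ is monic, so $F$ preserves monomorphisms automatically, reducing to $(1)$.
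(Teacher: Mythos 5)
Your proposal is correct and follows essentially the same route as the paper: the paper's proof of (1)$\Rightarrow$(2) uses the same canonical decomposition $f = k\circ j$ with $j$ the coimage cokernel and $k$ the (possibly non-strict) monomorphism, applies left exactness to $0\to\ker(f)\to A\to\mathrm{coim}(f)\to 0$, and then uses that postcomposition with the monomorphism $F(k)$ does not change the kernel, while the remaining implications are handled exactly as you do. Your explicit remark that equalizers are kernels of differences is a slightly more detailed justification of the reduction to preserving kernels, but the argument is the same.
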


\begin{proof}
(1)$\Rightarrow $(2) If $F$ preserves all finite limits, then in particular
it preserves kernels and biproducts. Thus, it preserves monomorphisms, since
an arrow $f$ is monic if and only if the kernel of $f$ is zero. Furthermore,
considering that $F$ preserves the kernel of strict epimorphisms, we have
that $F$ is left exact.

(2)$\Rightarrow $(3) Suppose that $F$ is additive, left exact, and preserves
monomorphisms. We claim that $F$ preserves kernels of arbitrary morphisms.
Suppose that $f:A\rightarrow B$ is a morphism in $\mathcal{A}$ and let $%
e:E\rightarrow A$ be a kernel of $f$. Consider the canonical decomposition $%
f=k\circ j$ as in \cite[Proposition 1.1.14]{schneiders_quasi-abelian_1999},
where $j:A\rightarrow \mathrm{coim}\left( f\right) $ is a cokernel and $k:%
\mathrm{coim}\left( f\right) \rightarrow B$ is a \emph{monomorphism }(which
is not necessarily a kernel). Since $F$ preserves monomorphisms, we have
that $F\left( k\right) $ is a monomorphism.

Since $k$ is monic, we have that $e$ is a kernel of $j$. Since $j$ is a
cokernel and $F$ is left exact, we have that $F\left( e\right) $ is a kernel
of $F\left( j\right) $. Since $F\left( k\right) $ is monic, $F\left(
e\right) $ is a kernel of $F\left( k\right) \circ F\left( j\right) =F\left(
f\right) $. This concludes the proof that $F$ preserves kernels of arbitrary
morphisms.

(3)$\Rightarrow $(1) Since $F$ is additive, it preserves biproducts and the
zero object. Hence, by induction, it preserves finite products. Since $F$ is
strongly left exact, it preserves kernels. Hence, being additive, it also
preserves equalizers. Considering the expression of finite limits in terms
of finite products and equalizers, we have that $F$ preserves all finite
limits \cite[Proposition 5.21]{awodey_category_2006}.

Finally, if $\mathcal{A}$ is abelian, then every arrow in $\mathcal{A}$ is
strict, and a left exact functor is also strongly exact.
\end{proof}

\begin{corollary}
Let $\mathcal{A}$ and $\mathcal{B}$ be quasi-abelian categories, and $F:%
\mathcal{A}\rightarrow \mathcal{B}$ be a functor. The following assertions
are equivalent:

\begin{enumerate}
\item $F$ is exact and finitely continuous;

\item $F$ is additive, exact, and preserves monomorphisms;

\item $F$ is additive, right exact, and strongly left exact.
\end{enumerate}

If $\mathcal{A}$ is abelian, these are also equivalent to:

\begin{enumerate}
\item[(4)] $F$ is additive and exact.
\end{enumerate}
\end{corollary}

If $\mathcal{A}$ is a abelian category, then an \emph{abelian subcategory }%
of $\mathcal{A}$ is a (not necessarily full) subcategory $\mathcal{B}$ that
is also an abelian category and such that the inclusion functor is additive
and exact. Similarly, if $\mathcal{A}$ is a quasi-abelian category, then a 
\emph{quasi-abelian subcategory }of $\mathcal{A}$ is a (not necessarily
full) subcategory $\mathcal{B}$ that is also a quasi-abelian category, and
such that the inclusion functor $\mathcal{B}\rightarrow \mathcal{A}$ is
finitely continuous and finitely cocontinuous.

Let $\mathcal{A}$ be a quasi-abelian category.\ Then there exists an
essentially unique (left) \textquotedblleft completion\textquotedblright\ of 
$\mathcal{A}$ to an abelian category. This is constructed:

\begin{itemize}
\item in \cite[Section 1.2.4]{schneiders_quasi-abelian_1999}, building on 
\cite{beilinson_faisceaux_1982}, and more generally for \emph{additive
regular categories }in \cite{henrard_left_2021}---see also \cite[Chapter III]%
{buhler_algebraic_2011}---where it is called the \emph{left heart }(\emph{%
coeur}) of (the derived category of) $\mathcal{A}$ and denoted by $\mathcal{%
LH}(\mathcal{A})$;

\item in \cite[Section 3]{rump_almost_2001}, under the weaker assumption
that $\mathcal{A}$ is \emph{left quasi-abelian}, where it is called the 
\emph{left abelian cover }of $\mathcal{A}$ and denoted by $\mathrm{Q}_{l}(%
\mathcal{A})$;

\item in \cite{bodzenta_abelian_2020}, in the more general context of \emph{%
exact categories}, where it is called the \emph{right abelian envelope }of $%
\mathcal{A}$ and denoted by $\mathcal{A}_{r}(\mathcal{A})$;

\item in \cite[Section 3]{rump_abelian_2020}, in the more general context of 
\emph{left exact categories}, where it is denoted by $\mathbf{Q}_{\ell }(%
\mathcal{A})$ and called the \emph{left quotient category }of $\mathcal{A}$,
as in this case $\mathbf{Q}_{\ell }(\mathcal{A})$ is left abelian but not
necessarily abelian.
\end{itemize}

Following \cite{schneiders_quasi-abelian_1999}, we will call such a category
the \emph{left heart }of $\mathcal{A}$, and denote it by $\mathrm{LH}(%
\mathcal{A})$. We collect in the following proposition the main properties
of the this category and the inclusion functor $\mathcal{A}\rightarrow 
\mathrm{LH}(\mathcal{A})$.

\begin{proposition}
\label{Proposition:left-heart}Let $\mathcal{A}$ be a quasi-abelian category,
and let $\mathcal{A}\subseteq \mathrm{LH}(\mathcal{A})$ be its left heart as
in \cite[Definition 1.2.18]{schneiders_quasi-abelian_1999}. Denote by $I:%
\mathcal{A}\rightarrow \mathrm{LH}(\mathcal{A})$ the inclusion functor, and
by $U:\mathrm{LH}(\mathcal{A})\rightarrow \mathcal{A}$ the functor given in 
\cite[Definition 1.2.24 and Definition 1.2.26]{schneiders_quasi-abelian_1999}%
. Then we have that:

\begin{itemize}
\item $I$ is finitely continuous and exact \cite[Corollary 1.2.28,
Proposition 1.2.29]{schneiders_quasi-abelian_1999};

\item The essential image of $I$ is closed under extensions, i.e.\ if 
\begin{equation*}
0\rightarrow A\rightarrow B\rightarrow C\rightarrow 0
\end{equation*}%
is a short exact sequence in $\mathrm{LH}(\mathcal{A})$ such that $A$ and $C$
are in $\mathcal{A}$, then $B$ is isomorphic to an object of $\mathcal{A}$ 
\cite[Definition 1.2.18]{schneiders_quasi-abelian_1999};

\item The essential image of $I$ is stable by subobject, i.e.\ if $%
A\rightarrow B$ is a monic arrow in $\mathrm{LH}(\mathcal{A})$ such that $B$
is in $\mathcal{A}$, then $A$ is isomorphic to an object of $\mathcal{A}$;

\item Every object $M$ of $\mathrm{LH}(\mathcal{A})$ has a \emph{presentation%
} given by a short exact sequence $0\rightarrow A_{0}\rightarrow \hat{A}%
\rightarrow M\rightarrow 0$ in $\mathrm{LH}(\mathcal{A})$, where the arrow $%
A_{0}\rightarrow \hat{A}$ is in $\mathcal{A}$, such that $M$ is isomorphic
to an object of $\mathcal{A}$ if and only if $A_{0}\rightarrow \hat{A}$ is a
strict monomorphism in $\mathcal{A}$ \cite[Section 3]{rump_almost_2001};

\item There exist a canonical isomorphism $i:U\circ I\rightarrow \mathrm{id}%
_{\mathcal{A}}$ and a canonical epimorphism $e:\mathrm{id}_{\mathrm{LH}(%
\mathcal{A})}\rightarrow I\circ U$ that establish an adjunction $U\dashv I$
witnessing that $\mathcal{A}$ is reflective subcategory of $\mathrm{LH}(%
\mathcal{A})$ \cite[Proposition 1.2.27]{schneiders_quasi-abelian_1999};

\item For every abelian category $\mathcal{M}$, the functor $I$ induces an
equivalence of categories from the category of right exact functors $\mathrm{%
LH}(\mathcal{A})\rightarrow \mathcal{M}$ to the category of right exact
functors $\mathcal{A}\rightarrow \mathcal{M}$, which restricts to an
equivalence of categories from the category of finitely continuous exact
functors $\mathrm{LH}(\mathcal{A})\rightarrow \mathcal{M}$ to the category
of finitely continuous exact functors $\mathcal{A}\rightarrow \mathcal{M}$ 
\cite[Proposition 1.2.34]{schneiders_quasi-abelian_1999}. Thus, $\mathrm{LH}%
\left( -\right) $ is the left adjoint of the inclusion functor from the
category of abelian categories and right exact functors (respectively,
finitely continuous exact functors) to the category of quasi-abelian
categories and right exact functors (respectively, finitely continuous exact
functors);

\item Let $\mathcal{B}$ be an abelian category, and let $J:\mathcal{A}%
\rightarrow \mathcal{B}$ be a fully faithful functor such that the essential
image of $J$ is closed under subobjects, and such that for every object $B$
of $\mathcal{B}$ there exists an epimorphism $J\left( A\right) \rightarrow B$
in $\mathcal{B}$ for some object $A$ of $\mathcal{A}$. Then $J$ extends to
an equivalence of categories $\mathrm{LH}(\mathcal{A})\rightarrow \mathcal{B}
$ \cite[Proposition 1.2.36]{schneiders_quasi-abelian_1999}.
\end{itemize}
\end{proposition}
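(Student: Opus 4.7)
The strategy is to import each of the seven assertions from Schneiders' foundational analysis of the left heart in \cite[Section 1.2]{schneiders_quasi-abelian_1999}, supplemented by the presentation picture of Rump \cite[Section 3]{rump_almost_2001} when it streamlines the argument. The starting point is to recall the explicit description of $\mathrm{LH}(\mathcal{A})$ as the heart of a natural t-structure on the derived category $D(\mathcal{A})$: every object of $\mathrm{LH}(\mathcal{A})$ is represented by a $2$-term complex $A_{0}\to \hat{A}$ with monic differential in $\mathcal{A}$, the inclusion $I:\mathcal{A}\to \mathrm{LH}(\mathcal{A})$ sends $A$ to the complex $0\to A$, and the reflector $C$ sends a presentation $A_{0}\to \hat{A}$ to the coimage $\mathrm{coim}(A_{0}\to \hat{A})$ computed in $\mathcal{A}$.

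With this description at hand, the structural bullets fall into place. Bullets (i)--(iv) are verified by translating the conditions on short exact sequences, monomorphisms, and subobjects into the $2$-term complex language and checking the required conditions on presentations; bullet (iv) in particular reduces to the observation that an object $M=[A_{0}\to \hat{A}]$ of $\mathrm{LH}(\mathcal{A})$ lies in the essential image of $I$ precisely when $A_{0}\to \hat{A}$ is a strict monomorphism in $\mathcal{A}$, in which case $M$ is isomorphic to its cokernel. For the adjunction in bullet (v), the unit $i_{A}:C(I(A))\to A$ is an isomorphism because $C$ applied to $0\to A$ returns $A$, and the counit $e_{M}:M\to I(C(M))$ is epic because, for a presentation $A_{0}\to \hat{A}$ of $M$, the induced arrow $\hat{A}\to \mathrm{coim}(A_{0}\to \hat{A})$ is a strict epimorphism in $\mathcal{A}$ and hence, via $I$, an epimorphism in $\mathrm{LH}(\mathcal{A})$.

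The universal property in bullet (vi) is the main content, and I expect it to be the step requiring the most bookkeeping. Given a right exact functor $F:\mathcal{A}\to \mathcal{M}$ with $\mathcal{M}$ abelian, I would extend it to $\tilde{F}:\mathrm{LH}(\mathcal{A})\to \mathcal{M}$ by sending an object $M$ with presentation $A_{0}\to \hat{A}$ to $\mathrm{coker}(F(A_{0})\to F(\hat{A}))$; well-definedness and functoriality follow from a diagram chase showing that any two presentations of $M$ are connected by morphisms of presentations, and right exactness of $\tilde{F}$ is then automatic from the defining cokernel. Uniqueness up to natural isomorphism comes from the fact that a right exact functor on $\mathrm{LH}(\mathcal{A})$ is determined by its values on the essential image of $I$, since every object admits a right short exact sequence with the two nonzero terms in that image. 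Restricting attention to functors that are additionally left exact (equivalently, finitely continuous) on both sides identifies the second equivalence.

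Finally, bullet (vii) is deduced by applying bullet (vi) to $J:\mathcal{A}\to \mathcal{B}$, which is exact and finitely continuous because $\mathcal{B}$ is abelian and the essential image of $J$ is closed under subobjects (so $J$ preserves kernels computed in $\mathcal{A}$ as kernels in $\mathcal{B}$), to produce an exact extension $\tilde{J}:\mathrm{LH}(\mathcal{A})\to \mathcal{B}$. It remains to verify that $\tilde{J}$ is fully faithful and essentially surjective: essential surjectivity uses the hypothesis that every $B\in \mathcal{B}$ is the target of an epimorphism from some $J(A)$, whence $B$ is the cokernel of the kernel of this epimorphism, the kernel lying in the essential image of $J$ by closure under subobjects, so $B\cong \tilde{J}(M)$ for the associated presentation; full faithfulness is verified on presentations, where the hypotheses on $J$ reduce the required bijection on hom-sets to one between morphisms of $2$-term complexes in $\mathcal{A}$ that $J$ handles by full faithfulness.
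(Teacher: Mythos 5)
The paper does not actually prove this proposition: it is a summary of results imported verbatim from Schneiders \cite[Section 1.2]{schneiders_quasi-abelian_1999} and Rump \cite[Section 3]{rump_almost_2001}, with each bullet justified only by a citation. Your sketch therefore cannot be compared to an argument in the paper itself; what it does is reconstruct, in outline, the proofs from those sources, and in that respect it follows the same route the paper is implicitly relying on (two-term complexes with monic differential as presentations, $I(A)=[0\rightarrow A]$, the reflector computing the degree-zero cohomology, the cokernel formula for the extension of a right exact functor, and the recognition criterion for bullet (vii)).

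There is, however, one substantive error you should fix: the functor $C$ sends a presentation $[A_{0}\rightarrow \hat{A}]$ to the \emph{cokernel} of $A_{0}\rightarrow \hat{A}$ computed in $\mathcal{A}$, not to its coimage. Since the differential of a presentation is a monomorphism, its coimage is $A_{0}$ itself, which would give $C(I(A))=\mathrm{coim}(0\rightarrow A)=0$ rather than $A$ and would leave no natural arrow $\hat{A}\rightarrow \mathrm{coim}(A_{0}\rightarrow \hat{A})$ to witness that $e_{M}$ is epic; your own computation that ``$C$ applied to $0\rightarrow A$ returns $A$'' is only consistent with the cokernel, so this is presumably a slip, but as written both the description of $C$ and the argument for bullet (v) are wrong. (You have also swapped the names of the unit and counit of $C\dashv I$, which is harmless.) Two further points are under-justified. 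In bullet (vii) you argue only that $J$ preserves kernels; to get exactness you also need that $J$ sends strict epimorphisms to epimorphisms, which follows from the subobject-closure hypothesis by showing that the monomorphism $J(A'')\rightarrow J(A')$ through which $J(f)$ factors comes from a monomorphism $m$ in $\mathcal{A}$ admitting a section induced by the universal property of the cokernel, hence is an isomorphism. In bullet (vi), well-definedness of $\tilde{F}(M)=\mathrm{coker}(F(A_{0})\rightarrow F(\hat{A}))$ requires more than the existence of morphisms connecting two presentations: one must check that a morphism of presentations inducing an isomorphism in $\mathrm{LH}(\mathcal{A})$ induces an isomorphism of cokernels after applying $F$, and this is exactly where right exactness of $F$ enters; the sketch should say so.
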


Recall that a category is \emph{countably complete} if it has all countable
limits. A quasi-abelian category is countably complete if and only if it has
countable products. A functor between countably complete categories is \emph{%
countably continuous} if it commutes with countable limits. One can show
that the left heart of a countably complete quasi-abelian category is
countably complete. In fact, we have the following result:

\begin{proposition}
Let $\mathcal{A}$ be a quasi-abelian category. If $\mathcal{A}$ is countably
complete, then $\mathrm{LH}\left( \mathcal{A}\right) $ is countably
complete, and the inclusion functor $I:\mathcal{A}\rightarrow \mathrm{LH}%
\left( \mathcal{A}\right) $ is countably continuous.

Suppose that $\mathcal{M}$ is a countably complete abelian category. Let $F:%
\mathcal{A}\rightarrow \mathcal{M}$ be a finitely continuous exact functor,
and let $\hat{F}:\mathrm{LH}\left( \mathcal{A}\right) \rightarrow \mathcal{M}
$ be the (unique up to isomorphism) finitely continuous exact functor such
that $\hat{F}\circ I$ is isomorphic to $F$. If $F$ is countably continuous,
then $\hat{F}$ is countably continuous.
\end{proposition}

\section{Abelian groups with a Polish cover\label{Section:polish-cover}}

\subsection{Borel-definable groups}

We present here the notion of Borel-definable set and Borel-definable group
as in \cite%
{lupini_definable_2020,lupini_definable_2020-1,bergfalk_definable_2022}. We
begin with recalling the notion of \emph{idealistic }equivalence relation
from \cite{kechris_countable_1994}; see also \cite[Definition 5.4.9]%
{gao_invariant_2009} and \cite{kechris_borel_2016}. We will consider as in 
\cite{lupini_definable_2020} a slightly more generous notion. Recall that a $%
\sigma $-filter on a set $C$ is a nonempty family $\mathcal{F}$ of nonempty
subsets of $C$ that is closed under countable intersections and such that $%
A\subseteq B\subseteq C$ and $A\in \mathcal{F}$ implies $B\in \mathcal{F}$.

\begin{definition}
Suppose that $X$ is a standard Borel space, and $E$ is an equivalence
relation on $X$. We say that $E$ is \emph{idealistic }if there exist a Borel
function $s:X\rightarrow X$ and an assignment $C\mapsto \mathcal{F}_{C}$
mapping each $E$-class $C$ to a $\sigma $-filter $\mathcal{F}_{C}$ on $C$
such that $s\left( x\right) Ex$ for every $x\in X$, and for every Borel
subset $A\subseteq X\times X$,%
\begin{equation*}
A_{s,\mathcal{F}}:=\left\{ x\in X:\left\{ x^{\prime }\in \left[ x\right]
_{E}:\left( s\left( x\right) ,x^{\prime }\right) \in A\right\} \in \mathcal{F%
}_{[x]_{E}}\right\}
\end{equation*}%
is a Borel subset of $X$.
\end{definition}

The term idealistic is due to the fact that it can be equivalently defined
in terms of $\sigma $-ideals, in view of the duality between $\sigma $%
-ideals and $\sigma $-filters. If $X$ is a Polish space endowed with a
continuous action of a Polish group $G$, then the corresponding orbit
equivalence relation $E_{G}^{X}$ is idealistic \cite[Proposition 5.4.10]%
{gao_invariant_2009}.

A \emph{Borel-definable set} is a pair $(\hat{X},E)$ where $\hat{X}$ is a
Polish space and $E$ is a Borel and idealistic equivalence relation on $\hat{%
X}$. We denote such a Borel-definable set by $X=\hat{X}/E$, as we think of
it as an explicit presentation of the set $X$ as a quotient of the Polish
space $\hat{X}$ by the \textquotedblleft well-behaved\textquotedblright\
equivalence relation $E$. We identify a Polish space $\hat{X}$ with the
Borel-definable set $X=\hat{X}/E$ where $E$ is the identity relation on $%
\hat{X}$. A subset $Z$ of a Borel-definable set $X=\hat{X}/E$ is Borel if $%
\hat{Z}:=\{x\in \hat{X}:[x]_{E}\in Z\}$ is a Borel subset of $\hat{X}$, in
which case $Z$ is itself a Borel-definable set. Similarly, we say that $Z$
is $\boldsymbol{\Sigma }_{1}^{1}$ or analytic if $\hat{Z}$ is an analytic
subset of $\hat{X}$. If $X=\hat{X}/E$ and $Y=\hat{Y}/F$ are Borel-definable
sets, then we define $X\times Y$ to be the Borel-definable set $(\hat{X}%
\times \hat{Y})/\left( E\times F\right) $, where $\left( x,y\right) \left(
E\times F\right) \left( x^{\prime },y^{\prime }\right) \Leftrightarrow
xEx^{\prime }$ and $yFy^{\prime }$. (One can verify that $E\times F$ is
Borel and idealistic whenever both $E$ and $F$ are Borel and idealistic.)

Suppose that $X=\hat{X}/E$ and $Y=\hat{Y}/F$ are Borel-definable sets, and $%
f:X\rightarrow Y$ is a function. A \emph{lift} $\hat{f}$ of $f$ is a
function $\hat{f}:\hat{X}\rightarrow \hat{Y}$ such that $f\left( \left[ x%
\right] _{E}\right) =[\hat{f}\left( x\right) ]_{F}$ for every $x\in X$. In
this case, we also say that $f$ is induced by $\hat{f}$.

\begin{definition}
\label{Definition:Borel-definable}Suppose that $X=\hat{X}/E$ and $Y=\hat{Y}%
/F $ are Borel-definable sets, and $f:X\rightarrow Y$ is a function. Then $f$
is \emph{Borel-definable }if it admits a Borel lift $\hat{f}:\hat{X}%
\rightarrow \hat{Y}$.
\end{definition}

By the version of the selection theorem \cite[Theorem 18.6]%
{kechris_classical_1995} presented in the proof of \cite[Lemma 3.7]%
{kechris_borel_2016}, the function $f:X\rightarrow Y$ is Borel-definable if
and only if its graph is a Borel subset of $X\times Y$.

The category of Borel-definable sets has Borel-definable functions as
morphisms. This category contains the category of standard Borel spaces and
Borel functions as a full subcategory, and it satisfies natural
generalizations of several good properties of the latter. We recall here the
most salient ones; see \cite[Proposition 1.10]{lupini_definable_2020} and
references therein.

\begin{proposition}[Kechris--Macdonald \protect\cite{kechris_borel_2016}]
\label{Proposition:Kechris--Macdonald}Suppose that $X$ and $Y$ are
Borel-definable sets. If $f:X\rightarrow Y$ is a Borel-definable injection,
and $A\subseteq X$ is Borel, then $f\left( A\right) \subseteq Y$ is Borel,
and the inverse function $f^{-1}:f\left( A\right) \rightarrow Y$ is
Borel-definable.
\end{proposition}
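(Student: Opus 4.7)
I fix a Borel lift $\hat{f}:\hat{X}\to\hat{Y}$ of $f$, writing $X=\hat{X}/E$ and $Y=\hat{Y}/F$. The injectivity of $f$ translates to the hypothesis $\hat{f}(x)\,F\,\hat{f}(x')$ iff $x\,E\,x'$. Given a Borel $A\subseteq X$, I set $\hat{A}:=\{x\in\hat{X}:[x]_{E}\in A\}$, which is an $E$-invariant Borel subset of $\hat{X}$. I must establish two things: (i) the $F$-saturation $\hat{B}:=[\hat{f}(\hat{A})]_{F}=\{y\in\hat{Y}:\exists x\in\hat{A},\ \hat{f}(x)\,F\,y\}$ is Borel in $\hat{Y}$, and (ii) there is a Borel map $\hat{g}:\hat{B}\to\hat{X}$ with $\hat{f}(\hat{g}(y))\,F\,y$ for every $y\in\hat{B}$, so that $\hat{g}$ serves as a Borel lift of $f^{-1}|_{f(A)}$.

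\textbf{Key construction.} Consider the Borel relation $R:=\{(y,x)\in\hat{Y}\times\hat{X}:\hat{f}(x)\,F\,y\}$. By the injectivity hypothesis, each vertical section $R^{y}$ is either empty or a single $E$-class, so $[\hat{f}(\hat{X})]_{F}=\mathrm{proj}_{\hat{Y}}(R)$, which is a priori only analytic. The plan is to combine the classical Lusin--Souslin theorem with the idealistic structure on $E$. Using the Borel selector $s_{E}:\hat{X}\to\hat{X}$ and the $\sigma$-filter assignment $C\mapsto\mathcal{F}_{C}$ from the idealistic structure, I would define a Borel-definable ``generic'' member of each $E$-class and use the characteristic axiom of idealistic equivalence relations --- that $\{x:\{x'\in[x]_{E}:(s_{E}(x),x')\in U\}\in\mathcal{F}_{[x]_{E}}\}$ is Borel whenever $U$ is Borel --- applied to $U:=\{(x,x'):\hat{f}(x')\,F\,y_{0}\}$ parameterized in $y_{0}$, to rewrite ``$y_{0}\in\hat{B}$'' as a Borel predicate in $y_{0}$. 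A parallel Jankov--von Neumann uniformization of $R$ over $\hat{B}$, followed by an analogous ``regularization'' against the filters $\mathcal{F}_{C}$, then produces the Borel section $\hat{g}$.

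\textbf{Main obstacle.} The crux is the Borelness of the saturation $\hat{B}$. For a genuinely injective Borel map between standard Borel spaces this is Lusin--Souslin, but here $\hat{f}$ need not be injective on $\hat{X}$ itself; only the induced map on $E$-classes is. The idealistic hypothesis is precisely what bridges this gap, because the $\sigma$-filters $\mathcal{F}_{C}$ serve as a Borel-definable surrogate for a Borel transversal, which need not exist. The delicate part of the argument is to verify that the filter-based quantifier ``for $\mathcal{F}_{[x]_{E}}$-many $x$'' commutes appropriately with the existential ``$\exists x\in\hat{A}$'' that defines $\hat{B}$; once this is done, Borelness of $\hat{B}$ follows and the Borel-definability of $f^{-1}$ reduces to producing a Borel uniformization of $R$, which the same idealistic machinery delivers.
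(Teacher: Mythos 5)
The paper does not prove this proposition at all: it is quoted verbatim from Kechris--Macdonald \cite{kechris_borel_2016} (via \cite[Proposition 1.10]{lupini_definable_2020}), so there is no in-paper argument to compare yours against. Judged on its own terms, your submission is a strategy outline rather than a proof. Your setup is sound --- the lift $\hat{f}$, the observation that injectivity of $f$ makes each section $R^{y}=\{x:\hat{f}(x)\,F\,y\}$ empty or a single $E$-class, and the identification of the two targets (Borelness of the $F$-saturation $\hat{B}$ and a Borel uniformization of $R$) --- but both targets are then deferred with phrases like ``I would define'', ``once this is done'', and ``the delicate part of the argument is to verify''. That delicate part \emph{is} the theorem; nothing in the write-up actually establishes it.

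Two concrete technical problems would also need repair even if you fleshed out the plan. First, Jankov--von Neumann does not produce a Borel uniformization of $R$; it produces only a $\sigma(\boldsymbol{\Sigma}^{1}_{1})$-measurable one, which is not enough to conclude that $f^{-1}$ is Borel-definable. The correct tool is the large-section uniformization theorem (\cite[Theorem 18.6]{kechris_classical_1995}, the one the paper itself invokes elsewhere), applied to the $\sigma$-ideals dual to the filters $\mathcal{F}_{C}$; this is where the idealistic hypothesis genuinely enters, and it simultaneously yields Borelness of $\mathrm{proj}_{\hat{Y}}(R)$ and the Borel section $\hat{g}$, rather than these being two separate steps. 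Second, your intended application of the idealistic axiom to $U:=\{(x,x'):\hat{f}(x')\,F\,y_{0}\}$ ``parameterized in $y_{0}$'' is not licensed by the definition as stated: the axiom guarantees Borelness of $A_{s,\mathcal{F}}\subseteq\hat{X}$ for Borel $A\subseteq\hat{X}\times\hat{X}$, with no external parameter. One needs the parameterized (``Borel-on-Borel'') version, which can be obtained, e.g., by passing to the product relation $E\times\Delta_{\hat{Y}}$ on $\hat{X}\times\hat{Y}$ (the paper notes products of idealistic relations are idealistic) --- but this reduction has to be carried out explicitly, and it is precisely the verification you label as the main obstacle and then omit.
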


\begin{proposition}[Motto Ros \protect\cite{motto_ros_complexity_2012}]
\label{Proposition:Motto Ros}Suppose that $X$ and $Y$ are Borel-definable
sets. If there exist a Borel-definable injection $X\rightarrow Y$ and a
Borel-definable injection $Y\rightarrow X$, then there exists a
Borel-definable bijection $X\leftrightarrow Y$.
\end{proposition}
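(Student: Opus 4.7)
The plan is to mimic the classical Cantor--Schr\"{o}der--Bernstein construction in the Borel-definable setting, with Proposition \ref{Proposition:Kechris--Macdonald} playing the role that the Lusin--Souslin theorem plays in the Borel category. Concretely, let $f:X\rightarrow Y$ and $g:Y\rightarrow X$ be the given Borel-definable injections. By Proposition \ref{Proposition:Kechris--Macdonald}, the image $g(Y)\subseteq X$ is Borel and the partial inverse $g^{-1}:g(Y)\rightarrow Y$ is itself Borel-definable; similarly $f$ sends Borel subsets of $X$ to Borel subsets of $Y$.

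I would then carry out the standard back-and-forth. Set $A_{0}:=X\setminus g(Y)$, define recursively $A_{n+1}:=g(f(A_{n}))$, and let
\begin{equation*}
A:=\bigcup_{n\in \mathbb{N}}A_{n}\subseteq X.
\end{equation*}
By induction on $n$, the set $A_{n}$ is a Borel subset of $X$: at each step $f(A_{n})$ is Borel in $Y$ and $g(f(A_{n}))$ is Borel in $X$, by Proposition \ref{Proposition:Kechris--Macdonald}. Hence $A$ and its complement $X\setminus A$ are Borel subsets of the Borel-definable set $X$. Define $h:X\rightarrow Y$ by declaring $h(x)=f(x)$ whenever $x\in A$ and $h(x)=g^{-1}(x)$ whenever $x\notin A$; the definition is legitimate because $X\setminus A\subseteq X\setminus A_{0}=g(Y)$. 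A purely set-theoretic computation, identical to the classical proof, shows that $h$ is a bijection.

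It remains to verify that $h$ is Borel-definable. Writing $X=\hat{X}/E$ and $Y=\hat{Y}/F$, fix Borel lifts $\hat{f}:\hat{X}\rightarrow \hat{Y}$ of $f$ and $\hat{\gamma}:\widehat{g(Y)}\rightarrow \hat{Y}$ of $g^{-1}$, where $\widehat{g(Y)}=\{x\in \hat{X}:[x]_{E}\in g(Y)\}$ is a Borel subset of $\hat{X}$. Letting $\hat{A}:=\{x\in \hat{X}:[x]_{E}\in A\}$, which is Borel because $A$ is Borel in $X$, define
\begin{equation*}
\hat{h}(x):=\begin{cases}\hat{f}(x) & \text{if }x\in \hat{A},\\ \hat{\gamma}(x) & \text{if }x\in \hat{X}\setminus \hat{A},\end{cases}
\end{equation*}
noting that $\hat{X}\setminus \hat{A}\subseteq \widehat{g(Y)}$ so $\hat{\gamma}(x)$ is defined there. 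Then $\hat{h}$ is Borel, and a straightforward case distinction on whether $[x]_{E}\in A$ shows that $[\hat{h}(x)]_{F}=h([x]_{E})$, so $\hat{h}$ is a lift of $h$ and $h$ is Borel-definable.

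I do not foresee any genuine obstacle: the only non-elementary input is Proposition \ref{Proposition:Kechris--Macdonald}, which supplies both the Borelness of forward images under Borel-definable injections and the Borel-definability of their partial inverses; once these are in hand, the classical construction transports verbatim. If anything, the minor subtlety to check carefully is that $\hat{\gamma}$ is defined everywhere on $\hat{X}\setminus \hat{A}$ (so that $\hat{h}$ is a total Borel function), which is ensured by the defining inclusion $A_{0}=X\setminus g(Y)\subseteq A$.
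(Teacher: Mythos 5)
Your argument is correct: the paper does not prove this proposition itself but cites it to Motto Ros, and your construction---the classical Cantor--Schr\"{o}der--Bernstein back-and-forth with Proposition \ref{Proposition:Kechris--Macdonald} supplying both the Borelness of images of Borel sets under Borel-definable injections and the Borel-definability of the partial inverse $g^{-1}$---is exactly the standard definable argument underlying the cited result. The gluing of the two Borel lifts along the Borel set $\hat{A}$ is the right way to verify Borel-definability of $h$, and your final check that $\hat{\gamma}$ is totally defined on $\hat{X}\setminus \hat{A}$ (via $X\setminus A\subseteq g(Y)$) closes the only delicate point.
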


If $X=\hat{X}/E$ and $Y=\hat{Y}/F$ are Borel-definable sets, then the
Borel-definable set $X\times Y$ as defined above is the product of $X$ and $%
Y $ in the category of Borel-definable sets.

More generally, one can consider sets that are presented as $X=\hat{X}/E$
where $\hat{X}$ is a Polish space and $E$ is a analytic equivalence relation
on $\hat{X}$ that is not necessarily Borel or idealistic. In this case, we
say that $X$ is a $\boldsymbol{\Sigma }_{1}^{1}$\emph{-definable set}. The
notions of Borel and analytic subset of a $\boldsymbol{\Sigma }_{1}^{1}$%
-definable set, and of Borel-definable function between $\boldsymbol{\Sigma }%
_{1}^{1}$-definable sets, can be formulated as in the case of
Borel-definable sets. The category of $\boldsymbol{\Sigma }_{1}^{1}$%
-definable sets has $\boldsymbol{\Sigma }_{1}^{1}$-definable functions
(which are the functions with analytic graph) as morphisms. The following
result is established in \cite[Corollary 1.14]{lupini_definable_2020-1}.

\begin{lemma}
\label{Lemma:Borel-definable-set}Suppose that $X$ is a Borel-definable set
and $Y$ is a $\boldsymbol{\Sigma }_{1}^{1}$-definable set. If there exists a
Borel-definable bijection $f:X\rightarrow Y$, then $Y$ is Borel-definable.
\end{lemma}

A \emph{Borel-definable group} is simply a group object in the category of
Borel-definable sets in the sense of \cite[Section III.6]%
{mac_lane_categories_1998}. Explicitly, a Borel-definable group is a
Borel-definable set $G$ that is also a group, and such that the group
operation $G\times G\rightarrow G$ and the function $G\rightarrow G$ mapping
each element to its inverse are Borel-definable. Notice that every standard
Borel group is, in particular, a Borel-definable group. Naturally, a $%
\boldsymbol{\Sigma }_{1}^{1}$-definable group is a group object in the
category of $\boldsymbol{\Sigma }_{1}^{1}$-definable sets.

An important example of $\boldsymbol{\Sigma }_{1}^{1}$-definable group that
is not a Borel-definable group is $\mathbb{R}^{\omega }/E_{1}$, where $%
\mathbb{R}^{\omega }$ is the product of countably many copies of the Polish
group $\mathbb{R}$, and $E_{1}$ is the tail-equivalence relation on $\mathbb{%
R}^{\omega }$, obtained by setting $\left( \boldsymbol{x}_{i}\right)
E_{1}\left( \boldsymbol{y}_{i}\right) \Leftrightarrow \exists n\forall i\geq
n,$ $x_{i}=y_{i}$. Notice that, if $\mathbb{R}^{\left( \omega \right)
}\subseteq \mathbb{R}^{\omega }$ is the subgroup consisting of sequences
that are eventually zero, then $E_{1}$ is the coset equivalence relation
associated with $\mathbb{R}^{\left( \omega \right) }$. Thus, $\mathbb{R}%
^{\omega }/E_{1}$ can be seen as the quotient group $\mathbb{R}^{\omega }/%
\mathbb{R}^{\left( \omega \right) }$. It is proved in \cite[Theorem 4.1]%
{kechris_classification_1997} that if $X$ is a Borel-definable set, then
there is no Borel-definable injection $\mathbb{R}^{\omega }/\mathbb{R}%
^{\left( \omega \right) }\rightarrow X$.

\subsection{Groups with a Polish cover}

We now recall the notion of abelian group with a Polish cover introduced in 
\cite{bergfalk_definable_2020}.

\begin{definition}
\label{Definition:Polish cover}An abelian group with a Polish cover is a
Borel-definable abelian group given as a quotient $\hat{G}/N$ where $\hat{G}$
is an abelian Polish group and $N\subseteq \hat{G}$ is a Polishable
subgroup. This means that $N$ is a Borel subgroup of $\hat{G}$ such that
there is a Polish group topology on $N$ whose open sets are Borel in $\hat{G}
$ or, equivalently, there exist a Polish group $H$ and a continuous
homomorphism $\psi :H\rightarrow \hat{G}$ with image $N$. For $x,y\in \hat{G}
$, we write $x\equiv y\mathrm{\ \mathrm{mod}}\ N$ if $x-y\in N$.
\end{definition}

We regard an abelian Polish group $G$ as an abelian group with a Polish
cover $\hat{G}/N$ where $G=\hat{G}$ and $N=\left\{ 0\right\} $. If $G=\hat{G}%
/N$ and $H=\hat{H}/M$ are abelian groups with a Polish cover, then we define 
$G\oplus H$ to be the abelian group with a Polish cover $(\hat{G}\oplus \hat{%
H})/(N\oplus M)$. Similarly, if $G_{k}=\hat{G}_{k}/N_{k}$ is an abelian
group with a Polish cover for $k\in \omega $, then we define $\prod_{k\in
\omega }G_{k}$ to be the abelian group with a Polish cover $\hat{G}/N$ where 
$\hat{G}=\prod_{k}\hat{G}_{k}$ and $N=\prod_{k}N_{k}$.

Recall that a Polish group is \emph{non-Archimedean} if it has a basis of
neighborhoods of the identity consisting of \emph{open subgroups}; see \cite[%
Proposition 2.1]{ding_non-archimedean_2017} for other characterizations. A
Polish group is \emph{locally compact }if it has a basis of neighborhoods of
the identity consisting of \emph{compact} subsets.

\begin{definition}
\label{Definition:non-A}Suppose that $G=\hat{G}/N$ is an abelian group with
a Polish cover. Then we say that $G$ is:

\begin{itemize}
\item an abelian group with a non-Archimedean Polish cover if $\hat{G}$ and $%
N$ are non-Archimedean Polish groups;

\item an abelian group with a locally compact Polish cover if $\hat{G}$ and $%
N$ are locally compact Polish groups.
\end{itemize}
\end{definition}

As an abelian group with a Polish cover is, in particular, a Borel-definable
group, the notion of Borel-definable homomorphism between abelian groups
with a Polish cover is a particular instance of the notion of
Borel-definable group homomorphism between Borel-definable groups.

\begin{definition}
\label{Definition:Borel-definable homomorphism}Suppose that $G=\hat{G}/N$
and $H=\hat{H}/M$ are abelian groups with a Polish cover. A group
homomorphism $f:G\rightarrow H$ is:

\begin{itemize}
\item \emph{Borel-definable} if it has a Borel lift $\hat{f}:\hat{G}%
\rightarrow \hat{H}$;

\item \emph{Baire-definable }if it has a Baire-measurable lift $\hat{f}:\hat{%
G}\rightarrow \hat{H}$ \cite[Definition 8.37]{kechris_classical_1995};

\item \emph{continuously definable }if it has a continuous lift $\hat{f}:%
\hat{G}\rightarrow \hat{H}$;

\item \emph{locally continuously definable }if it has a Borel lift $\hat{f}:%
\hat{G}\rightarrow \hat{H}$ that is \emph{locally continuous}, namely
continuous on a zero neighborhood in $\hat{G}$;

\item \emph{liftable }if it has a lift $\hat{f}:\hat{G}\rightarrow \hat{H}$
that is a continuous group homomorphism.
\end{itemize}

If $G$ is an abelian group with a locally compact Polish cover, then we say
that $f:G\rightarrow H$ is:

\begin{itemize}
\item \emph{Haar-definable} if it has a Haar-measurable lift $\hat{f}:\hat{G}%
\rightarrow \hat{H}$.
\end{itemize}
\end{definition}

One can analogously define the notions from Definition \ref%
{Definition:Borel-definable homomorphism} in the more general context of $%
\boldsymbol{\Sigma }_{1}^{1}$-definable groups.

\begin{definition}
\label{Definition:cocycle}Suppose that $G=\hat{G}/N$ and $H=\hat{H}/M$ are
abelian groups with a Polish cover. Let $f:\hat{G}\rightarrow \hat{H}$ be
lift of a group homomorphism $G\rightarrow H$. We let $\delta f:\hat{G}%
\times \hat{G}\rightarrow M$ be the corresponding $2$-cocycle, defined by $%
\delta f\left( x,y\right) =f\left( y\right) -f\left( x+y\right) +f\left(
x\right) $.
\end{definition}

\begin{remark}
\label{Remark:inverse}It follows from Proposition \ref%
{Proposition:Kechris--Macdonald} that if a Borel-definable homomorphism $%
G\rightarrow H$ is bijective, then its inverse $H\rightarrow G$ is also a
Borel-definable homomorphism.
\end{remark}

In what follows, we consider groups with a Polish cover as objects of a
category that has Borel-definable homomorphisms as morphisms.

\begin{remark}
\label{Remark:automatic-continuity}It follows from \cite[Proposition 4.6]%
{bergfalk_definable_2020} that, \emph{when }$\hat{G}$\emph{\ is
non-Archimedean}, a group homomorphism $\varphi :\hat{G}/N\rightarrow \hat{H}%
/M$ between abelian groups with a Polish cover is Borel-definable if and
only if it is continuously definable.
\end{remark}

\begin{lemma}
\label{Lemma:locally-continuously}Let $f:\hat{G}/N\rightarrow \hat{H}/M$ be
a group homomorphism between abelian groups with a Polish cover. Let $V$ be
a zero neighborhood in $\hat{G}$ and let $\hat{f}:V\rightarrow \hat{H}$ be a
continuous function such that $\hat{f}\left( x\right) +M=f\left( x+N\right) $
for every $x\in V$. Then there exists a Borel lift for $f$ whose restriction
to $V$ is equal to $\hat{f}$.
\end{lemma}

\begin{proof}
Let $\left\{ a_{n}:n\in \omega \right\} $ be a countable dense subset of $%
\hat{G}$ with $a_{0}=0$. For $n\in \omega $, let $b_{n}\in \hat{H}$ be such
that $f\left( a_{n}+N\right) =b_{n}+M$, where $b_{0}=0$. By \cite[Theorem
18.10]{kechris_classical_1995} there exists a Borel function $\hat{G}%
\rightarrow \omega $, $x\mapsto n\left( x\right) $ such that $x\in
V+a_{n\left( x\right) }$ for every $x\in \hat{G}$ and $n\left( x\right) =0$
for $x\in V$. We can thus extend $\hat{f}$ to a Borel function on $\hat{G}$
by setting $\hat{f}\left( x\right) :=\hat{f}\left( x-a_{n\left( x\right)
}\right) +b_{n\left( x\right) }$ for $x\in \hat{X}$.
\end{proof}

\section{Subgroups of groups with a Polish cover\label{Section:subgroups}}

\subsection{Subgroups with a Polish cover\label{Subsection:subgroups}}

We now introduce in the context of abelian groups with a Polish cover the
notion of Borel subgroup and subgroup with a Polish cover.

\begin{definition}
\label{Definition:Polishable}Suppose that $G=\hat{G}/N$ is an abelian group
with a Polish cover, and $H\subseteq G$ is a subgroup. Define $\hat{H}%
=\{x\in \hat{G}:x+N\in H\}\subseteq \hat{G}$. Then we say that:

\begin{itemize}
\item $H$ is a\emph{\ Borel }(respectively, \emph{analytic}) \emph{subgroup}
of $G$ if $\hat{H}$ is a Borel (respectively, analytic) subgroup of $\hat{G}$%
;

\item $H$ is a \emph{subgroup with a Polish cover} of $G$ if $\hat{H}$ is a
Polishable subgroup of $\hat{G}$;

\item $H$ is a \emph{subgroup with a non-Archimedean Polish cover} of $G$ if 
$\hat{H}$ is a non-Archimedean Polishable subgroup of $\hat{G}$.
\end{itemize}
\end{definition}

If $H=\hat{H}/N$ is a subgroup with a Polish cover of an abelian group with
a Polish cover $G=\hat{G}/N$, where $\hat{H}$ is a Polishable subgroup of $%
\hat{G}$, then we regard $H$ as the abelian group with a Polish cover $\hat{H%
}/N$, and $G/H$ as the abelian group with a Polish cover $\hat{G}/\hat{H}$.

If $G=\hat{G}/N$ is group with a Polish cover and $H$ is a subgroup with a
Polish cover of $G$, then we let $\overline{H}^{G}$ be the closed subgroup
of $G$ obtained as the closure of $H$ in $G$ with respect to the quotient
topology induced by $\hat{G}$. We say that $H$ is dense in $G$ if $\overline{%
H}^{G}=G$.

\begin{lemma}
\label{Lemma:intersection--Polishable}Suppose that $G$ is an abelian group
with a Polish cover. Let $\left( G_{n}\right) _{n\in \omega }$ be a sequence
of subgroups with a Polish cover of $G$. Then $G_{0}+G_{1}$ and $%
\bigcap_{n\in \omega }G_{n}$ are subgroups with a Polish cover of $G$.
\end{lemma}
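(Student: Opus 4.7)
The plan is to handle all three cases by a common strategy: for each of the subgroups listed, I would exhibit a continuous group homomorphism $\phi : P \to \hat{G}$ from a Polish group $P$ whose image in $\hat{G}$ coincides with the preimage of the stated subgroup of $G$ under the projection $\hat{G} \to G$. Once this is done, the image is automatically Polishable in $\hat{G}$, either via the equivalent second formulation recalled in Definition \ref{Definition:Polish cover}, or by factoring $\phi$ through $P/\ker\phi$ to obtain a continuous injective homomorphism from a Polish group into $\hat{G}$ and applying Proposition \ref{Proposition:Kechris--Macdonald}.

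To set this up, I would begin by fixing, for each $n$, a Polish group $H_n$ and a continuous homomorphism $\psi_n : H_n \to \hat{G}$ with image $\hat{G}_n := \{x \in \hat{G} : x + N \in G_n\}$, together with a Polish topology on $N$ making the inclusion $\iota : N \to \hat{G}$ continuous. Note that $N \subseteq \hat{G}_n$ for every $n$, since $0 \in G_n$. A quick unwinding of definitions then shows that the preimage in $\hat{G}$ of $kG_0$ is $k\hat{G}_0 + N$, of $G_0 + G_1$ is $\hat{G}_0 + \hat{G}_1$, and of $\bigcap_n G_n$ is $\bigcap_n \hat{G}_n$.

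For $G_0 + G_1$ I would take the continuous homomorphism
\begin{equation*}
H_0 \oplus H_1 \longrightarrow \hat{G}, \qquad (a, b) \longmapsto \psi_0(a) + \psi_1(b),
\end{equation*}
which has image $\hat{G}_0 + \hat{G}_1$. For $kG_0$ I would use
\begin{equation*}
H_0 \oplus N \longrightarrow \hat{G}, \qquad (a, n) \longmapsto k\,\psi_0(a) + \iota(n),
\end{equation*}
whose image is $k\hat{G}_0 + N$. Both domains are Polish groups and both maps are continuous as composites of continuous operations in $\hat{G}$.

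The intersection requires one additional idea. I would form the Polish group
\begin{equation*}
P := \bigl\{ (a_n) \in \textstyle\prod_{n \in \omega} H_n : \psi_n(a_n) = \psi_m(a_m)\ \text{for all}\ n, m \in \omega \bigr\},
\end{equation*}
which is a closed subgroup of the Polish group $\prod_n H_n$ because $\hat{G}$ is Hausdorff and each $\psi_n$ is continuous (so each pairwise equalizer is closed, and the intersection of countably many closed sets is closed). The continuous homomorphism $P \to \hat{G}$ sending $(a_n)$ to $\psi_0(a_0)$ then has image exactly $\bigcap_n \hat{G}_n$. I expect the only mildly delicate point of the whole proof to be this fiber-product-style construction for the countable intersection; the sum and scalar-multiplication cases are essentially formal once the correct source Polish group is chosen.
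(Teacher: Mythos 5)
Your proposal is correct and follows essentially the same route as the paper: each of the three preimage subgroups of $\hat{G}$ is exhibited as the image of a continuous homomorphism from a Polish group, using a direct sum for $kG_0$ and $G_0+G_1$ and a fiber-product (equalizer) construction inside a countable product for the intersection. The only cosmetic difference is that you work with auxiliary covers $\psi_n:H_n\to\hat{G}$ while the paper uses the Polish group topologies on the subgroups $\hat{G}_n$ themselves; these are interchangeable by the equivalence recalled in Definition \ref{Definition:Polish cover}.
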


\begin{proof}
Write $G=\hat{G}/N$. For every $n\in \omega $, we have that $G_{n}=\hat{G}%
_{n}/N$ for some Polishable subgroup $\hat{G}_{n}$ of $\hat{G}$. We have
that 
\begin{equation*}
\{x\in \hat{G}:x+N\in G_{0}+G_{1}\}=\hat{G}_{0}+\hat{G}_{1}+N
\end{equation*}%
is the image of the Polish group $\hat{G}_{0}\oplus \hat{G}_{1}\oplus N$
under the continuous homomorphism $\hat{G}_{0}\oplus \hat{G}_{1}\oplus
N\rightarrow \hat{G}$, $\left( x,y,z\right) \mapsto x+y+z$.

Similarly, we have that 
\begin{equation*}
\{x\in \hat{G}:x+N\in \bigcap_{n\in \omega }G_{n}\}=\bigcap_{n\in \omega }%
\hat{G}_{n}
\end{equation*}%
is the image of the Polish group%
\begin{equation*}
Z:=\left\{ \left( x_{n}\right) _{n\in \omega }\in \prod_{n\in \omega }\hat{G}%
_{n}:\forall n\in \omega ,x_{n}=x_{n+1}\right\} \subseteq \prod_{n\in \omega
}\hat{G}_{n}
\end{equation*}%
under the continuous homomorphism $Z\rightarrow \hat{G}$, $\left(
x_{n}\right) _{n\in \omega }\mapsto x_{0}$.
\end{proof}

Suppose that $L$ is a Borel subgroup of an abelian group with a Polish cover 
$G$. Then one can consider the quotient $G/L$ as a $\boldsymbol{\Sigma }%
_{1}^{1}$-definable group. The implication (1)$\Rightarrow $(3) in the
following proposition can be seen as a reformulation of \cite[Theorem 1.1]%
{solecki_coset_2009}.

\begin{proposition}
\label{Proposition:E1}Suppose that $L$ is a Borel subgroup of an abelian
group with a Polish cover $G$. Consider the corresponding $\boldsymbol{%
\Sigma }_{1}^{1}$-definable group $G/L$. The following assertions are
equivalent:

\begin{enumerate}
\item there does not exist a Borel-definable injection $\mathbb{R}^{\omega }/%
\mathbb{R}^{\left( \omega \right) }\rightarrow G/L$;

\item $G/L$ is a Borel-definable group;

\item $L$ is a subgroup with a Polish cover of $G$.
\end{enumerate}
\end{proposition}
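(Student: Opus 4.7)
The plan is to prove the cycle of implications (3)$\Rightarrow$(2)$\Rightarrow$(1)$\Rightarrow$(3). Writing $G=\hat{G}/N$ and setting $\hat{L}:=\{x\in\hat{G}:x+N\in L\}$, the hypothesis that $L$ is a Borel subgroup of $G$ means that $\hat{L}$ is a Borel subgroup of $\hat{G}$ containing $N$, so that the Borel-semidefinable quotient $G/L$ is canonically identified with $\hat{G}/\hat{L}$.

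Two of the three implications are bookkeeping. For (3)$\Rightarrow$(2), if $\hat{L}$ is Polishable in $\hat{G}$, then its coset equivalence relation on $\hat{G}$ is Borel and idealistic (as the orbit equivalence relation of a continuous Polish group action), so $\hat{G}/\hat{L}=G/L$ is a Borel-definable set; being a group object, it is a Borel-definable group. For (2)$\Rightarrow$(1), the non-existence of a Borel-definable injection $\mathbb{R}^{\omega}/\mathbb{R}^{(\omega)}\rightarrow X$ for any Borel-definable set $X$ is exactly \cite[Theorem 4.1]{kechris_classification_1997}, which is recalled in the excerpt.

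The main content is (1)$\Rightarrow$(3). Here I plan to invoke Solecki's dichotomy for coset equivalence relations of Borel subgroups of Polish groups \cite[Theorem 1.1]{solecki_coset_2009}, applied to $\hat{L}\subseteq \hat{G}$: either $\hat{L}$ is Polishable in $\hat{G}$, or $E_{1}$ continuously reduces to the coset equivalence relation $E_{\hat{L}}^{\hat{G}}$. In the latter case, the reducing Borel function factors through the quotients to yield a Borel-definable injection $\mathbb{R}^{\omega}/\mathbb{R}^{(\omega)}\rightarrow \hat{G}/\hat{L}=G/L$, contradicting (1). Hence $\hat{L}$ is Polishable in $\hat{G}$, which by Definition \ref{Definition:Polishable} means exactly that $L$ is Polishable in $G$; and then the presentation $G/L=\hat{G}/\hat{L}$ exhibits $G/L$ as an abelian group with a Polish cover.

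The only nontrivial ingredient is Solecki's theorem, which carries the whole weight of the argument. The main conceptual obstacle is checking that the dichotomy, formulated on the level of Borel reducibility between equivalence relations, descends to the statement about Borel-definable injections between the corresponding quotients in the sense of Definition \ref{Definition:Borel-definable homomorphism}; this is a routine passage to the quotient but should be pointed out explicitly to complete the proof.
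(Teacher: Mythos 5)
Your proposal is correct and follows essentially the same route as the paper: reduce to the Polish group case via $\hat{L}$, get (3)$\Rightarrow$(2) from the idealisticity of orbit equivalence relations of continuous Polish group actions, (2)$\Rightarrow$(1) from Kechris--Louveau, and (1)$\Rightarrow$(3) from Solecki's dichotomy for coset equivalence relations of Borel subgroups. The one point you flag as needing care---that the Borel reduction of $E_{1}$ descends to a Borel-definable injection on quotients---is exactly the ``reformulation'' the paper alludes to when citing Solecki's theorem, so your treatment matches the intended argument.
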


\begin{proof}
Write $G=\hat{G}/N$ and let $\hat{L}=\{x\in \hat{G}:x+N\in L\}$. Since $G/L=%
\hat{G}/\hat{L}$, after replacing $L$ with $\hat{L}$ and $G$ with $\hat{G}$,
we can assume that $G$ is in fact a Polish group.

The implication (3)$\Rightarrow $(2) follows from the fact that a group with
a Polish cover is, in particular, a Borel-definable group in view of \cite[%
Proposition 5.4.10]{gao_invariant_2009}. The implication (2)$\Rightarrow $%
(1) follows from \cite[Theorem 4.1]{kechris_classification_1997}. Finally,
the implication (1)$\Rightarrow $(3) is the content of \cite[Theorem 1.1]%
{solecki_coset_2009}.
\end{proof}

We now show that images and preimages of subgroups with a Polish cover under
Borel-definable homomorphisms are subgroups with a Polish cover.

\begin{proposition}
\label{Proposition:preimage}Suppose that $G,H$ are abelian groups with a
Polish cover, and $f:G\rightarrow H$ is a Borel-definable group homomorphism.

\begin{enumerate}
\item If $H_{0}$ is a subgroup with a Polish cover of $H$, then $%
f^{-1}\left( H_{0}\right) $ is a subgroup with a Polish cover of $G$;

\item If $G_{0}$ is a subgroup with a Polish cover of $G$, then $f\left(
G_{0}\right) $ is a subgroup with a Polish cover of $G$.
\end{enumerate}
\end{proposition}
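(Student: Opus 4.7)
The plan is to deduce (1) directly from Proposition \ref{Proposition:E1}, and then to bootstrap (2) from (1) by passing to a ``graph'' Polishable subgroup built from a Borel lift of $f$. For (1), I would first observe that $f^{-1}(H_{0})$ is a Borel subgroup of $G$, since it lifts to the Borel set $\hat{f}^{-1}(\hat{H}_{0})\subseteq \hat{G}$, where $\hat{f}$ is any Borel lift of $f$. The homomorphism $f$ then descends to an injective Borel-definable homomorphism $\bar{f}:G/f^{-1}(H_{0})\hookrightarrow H/H_{0}$. Since $H_{0}$ is Polishable, $H/H_{0}$ is a group with a Polish cover, and hence by Proposition \ref{Proposition:E1} admits no Borel-definable injection from $\mathbb{R}^{\omega}/\mathbb{R}^{(\omega)}$. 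Composing any putative such injection into $G/f^{-1}(H_{0})$ with $\bar{f}$ would produce one into $H/H_{0}$, a contradiction; a second application of Proposition \ref{Proposition:E1} then yields that $f^{-1}(H_{0})$ is Polishable in $G$.

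For (2), I would write $G=\hat{G}/N$, $H=\hat{H}/M$, and $G_{0}=\hat{G}_{0}/N$ with $\hat{G}_{0}$ a Polishable subgroup of $\hat{G}$, and fix a Borel lift $\hat{f}:\hat{G}\rightarrow \hat{H}$ of $f$. Equipping $\hat{G}_{0}$ with its own Polish group topology, the product $\hat{G}_{0}\oplus \hat{H}$ is a Polish group in which $\{0\}\oplus M$ is Polishable, so
\[
P:=(\hat{G}_{0}\oplus \hat{H})/(\{0\}\oplus M)
\]
is a group with a Polish cover. Using that $f|_{G_{0}}$ is a group homomorphism (so $\hat{f}(x+x')-\hat{f}(x)-\hat{f}(x')\in M$ for all $x,x'\in \hat{G}_{0}$) together with $\hat{f}(0)\in M$, the Borel map $\psi(x,y)=y-\hat{f}(x)$ from $\hat{G}_{0}\oplus \hat{H}$ to $\hat{H}$ descends to a well-defined Borel-definable group homomorphism $\phi :P\rightarrow H$.

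I would next apply part (1) to $\phi$ with the Polishable subgroup $\{0\}\subseteq H$ to deduce that $\ker \phi =\hat{K}/(\{0\}\oplus M)$ is Polishable in $P$, where
\[
\hat{K}=\{(x,y)\in \hat{G}_{0}\oplus \hat{H}:y-\hat{f}(x)\in M\};
\]
equivalently, $\hat{K}$ is Polishable in $\hat{G}_{0}\oplus \hat{H}$. Endowing $\hat{K}$ with its Polish group topology, the continuous second-coordinate projection $\pi_{2}:\hat{G}_{0}\oplus \hat{H}\rightarrow \hat{H}$ restricts to a continuous homomorphism from the Polish group $\hat{K}$ to $\hat{H}$ whose image is precisely $\hat{f}(\hat{G}_{0})+M$. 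This exhibits $\hat{f}(\hat{G}_{0})+M$ as a Polishable subgroup of $\hat{H}$, so $f(G_{0})=(\hat{f}(\hat{G}_{0})+M)/M$ is Polishable in $H$.

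The main technical obstacle is concentrated in (2): the naive candidate $(x,y)\mapsto \hat{f}(x)+y$ already surjects onto $\hat{f}(\hat{G}_{0})+M$ but is only a homomorphism modulo $M$, so a direct continuous-image argument fails. The ``graph'' reformulation $y-\hat{f}(x)\in M$ is designed so that the $2$-cocycle defect $\hat{f}(x+x')-\hat{f}(x)-\hat{f}(x')$ is absorbed into the Polishable subgroup $\{0\}\oplus M$ by which $P$ is the quotient, producing an honest Borel-definable homomorphism $\phi$; part (1) can then be invoked to upgrade the Borelness of $\hat{K}$ to Polishability, after which the continuous projection $\pi_{2}$ delivers the required Polish cover for the image.
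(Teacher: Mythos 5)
Your part (1) is essentially the paper's own argument: both pass to $H/H_{0}$, note that $f^{-1}(H_{0})$ is Borel, obtain an injective Borel-definable map into a group with a Polish cover, and apply Proposition \ref{Proposition:E1} twice. Part (2), however, is correct but takes a genuinely different route. The paper reduces to the case where $f$ is injective and $G=G_{0}$, invokes Proposition \ref{Proposition:Kechris--Macdonald} to see that $f(G)$ is Borel, and then carries out the technical core of the proof: an explicit verification, via category quantifiers in the style of the proof that orbit equivalence relations are idealistic, that the coset relation of the lift of $f(G)$ in $\hat{H}$ is idealistic, so that $H/f(G)$ is a Borel-definable group and Proposition \ref{Proposition:E1} applies. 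You instead realize the image as the second-coordinate projection of the graph-type subgroup $\hat{K}=\{(x,y):y-\hat{f}(x)\in M\}$, which is the kernel of a Borel-definable homomorphism out of $(\hat{G}_{0}\oplus\hat{H})/(\{0\}\oplus M)$ and hence Polishable by part (1); since a Polishable subgroup is by definition the image of a continuous homomorphism from a Polish group, composing with the continuous projection $\pi_{2}$ immediately exhibits $\hat{f}(\hat{G}_{0})+M$ as Polishable in $\hat{H}$. This bypasses the idealisticity computation entirely (the only deep input, Solecki's theorem, is already consumed in part (1) through Proposition \ref{Proposition:E1}) and also avoids the reductions to $G=G_{0}$ and to $f$ injective; it is in effect the graph construction that the paper records only \emph{after} the proposition, turned around to deliver the image case. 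Two small points are worth making explicit but are standard: $\hat{f}|_{\hat{G}_{0}}$ remains Borel when $\hat{G}_{0}$ is retopologized with its Polish group topology, because the finer topology has the same Borel sets; and the canonical Polish topology on the Polishable subgroup $\hat{K}$ makes the inclusion into $\hat{G}_{0}\oplus\hat{H}$, and hence $\pi_{2}|_{\hat{K}}$, continuous. With those remarks added, your argument is a clean and somewhat more economical alternative to the paper's proof of (2).
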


\begin{proof}
(1) After replacing $H$ with $H/H_{0}$, we can assume that $H_{0}=\left\{
0\right\} $, in which case 
\begin{equation*}
f^{-1}\left( H_{0}\right) =\mathrm{ker}\left( f\right) :=\left\{ g\in
G:f\left( g\right) =0\right\} \text{.}
\end{equation*}%
We have that $f$ induces a Borel-definable injective group homomorphism $G/%
\mathrm{ker}\left( f\right) \rightarrow H$. Notice that $\mathrm{ker}\left(
f\right) $ is a Borel subgroup of $G$. Since $H$ is a group with a Polish
cover, we have that there does not exist a Borel-definable injection $%
\mathbb{R}^{\omega }/\mathbb{R}^{\left( \omega \right) }\rightarrow H$ by
Proposition \ref{Proposition:E1}. Thus, there does not exist a
Borel-definable injection $\mathbb{R}^{\omega }/\mathbb{R}^{\left( \omega
\right) }\rightarrow G/\mathrm{ker}\left( f\right) $. Thus, $\mathrm{ker}%
\left( f\right) $ is a subgroup with a Polish cover of $G$ by Proposition %
\ref{Proposition:E1} again.

(2) After replacing $G$ with $G_{0}$ and $f$ with its restriction to $G_{0}$%
, we can assume that $G=G_{0}$. By the first item, $\mathrm{ker}\left(
f\right) $ is a subgroup with a Polish cover of $G$. Thus, after replacing $%
G $ with $G/\mathrm{ker}\left( f\right) $, we can assume that $f$ is
injective. In this case, we have that $f\left( G\right) $ is a Borel
subgroup of the Borel-definable group $H$ by Proposition \ref%
{Proposition:Kechris--Macdonald}. By Proposition \ref{Proposition:E1}, to
conclude the proof it suffices to prove that $H/f\left( G\right) $ is a
Borel-definable group.

Write $G=\hat{G}/N$ and $H=\hat{H}/M$, where $\hat{G},\hat{H}$ are Polish
groups and $N\subseteq \hat{G}$ and $M\subseteq \hat{H}$ are Polishable
subgroups. Suppose that $\varphi :\hat{G}\rightarrow \hat{H}$ is a Borel
lift of $f$. Define the Borel function $\delta \varphi :\hat{G}\times \hat{G}%
\rightarrow M$ as in Definition \ref{Definition:cocycle}. We need to prove
that the equivalence relation $E$ on $\hat{H}$ defined by setting $%
xEy\Leftrightarrow \exists \left( g,h\right) \in \hat{G}\oplus M$, $\varphi
\left( g\right) +h+x=y$ is idealistic. The argument is similar to the one
from \cite[Proposition 5.4.10]{gao_invariant_2009}. We adopt the notation of
category quantifiers as in \cite[Section 16]{kechris_classical_1995}. For $%
x\in \hat{H}$ and $A\subseteq \left[ x\right] _{E}$, we set $A\in \mathcal{F}%
_{\left[ x\right] _{E}}\Leftrightarrow \forall ^{\ast }g\in \hat{G}$, $%
\forall ^{\ast }h\in M$, $\varphi \left( g\right) +h+x\in A$. Observe that $%
\mathcal{F}_{\left[ x\right] }$ does not depend on the choice of the
representative $x$ for the equivalence class $\left[ x\right] _{E}$. Indeed,
if $x_{0}Ex$ then there exists $\left( g_{0},h_{0}\right) \in \hat{G}\times
M $ such that $\varphi \left( g_{0}\right) +h_{0}+x_{0}=x$.\ If $A\in 
\mathcal{F}_{\left[ x\right] }$ then $\forall ^{\ast }g\in \hat{G}$, $%
\forall ^{\ast }h\in M$, $\varphi \left( g\right) +h+x\in A$. We have that%
\begin{eqnarray*}
\varphi \left( g\right) +h+x &=&\varphi \left( g\right) +h+\varphi \left(
g_{0}\right) +h_{0}+x_{0} \\
&=&\varphi \left( g+g_{0}\right) +h+h_{0}+\delta \varphi \left(
g,g_{0}\right) +x_{0}\text{.}
\end{eqnarray*}%
For a fixed $\tilde{g}\in \hat{G}$, if $\forall ^{\ast }h\in M$, $\varphi
\left( \tilde{g}\right) +h+x\in A$ then $\forall ^{\ast }h\in M$, $\varphi
\left( \tilde{g}+g_{0}\right) +h+h_{0}+\delta \varphi \left( \tilde{g}%
,g_{0}\right) +x_{0}\in A$. Since $h_{0}+\delta \varphi \left( \tilde{g}%
,g_{0}\right) \in M$, this implies that $\forall ^{\ast }h\in M$, $\varphi
\left( \tilde{g}+g_{0}\right) +h+x_{0}\in A$. Therefore, we have that $%
\forall ^{\ast }g\in \hat{G}$, $\forall ^{\ast }h\in M$, $\varphi \left(
g+g_{0}\right) +h+x_{0}\in A$ and hence $\forall ^{\ast }g\in \hat{G}$, $%
\forall ^{\ast }h\in M$, $\varphi \left( g\right) +h+x_{0}\in A$. This shows
that $\mathcal{F}_{\left[ x\right] _{E}}$ is well-defined. It is easy to
verify that $\mathcal{F}_{\left[ x\right] _{E}}$ is a $\sigma $-filter on $%
\left[ x\right] _{E}$. It remains to prove that if $A\subseteq \hat{H}\times 
\hat{H}$ is Borel, then 
\begin{equation*}
A_{\mathcal{F}}:=\left\{ x\in \hat{H}:\left\{ y\in \hat{H}:\left( x,y\right)
\in A\right\} \in \mathcal{F}_{\left[ x\right] _{E}}\right\}
\end{equation*}%
is a Borel subset of $\hat{H}$. The argument is the same as in the proof of 
\cite[Theorem 3.3.3]{gao_invariant_2009}. We have that $x\in A_{\mathcal{F}%
}\Leftrightarrow \forall ^{\ast }g\in \hat{G}$, $\forall ^{\ast }h\in M$, $%
\left( x,\varphi \left( g\right) +h+x\right) \in A$. Define the Borel set%
\begin{equation*}
B:=\left\{ \left( g,h,x\right) \in \hat{G}\times M\times \hat{H}:\left(
x,\varphi \left( g\right) +h+x\right) \in A\right\} \text{.}
\end{equation*}%
Then we have that 
\begin{equation*}
x\in A_{\mathcal{F}}\Leftrightarrow \forall ^{\ast }g\in \hat{G},\forall
^{\ast }h\in M,\left( g,h,x\right) \in B\Leftrightarrow \forall ^{\ast
}\left( g,h\right) \in \hat{G}\times M,\left( g,h,x\right) \in B\text{.}
\end{equation*}%
Since $B$ is Borel, this shows that $A_{\mathcal{F}}$ is Borel by \cite[%
Theorem 16.1]{kechris_classical_1995}.
\end{proof}

The following corollary can be seen as a generalization of the existence of
Borel right inverses for surjective continuous group homomorphisms between
Polish groups; see \cite[Theorem 12.17]{kechris_classical_1995}.

\begin{corollary}
Suppose that $G=\hat{G}/N$ and $H=\hat{H}/M$ are abelian groups with a
Polish cover. Let $f:G\rightarrow H$ be a surjective Borel-definable group
homomorphism.\ Then there exists a Borel function $\psi :\hat{H}\rightarrow 
\hat{G}$ such that $f\left( \psi \left( h\right) +N\right) =h+M$ for every $%
h\in \hat{H}$.
\end{corollary}

\begin{proof}
We can write $f$ as the composition%
\begin{equation*}
G\overset{p}{\rightarrow }\frac{G}{\mathrm{ker}\left( f\right) }\overset{%
\bar{f}}{\rightarrow }H
\end{equation*}%
where $p$ is the quotient map and $\bar{f}$ is the Borel-definable group
isomorphism induced by $f$. It thus suffices to prove that the conclusion
holds for $p$ and $\bar{f}$. In the case of $p$ the conclusion is obvious.
In the case of $\bar{f}$, it is a consequence of Proposition \ref%
{Proposition:Kechris--Macdonald}.
\end{proof}

Suppose that $G=\hat{G}/N$ and $H=\hat{H}/M$ are abelian groups with a
Polish cover. Let $\varphi :G\rightarrow H$ be a group homomorphism. Define
the \emph{graph} $\Gamma \left( \varphi \right) $ of $\varphi $ to be the
subgroup%
\begin{equation*}
\{\left( x,y\right) \in G\oplus H:\varphi \left( x\right) =y\}\subseteq
G\oplus H\text{.}
\end{equation*}%
By Proposition \ref{Proposition:preimage}, if $\varphi $ is Borel-definable,
then $\Gamma \left( \varphi \right) $ is a subgroup with a Polish cover of $%
G\oplus H$, being the kernel of the Borel-definable homomorphism $G\oplus
G\rightarrow H$, $\left( x,y\right) \mapsto \varphi \left( x\right) -y$.
When $G$ and $H$ are abelian groups with a non-Archimedean Polish cover, $%
\Gamma \left( \varphi \right) $ is also a abelian group with a
non-Archimedean Polish cover by Theorem \ref{Theorem:left-heart-B2}(1)
below. The function $\pi _{G}:\Gamma \left( \varphi \right) \rightarrow G$, $%
\left( x,y\right) \mapsto x$ is a bijective liftable\emph{\ }group
homomorphism, and the function $\pi _{H}:\Gamma \left( \varphi \right)
\rightarrow H$, $\left( x,y\right) \mapsto y$ is a liftable group
homomorphism; see Definition \ref{Definition:Borel-definable homomorphism}.
Furthermore, we have that $\varphi =\pi _{H}\circ \left( \pi _{G}\right)
^{-1}$.

\begin{theorem}
\label{Theorem:factorization}Suppose that $\varphi :G\rightarrow H$ is a
group homomorphism between abelian groups with a Polish cover. The following
assertions are equivalent:

\begin{enumerate}
\item $\varphi $ is Borel-definable;

\item the graph $\Gamma \left( \varphi \right) $ is a subgroup with a Polish
cover of $G\oplus H$;

\item there exist an abelian group with a Polish cover $L$, a bijective
liftable group homomorphism $\sigma :L\rightarrow G$, and a liftable group
homomorphism $\psi :L\rightarrow H$ such that $\varphi =\psi \circ \sigma
^{-1}$.
\end{enumerate}
\end{theorem}

\begin{proof}
The implication (1)$\Rightarrow $(2) follows from Proposition \ref%
{Proposition:preimage} as observed above.

(2)$\Rightarrow $(3) In order to verify that (3) holds, it suffices to take $%
L$ to be the graph $\Gamma \left( \varphi \right) $ of $\varphi $, $\sigma
:\Gamma \left( \varphi \right) \rightarrow G$ to be the Borel-definable
homomorphism defined by $\left( x,y\right) \mapsto x$, and $\psi :\Gamma
\left( \varphi \right) \rightarrow H$ to be the Borel-definable homomorphism
defined by $\left( x,y\right) \mapsto y$.

(3)$\Rightarrow $(1) This follows by observing that a liftable group
homomorphism is, in particular, Borel-definable, and that the inverse of a
bijective Borel-definable group homomorphism is Borel-definable by Remark %
\ref{Remark:inverse}.
\end{proof}

\begin{proposition}
\label{Theorem:analytic-graph}Let $G=\hat{G}/N$ be an abelian group with a
Polish cover, and let $H=\hat{H}/E_{H}$ be a $\boldsymbol{\Sigma }_{1}^{1}$%
-definable group. Suppose that $\varphi :G\rightarrow H$ is a group
homomorphism. The following assertions are equivalent:

\begin{enumerate}
\item $\varphi $ is Borel-definable;

\item the graph $\Gamma \left( \varphi \right) $ is an analytic subgroup of $%
G\oplus H$;

\item $\varphi $ is Baire-definable.
\end{enumerate}

If furthermore $G$ is an abelian group with a locally compact Polish cover,
then the above conditions are equivalent to:

\begin{enumerate}
\item[(4)] $\varphi $ is Haar-definable.
\end{enumerate}
\end{proposition}

\begin{proof}
Consider the lift%
\begin{equation*}
\hat{\Gamma}\left( \varphi \right) =\{\left( x,y\right) \in \hat{G}\oplus 
\hat{H}:\varphi \left( x+N\right) E_{H}y\}\subseteq \hat{G}\oplus \hat{H}
\end{equation*}%
of $\Gamma \left( \varphi \right) \subseteq G\oplus H$.

(1)$\Rightarrow $(2) By assumption, we have that $\varphi $ has a Borel lift 
$f:\hat{G}\rightarrow \hat{H}$. We have that $\left( x,y\right) \in \hat{%
\Gamma}\left( \varphi \right) $ if and only if there exist $z\in
E_{H}\subseteq \hat{H}\times \hat{H}$ such that $\pi _{0}\left( z\right)
=f\left( x\right) $ and $\pi _{1}\left( z\right) =y$, where $\pi _{0},\pi
_{1}$ are the canonical projections from $\hat{H}\times \hat{H}$ to $\hat{H}$%
. Since by assumption $E_{H}$ is an analytic equivalence relation on $\hat{H}
$, it follows that $\hat{\Gamma}\left( \varphi \right) $ is an analytic
subset of $\hat{G}\oplus \hat{H}$, and hence by definition $\Gamma \left(
\varphi \right) $ is an analytic subgroup of $G\oplus H$.

(2)$\Rightarrow $(3) By the Jankov--von Neumann Uniformization Theorem \cite[%
Theorem 18.1]{kechris_classical_1995} applied to $\hat{\Gamma}\left( \varphi
\right) \subseteq \hat{G}\oplus \hat{H}$, we have that there exists a $%
\sigma (\boldsymbol{\Sigma }_{1}^{1})$-measurable lift $f:\hat{G}\rightarrow 
\hat{H}$ for $\varphi $. Since analytic sets are Baire-measurable \cite[%
Theorem 21.6]{kechris_classical_1995}, we have that $f$ is Baire-measurable,
and $\varphi $ is Baire-definable.

(3)$\Rightarrow $(1) Suppose that $\varphi $ is Baire-definable. Let $f:\hat{%
G}\rightarrow \hat{H}$ be a Baire-measurable lift of $\varphi $. Then there
exists a dense $G_{\delta }$ subset $C$ of $G$ such that $f|_{C}$ is
continuous. Consider the relation%
\begin{equation*}
P=\{\left( x,y\right) \in \hat{G}\times \hat{G}:y\in \left( x-C\right) \cap
C\}\text{.}
\end{equation*}%
Then by \cite[Theorem 18.6]{kechris_classical_1995}---see also \cite[proof
of Lemma 3.7]{kechris_borel_2016}---there exists a Borel function $\sigma :%
\hat{G}\rightarrow \hat{G}$ such that $\sigma \left( x\right) =x$ for $x\in
C $ and $\left( x,\sigma \left( x\right) \right) \in P$ for $x\in \hat{G}%
\setminus C$. The hypotheses of \cite[Theorem 18.6]{kechris_classical_1995}
are satisfied by \cite[Theorem 16.1]{kechris_classical_1995}, where one sets 
$\mathcal{I}_{x}$ to be the $\sigma $-ideal of meager subsets of $\hat{G}$
for every $x\in \hat{G}$. Then we have that defining%
\begin{equation*}
g\left( x\right) :=f\left( \sigma \left( x\right) \right) +f\left( x-\sigma
\left( x\right) \right)
\end{equation*}%
for $x\in \hat{G}$ yields a Borel lift for $\varphi $. This shows that $%
\varphi $ is Borel-definable.

Suppose now that $G$ is an abelian group with a locally compact Polish cover.

(4)$\Rightarrow $(1) Let $f:\hat{G}\rightarrow \hat{H}$ be a Haar-measurable
lift of $\varphi $. Then there exists a Borel set $C\subseteq \hat{G}$ such
that $\hat{G}\setminus C$ is null and $f|_{C}$ is Borel. Define%
\begin{equation*}
P=\{\left( x,y\right) \in \hat{G}\times \hat{G}:y\in \left( x-C\right) \cap
C\}\text{.}
\end{equation*}%
By \cite[Corollary 18.7]{kechris_classical_1995} there exists a Borel
function $\sigma :\hat{G}\rightarrow \hat{G}$ such that $\left( x,\sigma
\left( x\right) \right) \in P$ for every $x\in \hat{G}$. Then we have that
defining%
\begin{equation*}
h\left( x\right) :=f\left( \sigma \left( x\right) \right) +f\left( x-\sigma
\left( x\right) \right)
\end{equation*}%
for $x\in \hat{G}$ yields a Borel lift for $\varphi $.
\end{proof}

\begin{corollary}
\label{Corollary:Borel-definable-set-isomorphism}Suppose that $G$ is an
abelian group with a Polish cover, and $H$ is an abelian $\boldsymbol{\Sigma 
}_{1}^{1}$-definable group. If there exists a group isomorphism $\varphi
:G\rightarrow H$ with analytic graph, then $H$ is a Borel-definable group,
and $\varphi $ is a Borel-definable group isomorphism.
\end{corollary}

\begin{proof}
This follows immediately from Lemma \ref{Lemma:Borel-definable-set} and
Proposition \ref{Theorem:analytic-graph}.
\end{proof}

\subsection{Complexity of subgroups\label{Section:complexity}}

In this section, we consider the complexity of subgroups of groups with a
Polish cover. We reformulate in this context some results from \cite%
{hjorth_borel_1998,farah_borel_2006,lupini_complexity_2022}. Recall that a
Borel \emph{complexity class }$\Gamma $ is an assignment $X\mapsto \Gamma
\left( X\right) $ from Polish spaces to classes of Borel sets such that for
every continuous function $f:X\rightarrow Y$ between Polish spaces $X,Y$ and
for every $A\in \Gamma \left( Y\right) $, $A\subseteq Y$ and $f^{-1}\left(
A\right) \in \Gamma \left( X\right) $. Given such a complexity class, its
dual class $\check{\Gamma}$ is defined by setting $\check{\Gamma}\left(
X\right) =\left\{ X\setminus A:A\in \Gamma \left( X\right) \right\} $ for
every Polish space $X$. A complexity class $\Gamma $ is \emph{not self-dual}
if it is different from $\check{\Gamma}$. We will be mostly concerned with
the complexity classes $\boldsymbol{\Sigma }_{\alpha }^{0}$, $\boldsymbol{%
\Sigma }_{\alpha }^{0}$, $D(\boldsymbol{\Sigma }_{\alpha }^{0})$, $%
\boldsymbol{\Delta }_{\alpha }^{0}$ for $1\leq \alpha <\omega _{1}$ and
their duals; see \cite[Section 11.B]{kechris_classical_1995}.

\begin{definition}
Suppose that $H$ is a subgroup of an abelian group with a Polish cover $G=%
\hat{G}/N$. Set $\hat{H}=\{x\in \hat{G}:x+N\in H\}$. Let $\Gamma $ be a
complexity class. We say that $H$ belongs to $\Gamma (G)$ or that $H$ is $%
\Gamma $ in $G$ if and only if $\hat{H}\in \Gamma (\hat{G})$. If $\Gamma $
is not self-dual, then we say that $\Gamma $ is the complexity class of $H$
in $G$ if and only if $\hat{H}\in \Gamma (\hat{G})$ and $\hat{H}\notin 
\check{\Gamma}(\hat{G})$.
\end{definition}

Given a Borel-definable set $X=\hat{X}/E$, we denote by $=_{X}$ the
equivalence relation $E$. In particular, if $G=\hat{G}/N$ is a group with a
Polish cover, and $H=\hat{H}/N$ is a subgroup with a Polish cover of $G$,
then $=_{G/H}$ is the coset relation of $\hat{H}$ inside of $\hat{G}$.
Recall that an equivalence relation $E$ on a Polish space $X$ is \emph{%
potentially }$\Gamma $ if it is Borel-reducible to an equivalence relation $%
F $ on a Polish space $Y$ such that $F\in \Gamma \left( Y\times Y\right) $.

As a consequence of \cite[Theorem 1.1 and Theorem 1.2]%
{lupini_complexity_2022} we have the following results.

\begin{proposition}
\label{Proposition:complexity-classes}The following is a complete list of
possible Borel complexity classes of subgroups with a Polish cover of
abelian groups with a Polish cover: $\boldsymbol{\Pi }_{1+\lambda }^{0}$, $%
\boldsymbol{\Sigma }_{1+\lambda +1}^{0}$, $D(\boldsymbol{\Pi }_{1+\lambda
+n+1}^{0})$, and $\boldsymbol{\Pi }_{1+\lambda +n+2}^{0}$ for $\lambda
<\omega _{1}$ either zero or a limit ordinal, and $n<\omega $.
\end{proposition}

\begin{proposition}
\label{Proposition:non-Archimedean-complexity-classes}The following is a
complete list of possible Borel complexity classes of subgroups with a \emph{%
non-Archimedean} Polish cover of abelian groups with a Polish cover: $%
\boldsymbol{\Pi }_{1+\lambda }^{0}$, $\boldsymbol{\Sigma }_{1+\lambda
+1}^{0} $, $D(\boldsymbol{\Pi }_{1+\lambda +n+2}^{0})$, and $\boldsymbol{\Pi 
}_{1+\lambda +n+2}^{0}$ for $\lambda <\omega _{1}$ either zero or a limit
ordinal, and $n<\omega $.
\end{proposition}

The following can be seen as a consequence of \cite[Lemma 3.2 and Theorem 3.3%
]{lupini_complexity_2022} and Proposition \ref%
{Proposition:non-Archimedean-complexity-classes}; see also \cite[Section 5]%
{hjorth_borel_1998}.

\begin{proposition}
\label{Proposition:potential-complexity}Suppose that $G$ is an abelian group
with a Polish cover, and $H$ is a subgroup with a Polish cover of $G$.

\begin{itemize}
\item Let $\Gamma $ be one of the following complexity classes: $\boldsymbol{%
\Pi }_{\alpha }^{0}$, $\boldsymbol{\Sigma }_{\beta }^{0}$, $D(\boldsymbol{%
\Pi }_{\alpha }^{0})$, for $1\leq \alpha ,\beta <\omega _{1}$ and $\beta
\neq 2$. Then $H\in \Gamma \left( G\right) $ if and only if $=_{G/H}$ is
potentially $\Gamma $.

\item $=_{G/H}$ is potentially $\boldsymbol{\Sigma }_{2}^{0}$ if and only if 
$H$ is $D(\boldsymbol{\Pi }_{2}^{0})$ in $G$.

\item If $H$ is a subgroup with a non-Archimedean Polish cover, then $%
=_{G/H} $ is potentially $\boldsymbol{\Sigma }_{2}^{0}$ if and only if $H$
is $\boldsymbol{\Sigma }_{2}^{0}$ in $G$;

\item If $H$ is $\check{D}(\boldsymbol{\Pi }_{\alpha }^{0})$ in $G$ for some 
$1\leq \alpha <\omega _{1}$, then $H\in \boldsymbol{\Pi }_{\alpha
}^{0}\left( G\right) $ or $H\in \boldsymbol{\Sigma }_{\alpha }^{0}\left(
G\right) $.
\end{itemize}
\end{proposition}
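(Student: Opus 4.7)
The approach is to reduce each bullet to the corresponding classical statement about a Polishable subgroup of a Polish group, and then invoke the results from \cite{lupini_complexity_2022} cited above. Writing $G = \hat{G}/N$ with $\hat{G}$ an abelian Polish group and $N$ a Polishable subgroup, and setting $\hat{H} = \{x \in \hat{G} : x + N \in H\}$, one has that $\hat{H}$ is a Polishable subgroup of $\hat{G}$ by Definition \ref{Definition:Polishable}. By the definitions, $H \in \Gamma(G)$ if and only if $\hat{H} \in \Gamma(\hat{G})$, and the equivalence relation $=_{G/H}$ on $\hat{G}$ is precisely the coset equivalence relation of $\hat{H}$ in $\hat{G}$. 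Thus both sides of each bullet depend only on the pair $(\hat{G}, \hat{H})$, and the proposition reduces to the case $N = \{0\}$.

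The first bullet is then an instance of Theorem 3.3 of \cite{lupini_complexity_2022}, which for a Polishable subgroup of a Polish group establishes the equivalence between $\Gamma$-complexity of the subgroup and potential $\Gamma$-complexity of its coset relation, precisely for $\Gamma \in \{\boldsymbol{\Pi}^0_\alpha, \boldsymbol{\Sigma}^0_\beta, D(\boldsymbol{\Pi}^0_\alpha)\}$ with $\beta \neq 2$. The second bullet is the exceptional case $\Gamma = \boldsymbol{\Sigma}^0_2$: a Polishable subgroup of a Polish group, being a continuous homomorphic image of a Polish group, is automatically $\boldsymbol{\Pi}^0_2$, so the class $\boldsymbol{\Sigma}^0_2$ does not appear in the list of admissible complexities of Proposition \ref{Proposition:complexity-classes}, and Lemma 3.2 of \cite{lupini_complexity_2022} shows that potential $\boldsymbol{\Sigma}^0_2$ complexity of the coset relation is matched instead by $D(\boldsymbol{\Pi}^0_2)$ complexity of the subgroup.

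For the third bullet, when $H$ is non-Archimedean, Proposition \ref{Proposition:non-Archimedean-complexity-classes} removes $D(\boldsymbol{\Pi}^0_2)$ from the admissible list and inserts $\boldsymbol{\Sigma}^0_2$. Given $=_{G/H}$ potentially $\boldsymbol{\Sigma}^0_2$, the second bullet yields $H \in D(\boldsymbol{\Pi}^0_2)(G)$, hence $H \in \boldsymbol{\Delta}^0_3(G)$; the non-Archimedean admissible list then forces the strict complexity class of $H$ to be $\boldsymbol{\Pi}^0_1$ or $\boldsymbol{\Sigma}^0_2$, so $H \in \boldsymbol{\Sigma}^0_2(G)$, and the converse follows from $\boldsymbol{\Sigma}^0_2 \subseteq D(\boldsymbol{\Pi}^0_2)$ together with the second bullet. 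For the fourth bullet, one uses $\check{D}(\boldsymbol{\Pi}^0_\alpha) \subseteq \boldsymbol{\Pi}^0_{\alpha+1} \cap \boldsymbol{\Sigma}^0_{\alpha+1}$ to observe that $H \in \check{D}(\boldsymbol{\Pi}^0_\alpha)$ forces $H \in \boldsymbol{\Delta}^0_{\alpha+1}$; a case analysis against the admissible list from Proposition \ref{Proposition:complexity-classes} then rules out every strict complexity class other than $\boldsymbol{\Pi}^0_\beta$ or $\boldsymbol{\Sigma}^0_\beta$ with $\beta \leq \alpha$, and $D(\boldsymbol{\Pi}^0_\beta)$ with $\beta < \alpha$, each of which lies in $\boldsymbol{\Pi}^0_\alpha \cup \boldsymbol{\Sigma}^0_\alpha$.

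The main obstacle is the exceptional nature of $\boldsymbol{\Sigma}^0_2$ in the second and third bullets, where the correspondence between complexity of the subgroup and potential complexity of the coset relation breaks the uniform pattern of the first bullet and must be handled by the separate argument in Lemma 3.2 of \cite{lupini_complexity_2022}; the bookkeeping case analysis for the fourth bullet, while elementary, has to be carried out carefully against each entry of the admissible-complexity list.
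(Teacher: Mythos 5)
Your proposal takes the same route as the paper, which gives no independent argument for this proposition and simply records it as a consequence of \cite[Lemma 3.2 and Theorem 3.3]{lupini_complexity_2022} together with Propositions \ref{Proposition:complexity-classes} and \ref{Proposition:non-Archimedean-complexity-classes}; your reduction to the case $N=\{0\}$ and your case analyses for the third and fourth bullets are correct and fill in details the paper leaves implicit. One side remark in your treatment of the second bullet is false, though: a Polishable subgroup of a Polish group is \emph{not} automatically $\boldsymbol{\Pi}_{2}^{0}$ (it can have arbitrarily high Borel complexity --- that is the whole point of Proposition \ref{Proposition:complexity-classes}), and $\boldsymbol{\Sigma }_{2}^{0}$ \emph{does} occur in that list, namely as $\boldsymbol{\Sigma }_{1+\lambda +1}^{0}$ with $\lambda =0$ (indeed the third bullet and case (2d) of Theorem \ref{Theorem:complexity-Solecki} are precisely about this class). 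This does not damage the proof, since your second bullet ultimately rests on the citation of \cite[Lemma 3.2]{lupini_complexity_2022} rather than on that remark, but the purported explanation of why $\beta =2$ is exceptional should be removed; the actual reason is that potential $\boldsymbol{\Sigma }_{2}^{0}$-ness of the coset relation (equivalently, essential countability) is detected by $D\left( \boldsymbol{\Pi }_{2}^{0}\right) $ rather than by $\boldsymbol{\Sigma }_{2}^{0}$ membership of the subgroup, even though both classes are realized.
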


As a consequence of Proposition \ref{Proposition:complexity-classes},
Proposition \ref{Proposition:potential-complexity}, Remark \ref%
{Remark:inverse}, and Theorem \ref{Theorem:left-heart-B2}(1) below, one has
the following.

\begin{proposition}
\label{Proposition:complexity-preimage}Suppose that $G,H$ are groups with a
Polish cover, and $f:G\rightarrow H$ is a Borel-definable homomorphism.\ Let 
$\Gamma $ be one of the following Borel complexity classes: $\boldsymbol{\Pi 
}_{\alpha }^{0}$, $\boldsymbol{\Sigma }_{\beta }^{0}$, $D(\boldsymbol{\Pi }%
_{\alpha }^{0})$, $\check{D}(\boldsymbol{\Pi }_{\alpha }^{0})$, $\boldsymbol{%
\Delta }_{\alpha }^{0}$ for $1\leq \alpha ,\beta <\omega _{1}$ and $\beta
\neq 2$. Suppose that $H_{0}$ is a subgroup with a Polish cover of $H$.

\begin{itemize}
\item If $H_{0}$ is $\Gamma $ in $H$, then the subgroup with a Polish cover $%
f^{-1}\left( H_{0}\right) $ of $G$ is $\Gamma $ in $G$. The converse holds
if $f$ is surjective.

\item Suppose that $G,H,H_{0}$ are abelian groups with a non-Archimedean
Polish cover. If $H_{0}$ is $\boldsymbol{\Sigma }_{2}^{0}$ in $H$, then $%
f^{-1}\left( G\right) $ is $\boldsymbol{\Sigma }_{2}^{0}$ in $G$. The
converse holds if $f$ is surjective.
\end{itemize}
\end{proposition}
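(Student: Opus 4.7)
The plan is to recast the statement in terms of Borel reducibility of the coset equivalence relations $=_{G/f^{-1}(H_0)}$ on $\hat{G}$ and $=_{H/H_0}$ on $\hat{H}$, both well defined because Proposition~\ref{Proposition:preimage} ensures that $f^{-1}(H_0)$ is Polishable, and then to invoke Proposition~\ref{Proposition:potential-complexity}, which translates between potential complexity of the coset relation and Borel complexity of the subgroup.

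The heart of the argument is that any Borel lift $\hat{f}\colon\hat{G}\to\hat{H}$ of $f$ is automatically a Borel reduction from $=_{G/f^{-1}(H_0)}$ to $=_{H/H_0}$: unwinding definitions gives $\widehat{f^{-1}(H_0)}=\hat{f}^{-1}(\widehat{H_0})$, and since $\hat{f}$ lifts a group homomorphism one has $\hat{f}(x_1-x_2)-\hat{f}(x_1)+\hat{f}(x_2)\in N_H\subseteq\widehat{H_0}$ for all $x_1,x_2\in\hat{G}$, which upgrades this set-theoretic identity to a reduction of the coset relations. Thus $=_{G/f^{-1}(H_0)}$ has potential complexity at most that of $=_{H/H_0}$, and Proposition~\ref{Proposition:potential-complexity} yields the forward direction as follows: item~(1) handles $\Gamma\in\{\boldsymbol{\Pi}_\alpha^0,\boldsymbol{\Sigma}_\beta^0\ (\beta\neq 2),D(\boldsymbol{\Pi}_\alpha^0)\}$ directly; the $\boldsymbol{\Delta}_\alpha^0$ case is obtained by intersecting the $\boldsymbol{\Pi}_\alpha^0$ and $\boldsymbol{\Sigma}_\alpha^0$ conclusions; and for $\Gamma=\check{D}(\boldsymbol{\Pi}_\alpha^0)$ item~(4) of Proposition~\ref{Proposition:potential-complexity} reduces both the hypothesis and the desired conclusion to membership in $\boldsymbol{\Pi}_\alpha^0\cup\boldsymbol{\Sigma}_\alpha^0$, bringing us back to the previous cases. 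The non-Archimedean $\boldsymbol{\Sigma}_2^0$ assertion in the second bullet is handled identically, using item~(3) of Proposition~\ref{Proposition:potential-complexity} in place of item~(1) and Proposition~\ref{Proposition:preimage-nonA} to guarantee that $f^{-1}(H_0)$ remains non-Archimedean Polishable.

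For the converse direction under surjectivity of $f$, the induced map $\bar{f}\colon G/f^{-1}(H_0)\to H/H_0$ is a bijective Borel-definable group homomorphism between abelian groups with a Polish cover; by Remark~\ref{Remark:inverse} its inverse is Borel-definable, and any Borel lift $\hat{s}\colon\hat{H}\to\hat{G}$ of $\bar{f}^{-1}$ is, by the same computation as above, a Borel reduction from $=_{H/H_0}$ to $=_{G/f^{-1}(H_0)}$. This establishes Borel bireducibility of the two coset equivalence relations and hence equality of their potential complexities, so Proposition~\ref{Proposition:potential-complexity} closes the converse in each of the listed cases.

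The main obstacle is not the Borel-reduction step itself, which is essentially an unwinding of what it means to be a Borel lift, but the $\Gamma$-by-$\Gamma$ bookkeeping through Proposition~\ref{Proposition:potential-complexity}: one must keep track of why $\boldsymbol{\Sigma}_2^0$ has to be excluded in the general case (because potential $\boldsymbol{\Sigma}_2^0$ of the coset relation corresponds to $D(\boldsymbol{\Pi}_2^0)$ complexity of the subgroup rather than to $\boldsymbol{\Sigma}_2^0$), and why this obstruction disappears under the non-Archimedean hypothesis via item~(3) of the same proposition.
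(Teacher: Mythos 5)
Your argument follows the paper's own route exactly: the paper derives this proposition in a single sentence from Proposition \ref{Proposition:preimage}, Proposition \ref{Proposition:potential-complexity}, Remark \ref{Remark:inverse}, and Proposition \ref{Proposition:preimage-nonA}, and your write-up supplies precisely the missing middle step, namely that a Borel lift of $f$ (respectively, of the inverse of the induced bijection $G/f^{-1}(H_{0})\rightarrow H/H_{0}$ when $f$ is surjective) is a Borel reduction between the coset relations, so that potential complexity transfers in the required direction(s).

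The one place where your bookkeeping is too quick is $\alpha =2$. For $\boldsymbol{\Delta}_{2}^{0}$ you cannot literally ``intersect the $\boldsymbol{\Pi}_{2}^{0}$ and $\boldsymbol{\Sigma}_{2}^{0}$ conclusions,'' since the $\boldsymbol{\Sigma}_{2}^{0}$ case is exactly the one excluded by the hypothesis $\beta \neq 2$; here one needs the separate observation that a $\boldsymbol{\Pi}_{2}^{0}$ subgroup of a Polish group is already closed (equivalently, appeal to Proposition \ref{Proposition:complexity-classes}), which collapses the $\boldsymbol{\Delta}_{2}^{0}$ case to the $\boldsymbol{\Pi}_{1}^{0}$ case. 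Likewise, your reduction of $\check{D}(\boldsymbol{\Pi}_{\alpha }^{0})$ via item (4) of Proposition \ref{Proposition:potential-complexity} sends the $\alpha =2$ instance into the $\boldsymbol{\Sigma}_{2}^{0}$ sub-case, where the potential-complexity dictionary only returns $D(\boldsymbol{\Pi}_{2}^{0})$ rather than $\check{D}(\boldsymbol{\Pi}_{2}^{0})$, so this instance either needs a separate argument or should be flagged as carrying the same caveat as $\boldsymbol{\Sigma}_{2}^{0}$. Everything else --- the identification $\widehat{f^{-1}(H_{0})}=\hat{f}^{-1}(\widehat{H_{0}})$, the use of Remark \ref{Remark:inverse} for the converse, and the use of Proposition \ref{Proposition:preimage-nonA} together with item (3) for the non-Archimedean bullet --- is correct and is exactly what the paper intends.
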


Recall that $E_{0}$ denotes the $\boldsymbol{\Sigma }_{2}^{0}$ equivalence
relation on the space $\mathcal{C}:=\left\{ 0,1\right\} ^{\omega }$ of
infinite binary sequences obtained by setting $\left( x_{i}\right)
E_{0}\left( y_{i}\right) \Leftrightarrow \exists n\in \omega \forall i\geq n$%
, $x_{i}=y_{i}$, and $E_{\infty }$ denotes the orbit equivalence relation
for the shift action of the free group $\mathbb{F}_{2}$ on $2$ generators on 
$\left\{ 0,1\right\} ^{\mathbb{F}_{2}}$. The $\boldsymbol{\Pi }_{3}^{0}$
equivalence relation $E_{0}^{\omega }$ on $\mathcal{C}^{\omega }$ is defined
by setting $\left( \boldsymbol{x}_{i}\right) E_{0}^{\omega }\left( 
\boldsymbol{y}_{i}\right) \Leftrightarrow \forall i$, $\boldsymbol{x}%
_{i}E_{0}\boldsymbol{y}_{i}$. The $\boldsymbol{\Pi }_{3}^{0}$ equivalence
relation $=^{+}$ on $\mathbb{R}^{\omega }$ is defined by setting $\left(
x_{i}\right) =^{+}\left( y_{i}\right) $ if and only if $\left( x_{i}\right) $
and $\left( y_{i}\right) $ are enumerations of the same countable set of
reals.

\begin{proposition}
\label{Proposition:complexity-equality}Suppose that $G=\hat{G}/N$ is a group
with a Polish cover, and that $H$ is a subgroup of $G$ with a
non-Archimedean Polish cover. Then:

\begin{enumerate}
\item $=_{G/H}$ is smooth if and only if $H$ is $\boldsymbol{\Pi }_{1}^{0}$
in $G$;

\item $=_{G/H}$ is Borel reducible to $E_{0}$ if and only if $=_{G/H}$ is
Borel reducible to $E_{\infty }$ if and only if $H$ is $\boldsymbol{\Sigma }%
_{2}^{0}$ in $G$, and $=_{G/H}$ is Borel bireducible with $E_{0}$ if and
only if $\boldsymbol{\Sigma }_{2}^{0}$ is the complexity class of $H$ in $G$;

\item $=_{G/H}$ is Borel reducible to $E_{0}^{\omega }$ if and only if $%
=_{G/H}$ is Borel reducible to $=^{+}$ if and only if $H$ is $\boldsymbol{%
\Pi }_{3}^{0}$ in $G$, and $=_{G/H}$ is Borel bireducible with $%
E_{0}^{\omega }$ if and only if $\boldsymbol{\Pi }_{3}^{0}$ is the
complexity class of $H$ in $G$.
\end{enumerate}
\end{proposition}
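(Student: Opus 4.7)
The plan is to combine the potential-complexity characterizations from Proposition \ref{Proposition:potential-complexity} with classical dichotomy theorems that, in the setting of orbit equivalence relations of non-Archimedean Polish group actions, convert \emph{potentially} belonging to one of the complexity classes $\boldsymbol{\Pi}_{1}^{0}$, $\boldsymbol{\Sigma}_{2}^{0}$, $\boldsymbol{\Pi}_{3}^{0}$ into actual Borel reducibility to the benchmarks $=_{\mathbb{R}}$, $E_{0}$, $E_{0}^{\omega}$ respectively. Since $H$ is a non-Archimedean Polishable subgroup of $G$, the coset equivalence relation $=_{G/H}$ is the orbit equivalence relation of the translation action of $\hat{H}$, endowed with its non-Archimedean Polish group topology, on $\hat{G}$; so these dichotomies are in force.

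For (1), the classical fact is that a Borel equivalence relation is smooth if and only if it is potentially $\boldsymbol{\Pi}_{1}^{0}$; applied together with the first bullet of Proposition \ref{Proposition:potential-complexity} with $\Gamma=\boldsymbol{\Pi}_{1}^{0}$, this yields the equivalence with $H\in\boldsymbol{\Pi}_{1}^{0}(G)$. For the first half of (2), I use the characterization (a consequence of Hjorth--Kechris--Louveau \cite{hjorth_borel_1998} in the non-Archimedean setting) that for such orbit equivalence relations Borel reducibility to $E_{0}$ is equivalent to being potentially $\boldsymbol{\Sigma}_{2}^{0}$, combined with the third bullet of Proposition \ref{Proposition:potential-complexity}, which in the non-Archimedean case identifies potentially $\boldsymbol{\Sigma}_{2}^{0}$ with $H\in\boldsymbol{\Sigma}_{2}^{0}(G)$. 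For the first half of (3), I invoke the analogous characterization from Hjorth's work that, for orbit equivalence relations of non-Archimedean Polish group actions, Borel reducibility to $E_{0}^{\omega}$ is equivalent to being potentially $\boldsymbol{\Pi}_{3}^{0}$, and then apply the first bullet of Proposition \ref{Proposition:potential-complexity} with $\Gamma=\boldsymbol{\Pi}_{3}^{0}$.

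For the bireducibility clauses in (2) and (3), I would argue from the list of possible exact complexity classes of non-Archimedean Polishable subgroups given by Proposition \ref{Proposition:non-Archimedean-complexity-classes}. If $=_{G/H}$ is Borel bireducible with $E_{0}$, then the already-established half of (2) gives $H\in\boldsymbol{\Sigma}_{2}^{0}(G)$, while $E_{0}\leq_{B}\,=_{G/H}$ forces $=_{G/H}$ to be non-smooth, so by (1) $H\notin\boldsymbol{\Pi}_{1}^{0}(G)$; scanning the list in Proposition \ref{Proposition:non-Archimedean-complexity-classes} this leaves exact complexity $\boldsymbol{\Sigma}_{2}^{0}$. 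Conversely, if $H$ has exact complexity $\boldsymbol{\Sigma}_{2}^{0}$, then $H\notin\boldsymbol{\Pi}_{1}^{0}(G)$, so by (1) $=_{G/H}$ is non-smooth, and the Harrington--Kechris--Louveau $E_{0}$-dichotomy provides $E_{0}\leq_{B}\,=_{G/H}$; together with the reducibility to $E_{0}$ from the first half of (2), this yields bireducibility. The case of (3) is analogous: one uses item (2) to rule out Borel reducibility to $E_{0}$, and then invokes the Hjorth-type dichotomy asserting that an orbit equivalence relation of a non-Archimedean Polish group action which is potentially $\boldsymbol{\Pi}_{3}^{0}$ but not Borel reducible to $E_{0}$ satisfies $E_{0}^{\omega}\leq_{B}\,=_{G/H}$.

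The main obstacle is pinning down the precise form of the Hjorth-type dichotomy characterizing Borel reducibility to $E_{0}^{\omega}$ in terms of potential $\boldsymbol{\Pi}_{3}^{0}$-ness for coset equivalence relations of non-Archimedean Polishable subgroups; one may need to combine ingredients from \cite{hjorth_borel_1998}, \cite{farah_borel_2006}, and \cite{lupini_complexity_2022} to assemble it in exactly the form required. The verification that the translation action of $\hat{H}$ on $\hat{G}$ is a continuous non-Archimedean Polish group action (so that the dichotomies apply) is immediate from the hypothesis that $H$ is a non-Archimedean Polishable subgroup.
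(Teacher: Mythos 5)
Your skeleton is essentially the paper's: reduce everything to the exact Borel complexity of $\hat{H}$ in $\hat{G}$ via Proposition \ref{Proposition:potential-complexity}, get the hardness directions from the Glimm--Effros dichotomy together with the list in Proposition \ref{Proposition:non-Archimedean-complexity-classes}, and get the reducibility directions from dichotomy theorems applied to the translation action of the non-Archimedean Polish group $\hat{H}$ on $\hat{G}$. Item (1) and the bookkeeping for the two bireducibility clauses are fine as written.

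The gap is exactly where you flag it, and it is genuine: the two dichotomies you invoke --- that for orbit equivalence relations of non-Archimedean Polish group actions, $\leq_{B}E_{0}$ is equivalent to being potentially $\boldsymbol{\Sigma}_{2}^{0}$, and $\leq_{B}E_{0}^{\omega}$ is equivalent to being potentially $\boldsymbol{\Pi}_{3}^{0}$ --- are false at that level of generality, so they cannot be assembled from \cite{hjorth_borel_1998} alone. For instance, $E_{\infty}$ is induced by an action of a countable (hence non-Archimedean Polish) group and is potentially $\boldsymbol{\Sigma}_{2}^{0}$ but not Borel reducible to $E_{0}$; likewise $=^{+}$ is induced by an $S_{\infty}$-action and is potentially $\boldsymbol{\Pi}_{3}^{0}$ but not Borel reducible to $E_{0}^{\omega}$. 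What rescues the argument is that the acting group $\hat{H}$ is moreover \emph{abelian}, and the dichotomies you need are precisely the abelian (more generally TSI) non-Archimedean results of Ding--Gao \cite{ding_non-archimedean_2017} (Theorem 6.1: essentially countable iff $\leq_{B}E_{0}$; Corollary 6.3: otherwise $E_{0}^{\omega}$ reduces in) and Allison \cite{allison_non-archimedean_2020} (Corollary 6.11: $\leq_{B}E_{0}^{\omega}$ iff potentially $\boldsymbol{\Pi}_{3}^{0}$). This is exactly how the paper's proof proceeds, routing item (2) through essential countability via \cite[Theorem 12.5.7]{gao_invariant_2009} before applying Ding--Gao. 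Once you make the abelianness of $\hat{H}$ explicit and substitute these two references for the appeal to Hjorth--Kechris--Louveau, your argument closes; without them, the reducibility halves of items (2) and (3) are not established.
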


\begin{proof}
Without loss of generality we can assume that $H=\left\{ 0\right\} $.

(1) We have that $\left\{ 0\right\} $ is $\boldsymbol{\Pi }_{1}^{0}$ in $G$
if and only if $N$ is a closed subgroup of $\hat{G}$, which is equivalent to
the assertion that $=_{G}$ is smooth; see \cite[page 574]{solecki_coset_2009}%
.

(2) By \cite[Theorem 12.5.7 and Theorem 7.3.8]{gao_invariant_2009}, $\left\{
0\right\} $ is $\boldsymbol{\Sigma }_{2}^{0}$ in $G$ if and only if $%
=_{G}\leq _{B}E_{\infty }$, which holds if and only if $=_{G}\leq _{B}E_{0}$
by \cite[Theorem 6.1]{ding_non-archimedean_2017}. Furthermore, by Item (1),
Pettis' Theorem \cite[Theorem 9.9]{kechris_classical_1995}, and the
Glimm--Effros dichotomy \cite{harrington_glimm-effros_1990}, we have that $%
\left\{ 0\right\} $ is not $\boldsymbol{\Pi }_{2}^{0}$ if and only if $%
\left\{ 0\right\} $ is not $\boldsymbol{\Pi }_{1}^{0}$ if and only if $N$ is
not a closed subgroup of $\hat{G}$, if and only if $E_{0}\leq _{B}=_{G}$.

(3) By \cite[Corollary 6.3]{ding_non-archimedean_2017} and Proposition \ref%
{Proposition:potential-complexity} and we have that $\left\{ 0\right\} $ is
not $\boldsymbol{\Sigma }_{3}^{0}$ if and only if $\left\{ 0\right\} $ is
not $\boldsymbol{\Sigma }_{2}^{0}$ if and only if $E_{0}^{\omega }\leq
_{B}=_{G}$. By \cite[Corollary 6.11]{allison_non-archimedean_2020}, $\left\{
0\right\} $ is $\boldsymbol{\Pi }_{3}^{0}$ if and only if $=_{G}\leq
_{B}E_{0}^{\omega }$, and by \cite[Theorem 12.5.5]{gao_invariant_2009}, $%
\left\{ 0\right\} $ is $\boldsymbol{\Pi }_{3}^{0}$ if and only if $=_{G}\leq
_{B}=^{+}$.
\end{proof}

\subsection{The Solecki subgroups\label{Section:solecki}}

Every abelian group with a Polish cover admits a canonical sequence of
subgroups indexed by countable ordinals. As these were originally described
by Solecki in \cite{solecki_polish_1999}, we call them \emph{Solecki
subgroups}. They have also been considered in \cite{solecki_coset_2009,
farah_borel_2006}.

Suppose that $G=\hat{G}/N$ is an abelian group with a Polish cover. Then 
\cite[Lemma 2.3]{solecki_polish_1999} implies that $G$ has a smallest $%
\boldsymbol{\Pi }_{3}^{0}$ subgroup, which we denote by $s_{1}\left(
G\right) =s_{1}^{N}(\hat{G})/N$. One can explicitly describe $s_{1}^{N}(\hat{%
G})$ as the subgroup of $\hat{G}$ defined by%
\begin{equation*}
\bigcap_{V}\bigcup_{z\in N}\overline{z+V}^{G}
\end{equation*}%
where $V$ ranges among the open zero neighborhoods in $N$ and $\overline{z+V}%
^{\hat{G}}$ is the closure of $z+V$ inside of $\hat{G}$. It is proved in 
\cite[Lemma 2.3]{solecki_polish_1999} that $s_{1}(G)$ satisfies the
following properties:

\begin{itemize}
\item $s_{1}\left( G\right) $ is a subgroup with a Polish cover;

\item $\left\{ 0\right\} $\emph{\ }is dense in $s_{1}\left( G\right) $;

\item a basis of zero neighborhoods in $s_{1}^{N}(\hat{G})$ consists of sets
of the form $\overline{W}^{\hat{G}}\cap s_{1}^{N}(\hat{G})$ where $W$ is an
open zero neighborhood in $N$;

\item if $A\subseteq \hat{G}$ is $\boldsymbol{\Pi }_{3}^{0}$ and contains $N$%
, then $A\cap s_{1}^{N}(\hat{G})$ is comeager in the Polish group topology
of $s_{1}^{N}(\hat{G})$.
\end{itemize}

It follows that if $H$ is a $\boldsymbol{\Pi }_{3}^{0}$ subgroup with a
Polish cover of $G$, then $s_{1}\left( G\right) \subseteq \overline{\left\{
0\right\} }^{H}\subseteq H$. We recall the following characterization of $%
s_{1}\left( G\right) $ from \cite[Lemma 4.2]{lupini_complexity_2022}.

\begin{lemma}
\label{Lemma:characterize-solecki}Suppose that $G=\hat{G}/N$ is an abelian
group with a Polish cover.\ Let $H=\hat{H}/N$ be a subgroup with a Polish
cover of $G$ such that:

\begin{enumerate}
\item $\left\{ 0\right\} $ is dense in $H$;

\item for every open neighborhood $V$ of zero in $N$, $\overline{V}^{\hat{G}%
}\cap \hat{H}$ contains an open neighborhood of zero in $\hat{H}$.
\end{enumerate}

If $A\subseteq \hat{G}$ is $\boldsymbol{\Pi }_{3}^{0}$ and contains $N$,
then $A\cap \hat{H}$ is comeager in $\hat{H}$. In particular, $H\subseteq
s_{1}\left( G\right) $. If $H$ is furthermore $\boldsymbol{\Pi }_{3}^{0}$,
then $H=s_{1}\left( G\right) $.
\end{lemma}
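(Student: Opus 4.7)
The strategy is to first establish the comeagerness claim, then derive $H\subseteq s_{1}(G)$ from it, and obtain the equality under the additional $\boldsymbol{\Pi}_{3}^{0}$ hypothesis by minimality of $s_{1}(G)$.

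Write $A=\bigcap_{n}A_{n}$ as an intersection of $F_{\sigma}$ sets $A_{n}=\bigcup_{k}F_{n,k}$, each $F_{n,k}$ closed in $\hat{G}$. Since a countable intersection of comeager sets is comeager, it suffices to show each $A_{n}\cap \hat{H}$ is comeager in the Polish topology of $\hat{H}$. Fix $n$. Applying the Baire category theorem in the Polish group $N$ to the cover $N=\bigcup_{k}(F_{n,k}\cap N)$ produces a dense open $I_{n}\subseteq N$ consisting of those $z\in N$ for which there exist $k_{z}$ and an open neighborhood $V_{z}$ of $0$ in $N$ with $z+V_{z}\subseteq F_{n,k_{z}}$. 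Taking closures in $\hat{G}$ gives $\overline{V_{z}}^{\hat{G}}\subseteq -z+F_{n,k_{z}}$, and hypothesis (2) then yields an open neighborhood $W_{z}$ of $0$ in $\hat{H}$'s Polish topology contained in $\overline{V_{z}}^{\hat{G}}\cap \hat{H}$, so that $z+W_{z}\subseteq F_{n,k_{z}}\cap \hat{H}\subseteq A_{n}\cap \hat{H}$. Setting $\mathcal{O}_{n}:=\bigcup_{z\in I_{n}}(z+W_{z})$, we obtain an open subset of $\hat{H}$ contained in $A_{n}\cap \hat{H}$ and containing the dense open $I_{n}$ of $N$.

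The crux is then to show $\mathcal{O}_{n}$ is dense in $\hat{H}$'s Polish topology, which reduces (by continuity of the inclusion $N\hookrightarrow \hat{H}$ and the density of $I_{n}$ in $N$) to the density of $N$ in $\hat{H}$'s Polish topology. Hypothesis (1) only gives density of $N$ in the coarser subspace topology inherited from $\hat{G}$; the plan is to use (2) to identify the family $\{\overline{V}^{\hat{G}}\cap \hat{H}:V\text{ open in }N\}$ as a neighborhood basis at $0$ for $\hat{H}$'s Polish topology, exploiting the uniqueness of the Polish group topology on a Polishable subgroup. Granting this identification, density of $N$ in $\hat{H}$ Polish follows: for any $y\in \hat{H}$ and any basic neighborhood $y+\overline{V}^{\hat{G}}\cap \hat{H}$, hypothesis (1) furnishes $\hat{G}$-approximations of $y$ by elements of $N$ which, after refinement using the nested family of closures supplied by (2), land in the prescribed neighborhood. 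With density of $N$ established, $\mathcal{O}_{n}$ is dense open and $A_{n}\cap \hat{H}$ is comeager, whence $A\cap \hat{H}=\bigcap_{n}(A_{n}\cap \hat{H})$ is comeager.

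To derive $H\subseteq s_{1}(G)$, apply the comeagerness claim to $A=s_{1}^{N}(\hat{G})$, which is itself $\boldsymbol{\Pi}_{3}^{0}$ and contains $N$: then $s_{1}^{N}(\hat{G})\cap \hat{H}$ is a comeager Borel subgroup of the Polish group $\hat{H}$, and a proper Borel subgroup of a Polish group cannot be comeager (by Pettis a non-meager Borel subgroup is clopen, so a proper one has non-meager complement), forcing $s_{1}^{N}(\hat{G})\cap \hat{H}=\hat{H}$. Under the further hypothesis that $H$ is $\boldsymbol{\Pi}_{3}^{0}$, the minimality of $s_{1}(G)$ among $\boldsymbol{\Pi}_{3}^{0}$ Polishable subgroups of $G$ yields $s_{1}(G)\subseteq H$, giving $H=s_{1}(G)$. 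The main obstacle is precisely the density of $N$ in $\hat{H}$'s Polish topology, where the subtle interplay between (1)—controlling the coarse $\hat{G}$-topology—and (2)—controlling the fine Polish topology—becomes indispensable.
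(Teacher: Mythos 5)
The paper does not actually prove this lemma --- it is imported verbatim from \cite[Lemma 4.2]{lupini_complexity_2022} --- so there is no in-paper argument to compare against line by line; your Baire-category decomposition ($A=\bigcap_n\bigcup_k F_{n,k}$, dense open $I_n\subseteq N$ of points with a local witness $z+V_z\subseteq F_{n,k_z}$, passage to closures in $\hat{G}$, then hypothesis (2) to fatten $z+V_z$ to a $\hat{H}$-open set) is exactly the standard route, and that part of your argument, together with the Pettis-style deduction of $H\subseteq s_1(G)$ and the appeal to minimality for the converse inclusion, is correct.

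The genuine problem is your treatment of hypothesis (1), which you have misread. In this paper's conventions, $H=\hat{H}/N$ is itself regarded as a group with a Polish cover, and ``$\{0\}$ is dense in $H$'' means $\overline{\{0\}}^{H}=H$, where the closure is taken with respect to the quotient topology induced by the \emph{Polish group topology of the Polishable subgroup} $\hat{H}$; unwinding, (1) says precisely that $N$ is dense in $(\hat{H},\tau_{\hat{H}})$. It is not the weaker statement that $N$ is dense in $\hat{H}$ for the subspace topology from $\hat{G}$. Having adopted the weaker reading, you are forced to invent a bridge, and the bridge you propose does not hold up: hypothesis (2) asserts only that each $\overline{V}^{\hat{G}}\cap\hat{H}$ \emph{contains} a $\tau_{\hat{H}}$-neighborhood of $0$, i.e.\ one inclusion of the two needed for these sets to form a neighborhood basis; nothing in (1)--(2) gives the reverse inclusion (that every $\tau_{\hat{H}}$-neighborhood of $0$ contains some $\overline{V}^{\hat{G}}\cap\hat{H}$), and ``uniqueness of the Polish group topology'' cannot manufacture it, since the family $\{\overline{V}^{\hat{G}}\cap\hat{H}\}$ need not generate a Polish group topology at all. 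Compare with the listed properties of $s_1^N(\hat{G})$ itself, where the basis property is stated as an additional fact, strictly stronger than condition (2) of the lemma. Once you read (1) correctly, the ``crux'' evaporates: $I_n$ is dense in $N$, the inclusion $N\hookrightarrow\hat{H}$ is continuous (a Borel homomorphism of Polish groups), and $N$ is $\tau_{\hat{H}}$-dense in $\hat{H}$ by (1), so $\mathcal{O}_n$ is dense open and the rest of your proof goes through unchanged.
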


The sequence of Solecki subgroups $s_{\alpha }\left( G\right) $ for $\alpha
<\omega _{1}$ of the group with a Polish cover $G$ is defined recursively by
setting:

\begin{itemize}
\item $s_{0}\left( G\right) =\overline{\left\{ 0\right\} }^{G}$;

\item $s_{\alpha +1}\left( G\right) =s_{1}\left( s_{\alpha }\left( G\right)
\right) $ for $\alpha <\omega _{1}$;

\item $s_{\lambda }\left( G\right) =\bigcap_{\beta <\lambda }s_{\beta
}\left( G\right) $ for a limit ordinal $\lambda <\omega _{1}$.
\end{itemize}

We also let $s_{\alpha }^{N}(\hat{G})$ be the Polishable subgroup of $\hat{G}
$ such that $s_{\alpha }\left( G\right) =s_{\alpha }^{N}(\hat{G})/N$. One
can prove by induction on $\alpha <\omega _{1}$ that $\left\{ 0\right\} $ is
dense in $s_{\alpha }\left( G\right) $ for every $\alpha <\omega _{1}$, and
if $\left\{ 0\right\} $ is a subgroup with a non-Archimedean Polish cover of 
$G$, then $s_{\alpha }\left( G\right) $ is a subgroup with a non-Archimedean
Polish cover for every $1\leq \alpha <\omega _{1}$; see \cite[Section 4]%
{lupini_complexity_2022}. It is proved in \cite[Theorem 2.1]%
{solecki_polish_1999} that there exists $\alpha <\omega _{1}$ such that $%
s_{\alpha }\left( G\right) =\left\{ 0\right\} $. We call the least countable
ordinal $\alpha $ such that $s_{\alpha }\left( G\right) =\left\{ 0\right\} $
the \emph{Solecki rank} of $G$.

The following is an immediate consequence of \cite[Theorem 5.4]%
{lupini_complexity_2022}.

\begin{theorem}
\label{Theorem:characterize-Solecki}Suppose that $G$ is an abelian group
with a Polish cover, and $\alpha <\omega _{1}$. Then $s_{\alpha }\left(
G\right) $ is the smallest $\boldsymbol{\Pi }_{1+\alpha +1}^{0}$ subgroup of 
$G$.
\end{theorem}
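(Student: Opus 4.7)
The plan is to proceed by transfinite induction on $\alpha$, reducing at each stage to the results of Sections \ref{Section:complexity} and \ref{Section:solecki}. The observation that makes the reduction clean is that, by Definition \ref{Definition:Polishable} and the conventions of Section \ref{Section:complexity}, a Polishable subgroup $H$ of $G = \hat{G}/N$ corresponds to a Polishable subgroup $\hat{H}$ of $\hat{G}$ containing $N$, with matching Borel complexity class; moreover $s_{\alpha}(G) = s_{\alpha}^{N}(\hat{G})/N$ by construction. Hence the assertion is equivalent to the purely Polish-group statement that $s_{\alpha}^{N}(\hat{G})$ is the smallest $\boldsymbol{\Pi}_{1+\alpha+1}^{0}$ Polishable subgroup of $\hat{G}$ containing $N$, which is precisely the content of \cite[Theorem~5.4]{lupini_complexity_2022} once notation is unpacked.

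Were one to reprove the result from scratch, the induction would go as follows. For the base case $\alpha = 0$ we have $s_{0}(G) = \overline{\{0\}}^{G}$, which is closed and hence $\boldsymbol{\Pi}_{2}^{0}$; conversely, any $\boldsymbol{\Pi}_{2}^{0}$ (i.e.\ $G_{\delta}$) Polishable subgroup $H$ is Polish in the subspace topology inherited from $G$, and this subspace topology shares the relative Borel structure with the Polishable topology on $H$, so the uniqueness of Polish group topologies on a fixed standard Borel group forces the two to coincide. Thus $H$ is a Polish subgroup of $G$ in the subspace topology and hence closed, giving $H \supseteq s_{0}(G)$. Limit stages $\lambda$ are handled directly from $s_{\lambda}(G) = \bigcap_{\beta < \lambda} s_{\beta}(G)$, Lemma \ref{Lemma:intersection--Polishable} (for Polishability), and the inductive hypothesis (for both the $\boldsymbol{\Pi}_{1+\lambda}^{0}$ upper bound and minimality, since any $\boldsymbol{\Pi}_{1+\lambda}^{0}$ Polishable subgroup is in particular $\boldsymbol{\Pi}_{1+\beta+1}^{0}$ for every $\beta < \lambda$).

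The main obstacle is the successor step. Writing $s_{\alpha+1}(G) = s_{1}(s_{\alpha}(G))$ and viewing $s_{\alpha}(G)$ as a group with a Polish cover in its own right, Lemma \ref{Lemma:characterize-solecki} identifies $s_{\alpha+1}(G)$ as the smallest $\boldsymbol{\Pi}_{3}^{0}$ Polishable subgroup of $s_{\alpha}(G)$. To convert this \emph{internal} complexity statement inside $s_{\alpha}(G)$ (which carries its own strictly finer Polish group topology) into an \emph{external} statement inside $G$, one needs a calibration lemma asserting that the complexity of a Polishable subgroup of $s_{\alpha}(G)$ as a subset of $G$ is obtained from its complexity inside $s_{\alpha}(G)$ by the ordinal shift dictated by the complexity $\boldsymbol{\Pi}_{1+\alpha+1}^{0}$ of $s_{\alpha}(G)$ in $G$. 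Combined with the inductive hypothesis (which forces every sufficiently small Polishable subgroup of $G$ down into $s_{\alpha}(G)$), this yields both $s_{\alpha+1}(G) \in \boldsymbol{\Pi}_{1+\alpha+2}^{0}(G)$ and the minimality half. This calibration is the genuine technical point and is what \cite[Theorem~5.4]{lupini_complexity_2022} packages; everything else in the induction is bookkeeping.
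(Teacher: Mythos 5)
Your first paragraph is exactly the paper's proof: the paper states the theorem as an immediate consequence of \cite[Theorem~5.4]{lupini_complexity_2022}, via precisely the identification you describe between Polishable subgroups of $G=\hat{G}/N$ (with their complexity classes) and Polishable subgroups of $\hat{G}$ containing $N$, together with $s_{\alpha}(G)=s_{\alpha}^{N}(\hat{G})/N$. One caveat on your supplementary ``from scratch'' sketch: the limit-stage minimality does not follow as written, since a $\boldsymbol{\Pi}_{1+\lambda+1}^{0}$ Polishable subgroup need not be $\boldsymbol{\Pi}_{1+\beta+1}^{0}$ for $\beta<\lambda$ --- the inclusion of complexity classes runs in the opposite direction to the one you invoke --- so the inductive hypothesis cannot be applied directly there; this is part of what the cited theorem actually has to handle. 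Since your operative argument is the citation, as is the paper's, the proof stands as given.
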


As a consequence of Proposition \ref{Proposition:complexity-preimage} and
Theorem \ref{Theorem:characterize-Solecki} one obtains the following.

\begin{theorem}
Fix $\alpha <\omega _{1}$. If $f:G\rightarrow H$ is a Borel-definable
homomorphism between groups with a Polish cover, then $f$ maps $s_{\alpha
}\left( G\right) $ to $s_{\alpha }\left( H\right) $. Thus, $G\mapsto
s_{\alpha }\left( G\right) $ is a \emph{subfunctor of the identity }on the
category of abelian groups with a (non-Archimedean) Polish cover.
\end{theorem}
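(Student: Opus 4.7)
The plan is to combine the minimality characterization of $s_{\alpha}(G)$ as the smallest $\boldsymbol{\Pi}_{1+\alpha+1}^{0}$ Polishable subgroup of $G$ (Theorem~\ref{Theorem:characterize-Solecki}) with the pullback behavior of Borel complexity classes under Borel-definable homomorphisms (Proposition~\ref{Proposition:complexity-preimage}). First I would observe that, by Theorem~\ref{Theorem:characterize-Solecki}, the Polishable subgroup $s_{\alpha}(H)$ of $H$ lies in $\boldsymbol{\Pi}_{1+\alpha+1}^{0}(H)$; applying Proposition~\ref{Proposition:preimage}, I get that $f^{-1}(s_{\alpha}(H))$ is a Polishable subgroup of $G$, and Proposition~\ref{Proposition:complexity-preimage} (applied to the class $\boldsymbol{\Pi}_{1+\alpha+1}^{0}$) then guarantees that this preimage also belongs to $\boldsymbol{\Pi}_{1+\alpha+1}^{0}(G)$. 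Invoking once again the minimality clause of Theorem~\ref{Theorem:characterize-Solecki} would force $s_{\alpha}(G) \subseteq f^{-1}(s_{\alpha}(H))$, which is exactly $f(s_{\alpha}(G)) \subseteq s_{\alpha}(H)$.

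To upgrade this inclusion to a subfunctor of the identity, I would define $s_{\alpha}(f)$ as the restriction $f|_{s_{\alpha}(G)} : s_{\alpha}(G) \to s_{\alpha}(H)$. Writing $G = \hat{G}/N$ and $H = \hat{H}/M$, any Borel lift $\hat{f} : \hat{G} \to \hat{H}$ of $f$ restricts to a Borel function $s_{\alpha}^{N}(\hat{G}) \to s_{\alpha}^{M}(\hat{H})$ by the first paragraph, yielding a Borel lift of $s_{\alpha}(f)$. The axioms $s_{\alpha}(\mathrm{id}) = \mathrm{id}$ and $s_{\alpha}(g \circ f) = s_{\alpha}(g) \circ s_{\alpha}(f)$ are then immediate, and naturality of the family of inclusions $s_{\alpha}(G) \hookrightarrow G$ is built into the definition. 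For the non-Archimedean version I would appeal to the fact already recorded in Section~\ref{Section:solecki} that the Solecki construction preserves the property of having a non-Archimedean Polish cover, with the limit step handled by Lemma~\ref{Lemma:intersection--non-A}.

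The main obstacle is really just careful bookkeeping: one must confirm that the class $\boldsymbol{\Pi}_{1+\alpha+1}^{0}$ falls within the list of complexity classes for which Proposition~\ref{Proposition:complexity-preimage} applies, which it does for every countable $\alpha$. Once this is in place, the rest follows directly from minimality together with the pullback-stability of the relevant complexity class.
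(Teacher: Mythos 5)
Your argument is correct and is exactly the one the paper intends: the theorem is stated there as an immediate consequence of Proposition~\ref{Proposition:complexity-preimage} and Theorem~\ref{Theorem:characterize-Solecki}, i.e.\ $f^{-1}(s_{\alpha}(H))$ is a $\boldsymbol{\Pi}_{1+\alpha+1}^{0}$ Polishable subgroup of $G$ and therefore contains the smallest such subgroup $s_{\alpha}(G)$. Your additional bookkeeping on lifts and functoriality is routine and matches what the paper leaves implicit.
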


We also have the following consequence of \cite[Theorem 6.1]%
{lupini_complexity_2022}.

\begin{theorem}
\label{Theorem:complexity-Solecki}Suppose that $G=\hat{G}/N$ is an abelian
group with a Polish cover. Let $\alpha =\lambda +n$ be the Solecki rank of $%
G $, where $\lambda <\omega _{1}$ is either zero or a limit ordinal and $%
n<\omega $.

\begin{enumerate}
\item Suppose that $n=0$.\ Then $\boldsymbol{\Pi }_{1+\lambda }^{0}$ is the
complexity class of $\left\{ 0\right\} $ in $G$;

\item Suppose that $n\geq 1$. Then:

\begin{enumerate}
\item if $\left\{ 0\right\} \in \boldsymbol{\Pi }_{3}^{0}\left( s_{\lambda
+n-1}\left( G\right) \right) $ and $\left\{ 0\right\} \notin D(\boldsymbol{%
\Pi }_{2}^{0})\left( s_{\lambda +n-1}\left( G\right) \right) $, then $%
\boldsymbol{\Pi }_{1+\lambda +n+1}^{0}$ is the complexity class of $\left\{
0\right\} $ in $G$;

\item if $n\geq 2$ and $\left\{ 0\right\} \in D(\boldsymbol{\Pi }%
_{2}^{0})(s_{\lambda +n-1}\left( G\right) )$, then $D(\boldsymbol{\Pi }%
_{1+\lambda +n}^{0})$ is the complexity class of $\left\{ 0\right\} $ in $G$;

\item if $n=1$, $\left\{ 0\right\} \in D(\boldsymbol{\Pi }_{2}^{0})\left(
s_{\lambda }\left( G\right) \right) $, and $\left\{ 0\right\} \notin 
\boldsymbol{\Sigma }_{2}^{0}\left( s_{\lambda }\left( G\right) \right) $,
then $D(\boldsymbol{\Pi }_{1+\lambda +1}^{0})$ is the complexity class of $%
\left\{ 0\right\} $ in $G$;

\item if $n=1$ and $\left\{ 0\right\} \in \boldsymbol{\Sigma }_{2}^{0}\left(
s_{\lambda }\left( G\right) \right) $, then $\boldsymbol{\Sigma }_{1+\lambda
+1}^{0}$ is the complexity class of $\left\{ 0\right\} $ in $G$.
\end{enumerate}
\end{enumerate}

Furthermore, if $\left\{ 0\right\} $ is a subgroup of $G$ with a
non-Archimedean Polish cover, then the case \emph{(2c)} is excluded.
\end{theorem}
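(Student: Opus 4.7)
The plan is to deduce this as an immediate reformulation of \cite[Theorem 6.1]{lupini_complexity_2022}, via the earlier results of this section, by first translating the statement to the underlying Polish covers. Concretely, the complexity of $\{0\}$ in a group with a Polish cover $H = \hat{H}/K$ equals by definition the complexity of $K$ in $\hat{H}$. Writing $s_\alpha(G) = s_\alpha^N(\hat{G})/N$, the Solecki rank $\lambda + n$ of $G$ is encoded by the equality $N = s_{\lambda+n}^N(\hat{G})$ together with $N \subsetneq s_\beta^N(\hat{G})$ for every $\beta < \lambda + n$.

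For the upper bound in case (1) (when $n = 0$), Theorem \ref{Theorem:characterize-Solecki} combined with the recursive definition $s_\lambda(G) = \bigcap_{\beta<\lambda} s_\beta(G)$ yields that $N$ is the intersection of countably many $\boldsymbol{\Pi}_{1+\beta+1}^0$ subgroups, hence $\boldsymbol{\Pi}_{1+\lambda}^0$ in $\hat{G}$. In cases (2a)--(2d), Theorem \ref{Theorem:characterize-Solecki} gives at worst the bound $\boldsymbol{\Pi}_{1+\lambda+n+1}^0$; the refinements to $D(\boldsymbol{\Pi}_{1+\lambda+n}^0)$, respectively $\boldsymbol{\Sigma}_{1+\lambda+1}^0$, in cases (2b), (2c), (2d) follow by combining this with the fact that $N$ sits as a Polishable subgroup of $s_{\lambda+n-1}^N(\hat{G})$ of the prescribed Baire complexity, together with standard Baire-hierarchy arithmetic for the relative complexity of $N$ in $\hat{G}$.

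The lower bounds---that the stated class is best possible---constitute the heart of the argument and the main obstacle. They follow from \cite[Theorem 6.1]{lupini_complexity_2022}, whose proof is a Baire-category argument in the Polish group topology of $s_{\lambda+n-1}^N(\hat{G})$ making essential use of the density of $\{0\}$ in each $s_\beta(G)$ and of Glimm--Effros and Hjorth-style dichotomies of the sort recorded in Proposition \ref{Proposition:complexity-equality}. One has to rule out each candidate complexity class from Proposition \ref{Proposition:complexity-classes} strictly below the stated one, working upward along the Solecki chain; this is the delicate case analysis performed in the cited theorem.

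Finally, the non-Archimedean exclusion of case (2c) is immediate from Proposition \ref{Proposition:non-Archimedean-complexity-classes}. Case (2c) requires the complexity class of $\{0\}$ in $s_\lambda(G)$ to be exactly $D(\boldsymbol{\Pi}_2^0)$; when $\{0\}$ is a non-Archimedean Polishable subgroup of $G$, the same property is inherited by $\{0\}$ viewed inside $s_\lambda(G)$, and the class $D(\boldsymbol{\Pi}_2^0)$ does not appear in the non-Archimedean list of Proposition \ref{Proposition:non-Archimedean-complexity-classes}, so this subcase cannot occur.
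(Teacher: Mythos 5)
Your proposal is correct and takes essentially the same route as the paper, which offers no argument beyond observing that the statement is an immediate consequence of \cite[Theorem 6.1]{lupini_complexity_2022}; your translation to the underlying Polish covers and your deferral of the lower bounds to that cited theorem is exactly the intended reading. Your derivation of the non-Archimedean exclusion of case (2c) from Proposition \ref{Proposition:non-Archimedean-complexity-classes} (noting that $D(\boldsymbol{\Pi}_{2}^{0})$ is absent from the non-Archimedean list) is also the natural justification consistent with the paper's framing.
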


\subsection{Ulm subgroups\label{Section:Ulm}}

Suppose that $G=\hat{G}/N$ is an abelian group with a Polish cover. The
first Ulm subgroup $u_{1}\left( G\right) =G^{1}$ is the subgroup $%
\bigcap_{n>0}nG$ of $G$. One then defines the sequence $u_{\alpha }\left(
G\right) $ of Ulm subgroups by recursion on $\alpha <\omega _{1}$ by setting:

\begin{enumerate}
\item $u_{0}\left( G\right) =G$;

\item $u_{\alpha +1}\left( G\right) =u_{1}\left( u_{\alpha }\left( G\right)
\right) $;

\item $u_{\lambda }\left( G\right) =\bigcap_{\alpha <\lambda }u_{\alpha
}\left( G\right) $ for $\lambda $ limit.
\end{enumerate}

It follows from Lemma \ref{Lemma:intersection--Polishable} by induction on $%
\alpha <\omega _{1}$ that $u_{\alpha }\left( G\right) $ is a subgroup of $G$
with a Polish cover, which is non-Archimedean if $G$ is a abelian group with
a non-Archimedean Polish cover; see Definition \ref{Definition:non-A}.

We let $D\left( G\right) $ be the largest divisible subgroup of $G$. A group 
$G$ is \emph{reduced }if $D\left( G\right) =0$. Notice that $D\left(
G\right) $ is a Polishable subgroup of $G$, which is non-Archimedean when $G$
is a abelian group with a non-Archimedean Polish cover. Indeed, $D\left(
G\right) =\hat{D}\left( G\right) /N$, where $\hat{D}\left( G\right) $ is the
image of the Polish group%
\begin{equation*}
H=\left\{ \left( g_{n},r_{n}\right) _{n\in \omega }:\forall n\in \omega
,g_{n}-\left( n+1\right) g_{n+1}=r_{n}\right\} \subseteq \prod_{n\in \omega }%
\hat{G}\oplus N
\end{equation*}%
under a continuous group homomorphism. The same proof as \cite[Theorem 4.1(i)%
]{farah_borel_2006} gives the following.

\begin{proposition}
\label{Lemma:Ulm}Suppose that $G=\hat{G}/N$ is a group with a Polish cover.
Then there exists $\alpha <\omega _{1}$ such that $u_{\alpha }\left(
G\right) =D\left( G\right) $.
\end{proposition}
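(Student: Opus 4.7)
The plan is to establish three things: (i) each $u_\alpha(G)$ is a Polishable subgroup of $G$, (ii) the weakly decreasing transfinite sequence $(u_\alpha(G))_{\alpha<\omega_1}$ stabilizes at some countable ordinal, and (iii) the stable value coincides with $D(G)$.

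For (i), I proceed by transfinite induction. Multiplication by $n$ is a continuous, hence liftable and in particular Borel-definable, endomorphism of $G$, so if $u_\alpha(G)$ is Polishable then Proposition \ref{Proposition:preimage}(2) ensures that $n\cdot u_\alpha(G)$ is Polishable for each $n\geq 1$. Lemma \ref{Lemma:intersection--Polishable} then gives that $u_{\alpha+1}(G)=\bigcap_{n\geq 1}n\cdot u_\alpha(G)$ is Polishable; the limit step follows from the same lemma applied to $\bigcap_{\beta<\lambda}u_\beta(G)$.

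For (iii), at any $\alpha$ with $u_{\alpha+1}(G)=u_\alpha(G)$, the defining identity $u_\alpha(G)=\bigcap_{n\geq 1}n\cdot u_\alpha(G)$ forces $u_\alpha(G)=n\cdot u_\alpha(G)$ for every $n\geq 1$, so $u_\alpha(G)$ is divisible and therefore contained in $D(G)$. Conversely, a routine transfinite induction using that $D(G)$ is itself divisible (so that $D(G)=n\,D(G)\subseteq n\cdot u_\alpha(G)$ at the successor step, and intersections at limit steps) yields $D(G)\subseteq u_\alpha(G)$ for every countable $\alpha$, so equality holds at the stabilization point. Note that once $u_\alpha(G)=u_{\alpha+1}(G)$ the sequence is constant from $\alpha$ onwards (since $u_{\alpha+2}(G)=u_1(u_{\alpha+1}(G))=u_1(u_\alpha(G))=u_{\alpha+1}(G)$), so it suffices to rule out strict descent at every successor step.

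The main obstacle is (ii): ruling out a strictly decreasing $\omega_1$-chain of Polishable subgroups of the fixed abelian group with a Polish cover $G$. My plan here is to transcribe the argument of Farah--Solecki \cite[Theorem 4.1(i)]{farah_borel_2006} to the present context. That argument exploits, on the one hand, uniqueness of the Polish group topology on a Polishable subgroup (an instance of Pettis's automatic continuity theorem for Borel group homomorphisms between Polish groups), and on the other hand a rank/absoluteness bookkeeping argument showing that a strictly refining transfinite chain of Polish group topologies on subgroups of a fixed Polish group must be countable. Every ingredient used there is available here by the results of Sections \ref{Section:polishable-subgroup}--\ref{Section:solecki}: Polishability is preserved under continuous homomorphic images (Proposition \ref{Proposition:preimage}) and countable intersections (Lemma \ref{Lemma:intersection--Polishable}), and each Polishable subgroup $\hat u_\alpha(G)$ of $\hat G$ carries its canonical Polish group topology. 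This is the only delicate step, and the reason the paper simply invokes the Farah--Solecki argument rather than reproducing it.
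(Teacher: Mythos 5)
Your steps (i) and (iii) are fine and match the paper's setup: Polishability of each $u_{\alpha }\left( G\right) $ is exactly what Lemma \ref{Lemma:intersection--Polishable} gives by induction, and the observation that a fixed point of $u_{1}$ is divisible (hence contained in $D\left( G\right) $) while $D\left( G\right) \subseteq u_{\alpha }\left( G\right) $ for every $\alpha $ correctly identifies the stable value. The genuine gap is step (ii), which is the entire content of the proposition. You reduce it to the assertion that a strictly refining transfinite chain of Polish group topologies on subgroups of a fixed Polish group must be countable, and attribute this to Farah--Solecki. No such general principle is available, and as stated it is false: for instance, taking an $\omega _{1}$-sequence of weights $w_{\alpha }:\omega \rightarrow \lbrack 1,\infty )$ with $w_{\alpha +1}/w_{\alpha }\rightarrow \infty $ and $w_{\lambda }$ eventually dominating all earlier weights at limits, the weighted spaces $\ell _{1}\left( w_{\alpha }\right) $ form a strictly decreasing $\omega _{1}$-chain of Polishable subgroups of $\mathbb{R}^{\omega }$. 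So stabilization cannot be deduced from the mere fact that the $u_{\alpha }\left( G\right) $ are a decreasing chain of Polishable subgroups; it must use the divisibility structure that defines them.

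That is what the paper's proof (following \cite[Theorem 4.1(i)]{farah_borel_2006}) actually does, and it is not the argument you describe: uniqueness of Polish group topologies plays no role. After replacing $G$ by $G/D\left( G\right) $ so that $D\left( G\right) =0$, one defines the Borel relation $\prec $ on finite sequences from $B=\hat{G}\setminus N$ recording chains of successive divisions $\left( i+1\right) g_{i+1}\equiv g_{i}\ \mathrm{mod}\ N$. An infinite $\prec $-descending chain would generate a nonzero divisible subgroup, so $D\left( G\right) =0$ forces $\prec $ to be well-founded, and the boundedness theorem for ranks of well-founded Borel (indeed $\boldsymbol{\Sigma }_{1}^{1}$) relations \cite[Theorem 31.1]{kechris_classical_1995} gives $\rho \left( \prec \right) <\omega _{1}$. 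A derivative computation then identifies the $\omega \alpha $-th derived set of $B^{<\omega }$ with the set of sequences whose last entry lies over $u_{\alpha }\left( G\right) $, whence $u_{\lambda }\left( G\right) =0$ for any countable $\lambda $ with $\omega \lambda \geq \rho \left( \prec \right) $. To complete your proof you would need to supply this (or an equivalent) rank argument tied to the Ulm filtration itself; deferring it to a purported general chain condition on Polishable subgroups does not work.
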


\begin{proof}
After replacing $G$ with $G/D\left( G\right) $, we can assume that $D\left(
G\right) =\left\{ 0\right\} $. Set $B:=\hat{G}\setminus N$. Define the Borel
relation $\prec $ on the standard Borel space $B^{<\omega }$ of finite
sequences in $B$ by setting%
\begin{equation*}
\left( g_{0},\ldots ,g_{n}\right) \prec \left( h_{0},\ldots ,h_{m}\right)
\Leftrightarrow m<n\text{ and }\forall i\leq m\text{, }h_{i}=g_{i}\text{,
and }\forall i\leq n\text{, }\left( i+1\right) g_{i+1}\equiv g_{i}\mathrm{\ 
\mathrm{mod}}\ N\text{.}
\end{equation*}%
Then $\prec $ is well-founded, and hence its rank $\rho \left( \prec \right) 
$ is a countable ordinal \cite[Theorem 31.1]{kechris_classical_1995}. For $%
A\subseteq B^{<\omega }$ define 
\begin{equation*}
A^{\prime }=\left\{ s\in B^{<\omega }:\exists t\in A,t\prec s\right\} \text{.%
}
\end{equation*}%
Set then recursively $A_{0}=B^{<\omega }$, $A_{\alpha +1}=A_{\alpha
}^{\prime }$, and $A_{\lambda }=\bigcap_{\alpha <\lambda }A_{\alpha }$ for $%
\lambda $ limit. Then it is easily proved by induction that, for every
ordinal $\alpha $ and $s=\left( g_{0},\ldots ,g_{n}\right) \in B^{<\omega }$:

\begin{itemize}
\item $\rho _{\prec }\left( s\right) \geq \alpha $ if and only if $s\in
A_{\alpha }$;

\item $s\in A_{\omega \alpha }\Leftrightarrow g_{n}+N\in u_{\alpha }\left(
G\right) $.
\end{itemize}

Thus, if $\lambda $ is a countable ordinal such that $\rho \left( \prec
\right) \leq \omega \lambda $, then we have that $\rho _{\prec }\left(
s\right) <\omega \lambda $ for every $s\in B^{<\omega }$, and hence $%
A_{\omega \lambda }=\varnothing $ and $u_{\lambda }\left( G\right) =\left\{
0\right\} $.
\end{proof}

Suppose that $G$ is an abelian group with a Polish cover. The \emph{Ulm rank}
of $G$ the least $\alpha <\omega _{1}$ such that $u_{\alpha }\left( G\right)
=D\left( G\right) $. Notice that such an $\alpha $ exists by Proposition \ref%
{Lemma:Ulm}.

\subsection{Polish modules\label{Subsection:modules}}

In this section, we observe how all the results that we have obtained so far
apply more generally in the context of Polish $G$-modules.

Suppose that $G$ is a (multiplicatively denoted) Polish group, and $R$ is a
Polish ring. We say that $A$ is a Polish $G$-module if $A$ is an abelian
Polish group endowed with a continuous action $G\curvearrowright A$ by
automorphism of $A$, denoted by $\left( g,a\right) \mapsto g\cdot a$ \cite[%
Section 3]{moore_group_1976}. We say that $A$ is a Polish $R$-module if it
is an abelian Polish group that is also an $R$-module, such that the scalar
multiplication operation is continuous. We now recall some automatic
continuity results for modules; see also \cite[Proposition 11]%
{moore_group_1976}.

The following lemma guarantees that a separately continuous function is
continuous on a large set; see \cite[Theorem 8.51]{kechris_classical_1995}.

\begin{lemma}
\label{Lemma:separately-continuous}Let $X,Y,Z$ be Polish spaces and $%
f:X\times Y\rightarrow Z$ be a function that is separately continuous. Then
there exists a dense $G_{\delta }$ set $C\subseteq X\times Y$ such that, for
all $y\in Y$, $C^{y}=\left\{ x\in X:\left( x,y\right) \in C\right\} $ is a
dense $G_{\delta }$ in $X$, and $f$ is continuous at every point of $C$.
\end{lemma}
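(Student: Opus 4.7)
The plan is to take $C$ to be the set of joint continuity points of $f$, since then $C$ being $G_\delta$ and each section $C^y$ being $G_\delta$ are automatic, and continuity of $f$ on $C$ is tautological, leaving only density to verify. Explicitly, writing $\omega_f(x,y)$ for the oscillation of $f$ at $(x,y)$ measured with a compatible metric $d_Z$ on $Z$, we have $C = \bigcap_{n \geq 1} U_n$ with $U_n := \{(x,y) : \omega_f(x,y) < 1/n\}$ open in $X \times Y$, so $C$ is $G_\delta$ and each section $C^y = \bigcap_n U_n^y$ is a countable intersection of open subsets of $X$, hence $G_\delta$ in $X$. Moreover, density of every section $C^y$ in $X$ will imply density of $C$ in $X \times Y$, because any basic open box $V \times W$ meets $C$ at any point $(x,y)$ with $y \in W$ and $x \in V \cap C^y$. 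So it suffices to show that each $C^y$ is dense in $X$.

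To prove this I fix $y_0 \in Y$, a nonempty open $V \subseteq X$, and $\epsilon > 0$, and produce $x \in V$ with $\omega_f(x, y_0) < \epsilon$. Fix a compatible metric $d_Y$ on $Y$, and for each $k \in \omega$ let
\[
A_k = \{x \in V : d_Z(f(x, y), f(x, y_0)) \leq \epsilon/8 \text{ for every } y \in \overline{B}_{d_Y}(y_0, 1/k)\}.
\]
Choosing a countable dense subset of $\overline{B}_{d_Y}(y_0, 1/k)$ and using continuity of the sections $f(\cdot, y)$ for each such $y$ shows that $A_k$ is closed in $V$. On the other hand, for every $x \in V$ the section $f(x, \cdot)$ is continuous at $y_0$, so $x \in A_k$ for some $k$; hence $V = \bigcup_k A_k$. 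As $V$ is an open subset of the Polish space $X$ and therefore a Baire space, some $A_k$ has nonempty interior $V'$ in $V$.

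Pick $x_0 \in V'$ and use continuity of $f(\cdot, y_0)$ to shrink $V'$ to an open neighborhood $V'' \subseteq V'$ of $x_0$ on which $f(\cdot, y_0)$ oscillates by less than $\epsilon/8$. Then for any $(x, y) \in V'' \times B_{d_Y}(y_0, 1/k)$ the triangle inequality gives $d_Z(f(x,y), f(x_0, y_0)) < \epsilon/4$, so
\[
\operatorname{diam}\bigl(f(V'' \times B_{d_Y}(y_0, 1/k))\bigr) \leq \epsilon/2 < \epsilon,
\]
proving $\omega_f(x_0, y_0) < \epsilon$. Hence $x_0 \in V \cap C^{y_0}$, establishing density.

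The only genuinely non-routine step is the Baire category argument in the second paragraph, which is the standard device for extracting joint continuity from separate continuity; everything else amounts to bookkeeping with the oscillation function and with the sections of $f$.
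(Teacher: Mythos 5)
Your proof is correct and is the standard oscillation-plus-Baire-category argument; the paper itself gives no proof of this lemma and simply cites \cite[Theorem 8.51]{kechris_classical_1995}, whose proof is of exactly this type, so you are not taking a genuinely different route. Every step checks out: $C=\bigcap_n U_n$ with $U_n=\{\omega_f<1/n\}$ open, the reduction of density of $C$ to density of the sections, the closedness of the sets $A_k$ (where one could even skip the countable dense subset, since an arbitrary intersection of closed sets is closed), the covering $V=\bigcup_k A_k$ from continuity of $f(x,\cdot)$ at $y_0$, and the diameter estimate. The only blemish is your last sentence: producing, for each $\epsilon>0$ and each nonempty open $V$, a point $x_0\in V$ with $\omega_f(x_0,y_0)<\epsilon$ shows that each open set $U_n^{y_0}$ is dense in $X$, not that this particular $x_0$ lies in $C^{y_0}$; you need one more application of the Baire category theorem in $X$ to conclude that $C^{y_0}=\bigcap_n U_n^{y_0}$ is dense. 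Since everything required for that final application is already on the page, this is a one-line fix rather than a gap.
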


As an application of the automatic continuity of Borel group homomorphism
between Polish groups \cite[Theorem 9.10]{kechris_classical_1995} and Lemma %
\ref{Lemma:separately-continuous} one has the following.

\begin{lemma}
\label{Lemma:continuous-G-module}Suppose that $G$ is a Polish group, and $A$
is an abelian Polish group. If an action $G\curvearrowright A$ by
automorphisms of $A$ is Borel separately in each variable when seen as a
function $G\times A\rightarrow A$, then it is continuous.
\end{lemma}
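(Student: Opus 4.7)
The plan is to verify separate continuity of the action map $\alpha \colon G \times A \to A$, $\alpha(g,a) = g\cdot a$, extract a point of joint continuity via Lemma~\ref{Lemma:separately-continuous}, and then upgrade to joint continuity everywhere using the invariance of $\alpha$ under translations in $G$ and $A$.

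For continuity in the $A$-variable, I would observe that for each fixed $g \in G$, the map $\alpha_g \colon A \to A$ is a group automorphism of the Polish abelian group $A$, and it is Borel as the composition of $a \mapsto (g,a)$ with the Borel action $\alpha$. By the automatic continuity theorem \cite[Theorem 9.10]{kechris_classical_1995}, $\alpha_g$ is therefore continuous. For continuity in the $G$-variable, I would fix $a \in A$ and consider the orbit map $\alpha^a \colon G \to A$, which is Borel but not a group homomorphism, so \cite[Theorem 9.10]{kechris_classical_1995} cannot be applied directly. However, the cocycle identity $\alpha^a(gh) = \alpha_g(\alpha^a(h))$, combined with Step~1, shows that continuity of $\alpha^a$ at a single point of $G$ propagates to continuity at every point of $G$. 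To obtain continuity at the identity $1 \in G$, I would run a Pettis-style argument: for a given neighborhood $U$ of $0 \in A$ choose a neighborhood $W$ with $W - W \subseteq U$, form the Borel set $B = \{ g \in G : g \cdot a - a \in W \}$ containing $1$, and exploit the cocycle identity together with the continuity of each $\alpha_g$ to show that $B$ contains a $G$-neighborhood of the identity.

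With separate continuity in hand, Lemma~\ref{Lemma:separately-continuous} produces a dense $G_\delta$ set $C \subseteq G \times A$ of points at which $\alpha$ is jointly continuous. The set of joint continuity points of $\alpha$ is invariant under the action of $G \times A$ on itself by $(h,b) \cdot (g,a) = (hg, a+b)$: invariance under left translation by $h$ in the first coordinate follows from $\alpha(hg, a) = \alpha_h(\alpha(g,a))$ and Step~1, while invariance under translation by $b$ in the second coordinate follows from $\alpha(g, a+b) = \alpha(g,a) + \alpha^b(g)$ and Step~2. Since $G \times A$ acts transitively on itself and $C$ is nonempty, $\alpha$ must be continuous at every point of $G \times A$, as required.

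The main obstacle is the second step: the orbit map $\alpha^a$ is only Borel and is not a homomorphism, so the standard automatic-continuity result for Borel homomorphisms does not apply. The delicate Baire-category argument has to replace the multiplicativity used in the usual Pettis-style proof by the cocycle identity $\alpha^a(gh) = \alpha_g(\alpha^a(h))$ and the continuity of each $\alpha_g$ obtained in Step~1; carrying this out cleanly is the main technical content of the proof.
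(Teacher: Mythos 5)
Your Steps 1 and 3 are essentially the paper's (the paper's version of Step 3 only translates in the $G$-coordinate, since the dense $G_\delta$ set $C$ from Lemma~\ref{Lemma:separately-continuous} has every section $C^{a}$ dense $G_\delta$ in $G$; your two-sided translation also works once separate continuity is available). The genuine gap is in Step 2, precisely the step you flag as the main technical content: the Pettis-style argument does not close. First, there is no a priori reason the set $B=\{g\in G: g\cdot a-a\in W\}$ is non-meager; the standard way to produce non-meagerness is to cover $G$ by the countably many level sets $C_i=\{g: g\cdot a\in b_i+W\}$ for $b_i$ dense in $A$. But for $g,h\in C_i$ the cocycle identity only gives
\[
\left(h^{-1}g\right)\cdot a-a \;=\; h^{-1}\cdot\left(g\cdot a\right)-h^{-1}\cdot\left(h\cdot a\right)\;=\;\alpha_{h^{-1}}\left(g\cdot a-h\cdot a\right)\;\in\;\alpha_{h^{-1}}\left(W-W\right)\text{,}
\]
and the automorphisms $\alpha_{h^{-1}}$, as $h$ ranges over a non-meager set, are not known to be equicontinuous, so you cannot force this into a prescribed neighborhood $U$ of $0$. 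Relatedly, you cannot start your propagation step from ``continuity at one point'': Borelness gives continuity of the \emph{restriction} of the orbit map to a comeager set, but (as with the indicator function of $\mathbb{Q}$) a Borel map may be continuous at no point of the ambient space.

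The paper's Step 2 avoids Pettis entirely. It takes a dense $G_\delta$ set $D\subseteq G$ on which $\varepsilon_{a_0}:g\mapsto g\cdot a_0$ restricts to a continuous map \cite[Theorem 8.38]{kechris_classical_1995}; given $g_n\rightarrow g_\infty$, it chooses $g_0$ in the comeager set $Dg_\infty^{-1}\cap\bigcap_{n}Dg_n^{-1}$, so that all $g_0g_n$ (including $n=\infty$) lie in $D$ and hence $g_0g_n\cdot a_0\rightarrow g_0g_\infty\cdot a_0$ by continuity of $\varepsilon_{a_0}|_{D}$, and then applies the single continuous automorphism $\alpha_{g_0^{-1}}$ to conclude $g_n\cdot a_0\rightarrow g_\infty\cdot a_0$. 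This is the same ``translate and use Step 1'' idea as your propagation step, but applied to continuity of the restriction along a given convergent sequence rather than to continuity at a point, which is exactly what makes it work. Replace your Pettis sketch with this argument and the rest of your proof goes through.
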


\begin{proof}
For $g_{0}\in G$, the map $A\rightarrow A$, $a\mapsto g_{0}\cdot a$ is a
Borel automorphism of $A$, and hence continuous \cite[Therem 9.10]%
{kechris_classical_1995}. For $a_{0}\in A$ we have that the map $\varepsilon
_{a_{0}}:G\rightarrow A$, $g\mapsto g\cdot a_{0}$ is Borel. Thus there
exists a dense $G_{\delta }$ subset $D$ of $G$ such that $\varepsilon
_{a_{0}}|_{D}$ is continuous \cite[Theorem 8.38]{kechris_classical_1995}. We
now prove that $\varepsilon _{a_{0}}$ is continuous at an arbitrary $%
g_{\infty }\in G$. Suppose that $\left( g_{n}\right) _{n\in \mathbb{N}}$ is
a sequence converging to $g_{\infty }$. Consider $h\in Dg_{\infty }^{-1}\cap
\bigcap_{n\in \mathbb{N}}Dg_{n}^{-1}$ Thus, $hg_{n}\in D$ for every $n\in 
\mathbb{N}\cup \left\{ \infty \right\} $, and $hg_{n}\rightarrow hg_{\infty
} $. Thus, $\left( hg_{n}\cdot a_{0}\right) _{n\in \mathbb{N}}$ converges to 
$hg_{\infty }\cdot a_{0}$. Since the function $a\mapsto h^{-1}\cdot a$ is a
continuous automorphism of $A$, we have that $\left( g_{n}\cdot a_{0}\right)
_{n\in \mathbb{N}}$ converges to $g_{\infty }\cdot a_{0}$. This shows that $%
\varepsilon _{a_{0}}$ is continuous at $g_{\infty }$.

Finally, by the above and Lemma \ref{Lemma:separately-continuous} there
exists a dense $G_{\delta }$ subset $C$ of $G\times A$ such that, for every $%
a\in A$, $C^{a}$ is a dense $G_{\delta }$ subset of $G$ and the function $%
\left( g,a\right) \mapsto g\cdot a$ is continuous at every point of $C$. Fix
now $\left( g_{0},a_{0}\right) \in G\times A$. Let $h_{0}\in G$ such that $%
m:\left( g,a\right) \mapsto g\cdot a$ is continuous at $\left(
h_{0},a_{0}\right) $. Then we can write the function $m$ as $\left(
g,a\right) \mapsto \left( g_{0}h_{0}^{-1}\right) \cdot \left(
h_{0}g_{0}^{-1}g\cdot a\right) $. This realizes $m$ as a composition of a
function that is continuous at $\left( g_{0},a_{0}\right) $ with a
continuous function.\ Thus, $m$ is continuous at $\left( g_{0},a_{0}\right) $%
. Since $\left( g_{0},a_{0}\right) $ is an arbitrary element of $G\times A$,
we have that $m$ is continuous.
\end{proof}

\begin{lemma}
\label{Lemma:continuous-R-module}Suppose that $R$ is a Polish ring, and $A$
is an $R$-module and an abelian Polish group. If the scalar multiplication
operation $R\times A\rightarrow A$, $\left( \lambda ,x\right) \mapsto
\lambda \cdot x$ is Borel separately in each variable, then it is continuous.
\end{lemma}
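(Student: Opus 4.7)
The plan is to adapt the argument of Lemma \ref{Lemma:continuous-G-module}, using the additive group structure on $R$ in place of the multiplicative translation trick used there. The first step I would carry out is to establish that $m : R \times A \to A$, $(\lambda, x) \mapsto \lambda \cdot x$, is separately continuous. For each fixed $\lambda_0 \in R$, the map $x \mapsto \lambda_0 \cdot x$ is, by the $R$-module axioms, a Borel group endomorphism of the abelian Polish group $A$, hence continuous by automatic continuity for Borel homomorphisms between Polish groups \cite[Theorem 9.10]{kechris_classical_1995}. For each fixed $x_0 \in A$, distributivity over addition in $R$ makes $\lambda \mapsto \lambda \cdot x_0$ a Borel group homomorphism from the additive Polish group $(R,+)$ to $A$, which is again continuous by the same theorem. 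Note that this second step is actually simpler than in the group case, since distributivity immediately provides a genuine homomorphism, whereas in the group setting this required a Baire-category argument.

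With separate continuity in hand, I would invoke Lemma \ref{Lemma:separately-continuous} to obtain a dense $G_\delta$ subset $C \subseteq R \times A$ on which $m$ is continuous, and such that for every $x \in A$ the fiber $C^x = \{\lambda \in R : (\lambda, x) \in C\}$ is a dense $G_\delta$ in $R$. The final step is to upgrade continuity from $C$ to all of $R \times A$. Given an arbitrary $(\lambda_0, x_0) \in R \times A$, I would choose $\mu_0 \in R$ with $\lambda_0 - \mu_0 \in C^{x_0}$; such a $\mu_0$ exists because $C^{x_0}$ is a dense $G_\delta$, hence nonempty. Then $m$ is continuous at $(\lambda_0 - \mu_0, x_0)$, and the distributive identity
\begin{equation*}
m(\lambda, x) = (\lambda - \mu_0) \cdot x + \mu_0 \cdot x
\end{equation*}
expresses $m$ as the sum of $m \circ T$, where $T(\lambda, x) := (\lambda - \mu_0, x)$ is a homeomorphism of $R \times A$ sending $(\lambda_0, x_0)$ to $(\lambda_0 - \mu_0, x_0)$, and the map $(\lambda, x) \mapsto \mu_0 \cdot x$, which depends only on $x$ and is continuous by the first step. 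Both summands are continuous at $(\lambda_0, x_0)$, so $m$ is continuous there; since $(\lambda_0, x_0)$ was arbitrary, $m$ is continuous on $R \times A$.

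The main obstacle I anticipate is identifying the correct translation in the last step: unlike the group setting, where one translates in $G$ itself by a well-chosen group element to move a generic pair inside the good set, here one must exploit the additive structure of $R$ and the distributivity of the action to localize near a continuity point. Once this substitution is in place, the remainder is a routine composition-of-continuous-maps argument, and the overall proof closely parallels the template of Lemma \ref{Lemma:continuous-G-module}.
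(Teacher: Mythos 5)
Your proposal is correct and follows essentially the same route as the paper: establish separate continuity via automatic continuity of Borel homomorphisms (using distributivity to make $\lambda\mapsto\lambda\cdot x_0$ an additive homomorphism on $(R,+)$), apply the separate-continuity lemma to get a dense $G_{\delta}$ set of joint continuity points, and then use the distributive decomposition $\lambda\cdot x=(\lambda-\mu_{0})\cdot x+\mu_{0}\cdot x$ to translate an arbitrary point onto a continuity point. Your bookkeeping of the translation is in fact slightly cleaner than the paper's (which contains a sign typo in the analogous decomposition), but the argument is the same.
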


\begin{proof}
As in the proof of Lemma \ref{Lemma:continuous-G-module}, for every $\lambda
_{0}\in R$, the group homomorphism $A\rightarrow A$, $x\mapsto \lambda x$ of 
$A$ is Borel, and hence continuous. For the same reason, for every $a_{0}\in
A$, the map $R\rightarrow A$, $\lambda \mapsto \lambda a_{0}$ is a Borel
group homomorphism, and hence continuous. Thus, by Lemma \ref%
{Lemma:separately-continuous} there exists a dense $G_{\delta }$ subset $C$
of $R\times A$ such that for every $a\in a$, $C^{a}$ is a dense $G_{\delta }$
subset of $R$ and the function $m:\left( \lambda ,a\right) \mapsto \lambda
\cdot a$ is continuous at every point of $C$. Fix $\left( \lambda
_{0},a_{0}\right) \in R\times A$ and pick $\mu _{0}\in A$ such that the
function $m$ is continuous at $\left( \mu _{0},a_{0}\right) $. Then we can
write $m$ as the function $\left( \lambda ,a\right) \mapsto \left( \mu
_{0}-\lambda _{0}+\lambda \right) \cdot a+\left( \mu _{0}-\lambda
_{0}\right) \cdot a$. This witnesses that $m$ is continuous at $\left( \mu
_{0},a_{0}\right) $. Being $\left( \mu _{0},a_{0}\right) $ an arbitrary
element of $R\times A$, we have that $m$ is continuous.
\end{proof}

Suppose that $R$ is a Polish group or a Polish ring. The notions of
Polishable $R$-submodule of a Polish module, $R$-module with a Polish cover, 
$R$-submodule with a Polish cover, and Borel-definable, continuously
definable, and liftable $R$-homomorphism between $R$-modules with a Polish
cover are defined as in the group case. It follows easily from Lemma \ref%
{Lemma:continuous-G-module} and Lemma \ref{Lemma:continuous-R-module} that
all the results that we have obtained so far about groups with a Polish
cover apply more generally to Polish $R$-modules. Furthermore, it is not
difficult to see that if $M$ is a $R$-module with a Polish cover, then its
Solecki subgroups are in fact $R$-submodules with a Polish cover.

\section{Better lifts\label{Section:better-lifts}}

In this section, we prove that under certain circumstances a continuously
definable group homomorphism between abelian groups with a Polish cover
admits a lift that satisfies additional properties besides being continuous.

\subsection{Approximately additive lifts}

We begin with considering the existence of continuous lifts with additional
properties for continuously definable group homomorphisms between groups
with a Polish cover.

\begin{definition}
Suppose that $G$ is an abelian Polish group and $H=\hat{H}/M$ is a group
with a Polish cover. A Borel lift $f:G\rightarrow \hat{H}$ for a group
homomorphism $G\rightarrow H$ is \emph{approximately additive }if $f\left(
0\right) =0$ and the function $\delta f:G\times G\rightarrow M$, $\left(
x,y\right) \mapsto f\left( y\right) -f\left( x+y\right) +f\left( x\right) $
is continuous at $\left( 0,0\right) $.
\end{definition}

A similar proof as \cite[Lemma 4.9]{bergfalk_definable_2020} gives the
following.

\begin{lemma}
\label{Lemma:approximately-additive-1step}Suppose that $G$ is an abelian
Polish group, $H=\hat{H}/M$ is an abelian group with a Polish cover, and $%
f:G\rightarrow \hat{H}$ is a Borel lift of a group homomorphism $%
G\rightarrow H$ that is continuous on a zero neighborhood $G_{0}$ of $G$.
Let $M_{1}$ be a zero neighborhood in $M$ such that $M_{1}=\overline{M}_{1}^{%
\hat{H}}\cap M$, where $\overline{M}_{1}^{\hat{H}}$ is the closure of $M_{1}$
in $\hat{H}$. Then there exist $x_{0},y_{0}\in G_{0}$, and a zero
neighborhood $G_{1}$ in $G$ contained in $G_{0}$ such that:

\begin{itemize}
\item for $x,y\in G_{1}$, 
\begin{equation*}
f\left( x+x_{0}\right) +f\left( y+y_{0}\right) -f\left(
x+y+x_{0}+y_{0}\right) \in M_{1}\text{;}
\end{equation*}

\item if $g:G\rightarrow \hat{H}$ is defined by%
\begin{equation*}
g\left( z\right) :=f\left( x_{0}+y_{0}+z\right) -f\left( x_{0}+y_{0}\right) 
\text{,}
\end{equation*}%
then, for every $x,y\in G_{1}$,%
\begin{equation*}
\delta g\left( x,y\right) \in M_{1}+M_{1}+M_{1}+M_{1}\text{.}
\end{equation*}
\end{itemize}
\end{lemma}

\begin{proof}
Since $M_{1}$ is non-meager in $M$, there exists $m\in M$ such that 
\begin{equation*}
A:=\{\left( x,y\right) \in G_{0}\times G_{0}:\delta f\left( x,y\right) \in
m+M_{1}\}
\end{equation*}%
is non-meager in $G_{0}\times G_{0}$. After replacing $f$ with $z\mapsto
f\left( z\right) -m$, we can assume that $m=0$. Since $f:G_{0}\rightarrow 
\hat{H}$ is continuous and $M_{1}=\overline{M}_{1}^{\hat{H}}\cap M$, we have
that $A\subseteq G_{0}\times G_{0}$ is closed.\ Thus, $A$ is somewhere
dense, and there exists a zero neighborhood $G_{1}$ in $G_{0}$ and $%
x_{0},y_{0}\in G_{0}$ such that $\left( x_{0},y_{0}\right) +\left(
G_{1}\times G_{1}\right) \subseteq A$. Thus, for $x,y\in G_{1}$ we have that%
\begin{equation}
f\left( x+x_{0}\right) +f\left( y+y_{0}\right) -f\left(
x+y+x_{0}+y_{0}\right) \in M_{1}\text{.\label{Identita-sbuccia}}
\end{equation}%
Define thus $g:G\rightarrow \hat{H}$ by%
\begin{equation*}
g\left( z\right) :=f\left( x_{0}+y_{0}+z\right) -f\left( x_{0}+y_{0}\right) 
\text{.}
\end{equation*}%
Then we have that, for $x,y\in G_{1}$,%
\begin{eqnarray*}
\delta g\left( x,y\right) &=&g\left( x+y\right) -g\left( x\right) -g\left(
y\right) \\
&=&f\left( x_{0}+y_{0}+x+y\right) -f\left( x_{0}+y_{0}+x\right) -f\left(
x_{0}+y_{0}+y\right) +f\left( x_{0}+y_{0}\right) \text{.}
\end{eqnarray*}%
By \eqref{Identita-sbuccia}, we have%
\begin{equation*}
f\left( x_{0}+y_{0}+x+y\right) -f\left( x_{0}+y_{0}+x\right) -f\left(
x_{0}+y_{0}+y\right) +f\left( x_{0}+y_{0}\right) \in M_{1}+M_{1}+M_{1}+M_{1}
\end{equation*}%
This concludes the proof.
\end{proof}

One can infer from Lemma \ref{Lemma:approximately-additive-1step}, as in the
proof of \cite[Theorem 4.5]{bergfalk_definable_2020}, the following lemma.

\begin{proposition}
\label{Proposition:approximately-additive}Suppose that $G\ $is an abelian
Polish group, $H=\hat{H}/M$ is an abelian group with a Polish cover, and $%
\varphi :G\rightarrow H$ is a locally continuously definable group
homomorphism. Suppose that $M$ has a basis of zero neighborhoods that are
closed in the subspace topology inherited from $H$. Then $\varphi $ has an 
\emph{approximately additive }locally continuous Borel lift.
\end{proposition}

\begin{proof}
By assumption, we have that $M$ has a basis $\left( M_{k}\right) _{k\in
\omega }$ of zero neighborhoods such that $\overline{M}_{k}^{\hat{H}}\cap
M=M_{k}$ for every $k\in \omega $. Let $(\hat{H}_{k})$ be a basis of zero
neighborhoods in $\hat{H}$ such that $\hat{H}_{0}=\hat{H}$. Without loss of
generality, we can assume that $M_{0}=M$ and $M_{k}\subseteq \hat{H}_{k}$
for $k\in \omega $. Since $\varphi $ is locally continuously definable, it
has a Borel lift $f:G\rightarrow \hat{H}$ that is continuous on a zero
neighborhood $U$ in $G$. Let also $d_{G}$ be a compatible complete invariant
metric on $G$ such that $\left\{ z\in G_{0}:d_{G}\left( z,0\right) \leq
2\right\} \subseteq U$. Let $G_{0}$ be the zero neighborhood $\left\{ z\in
G_{0}:d_{G}\left( z,0\right) \leq 1/4\right\} $ in $G$, and set $f_{0}:=f$.

Applying Lemma \ref{Lemma:approximately-additive-1step}, one can define by
recursion on $k\in \omega $:

\begin{itemize}
\item a zero neighborhood $G_{k+1}$ of $G$ contained in $\left\{ x\in
G_{k}:d_{G}\left( x,0\right) \leq 2^{-\left( k+2\right) }\right\} $;

\item elements $x_{k},y_{k}\in G_{k}$;

\item a Borel function $f_{k}:G\rightarrow \hat{H}$ ,
\end{itemize}

such that, for every $k\in \omega $:

\begin{enumerate}
\item for every $z\in G$,%
\begin{equation*}
f_{k+1}\left( z\right) =f_{k}\left( x_{k}+y_{k}+z\right) -f_{k}\left(
x_{k}+y_{k}\right) \text{;}
\end{equation*}

\item for every $x,y\in G_{k}$, 
\begin{equation*}
f_{k}\left( x+y+x_{k}+y_{k}\right) \equiv f_{k}\left( x+x_{k}\right)
+f_{k}\left( y+y_{k}\right) \mathrm{\ \mathrm{mod}}\ M_{k+1}\text{;}
\end{equation*}

\item for every $x,y\in G_{k+1}$,%
\begin{equation*}
\delta f_{k+1}\left( x,y\right) \in M_{k+1}\text{.}
\end{equation*}
\end{enumerate}

Indeed, suppose that $k\geq 0$, and $G_{i+1}$, $f_{i+1}$, and $%
x_{i},y_{i}\in G_{i}$ have been defined for $i<k$. We apply Lemma \ref%
{Lemma:approximately-additive-1step} to obtain elements $x_{k},y_{k}\in
G_{k} $ and a zero neighborhood $G_{k+1}$ in $G$ contained in $\left\{ x\in
G_{k}:d_{G}\left( x,0\right) \leq 2^{-\left( k+2\right) }\right\} $ such
that, setting%
\begin{equation*}
f_{k+1}\left( z\right) :=f_{k}\left( x_{k}+y_{k}+z\right) -f_{k}\left(
x_{k}+y_{k}\right)
\end{equation*}%
for $z\in G$, we have that%
\begin{equation*}
f_{k}\left( x+y+x_{k}+y_{k}\right) \equiv f_{k}\left( x+x_{k}\right)
+f_{k}\left( y+y_{k}\right) \mathrm{\ \mathrm{mod}}\ M_{k+1}
\end{equation*}%
and%
\begin{equation*}
\delta f_{k+1}\left( x,y\right) \in M_{k+1}\text{.}
\end{equation*}%
For $k\in \omega $, set%
\begin{equation*}
z_{k}:=\left( x_{0}+y_{0}\right) +\cdots +\left( x_{k}+y_{k}\right) \text{.}
\end{equation*}%
We prove by induction on $k\in \omega $ that, for every $z\in G$,%
\begin{equation*}
f_{k+1}\left( z\right) =f\left( z_{k}+z\right) -f\left( z_{k}\right) \text{.}
\end{equation*}%
For $k=0$ this holds by definition. Suppose that it holds for $k$. Then we
have that, by definition and the inductive hypothesis,%
\begin{eqnarray*}
f_{k+2}\left( z\right) &=&f_{k+1}\left( x_{k+1}+y_{k+1}+z\right)
-f_{k+1}\left( x_{k+1}+y_{k+1}\right) \\
&=&\left( f\left( z_{k}+x_{k+1}+y_{k+1}+z\right) -f\left( z_{k}\right)
\right) -\left( f\left( z_{k}+x_{k+1}+y_{k+1}\right) -f\left( z_{k}\right)
\right) \\
&=&f\left( z_{k+1}+z\right) -f\left( z_{k+1}\right) \text{.}
\end{eqnarray*}%
Notice that 
\begin{equation*}
d_{G}\left( x_{i}+y_{i},0\right) \leq 2^{-\left( i+1\right) }
\end{equation*}%
for every $i<\omega $, and hence the sequence $\left( z_{k}\right) _{k\in
\omega }$ converges to some $z_{\infty }\in G$ such that $d_{G}\left(
z_{\infty },0\right) \leq 1$.

Set $W:=\left\{ z\in G:d_{G}\left( z,0\right) \leq 1\right\} $. Notice that,
for every $i\in \omega $ and $z\in W$, $z_{i}+z\in U$ and $z_{\infty }+z\in
U $. Define, for $z\in W$,%
\begin{equation*}
g\left( z\right) :=f\left( z+z_{\infty }\right) -f\left( z_{\infty }\right) =%
\mathrm{lim}_{i\rightarrow \infty }\left( f\left( z+z_{i}\right) -f\left(
z_{i}\right) \right) =\mathrm{lim}_{i\rightarrow \infty }f_{i}\left(
z\right) \text{.}
\end{equation*}%
Since the family of functions $\left( f_{i}\right) _{i\in \omega }$ is
uniformly equicontinuous on $W$, we have that the function $g:W\rightarrow 
\hat{H}$ is continuous. Furthermore, $g$ satisfies $g\left( x\right)
+M=\varphi \left( x\right) $ for $x\in W$ and, for every $k\in \omega $ and $%
x,y\in G_{k}$,%
\begin{equation*}
\delta g\left( x,y\right) \in M_{k}\text{.}
\end{equation*}%
Finally, one can extend $g$ to a Borel lift $g:G\rightarrow \hat{H}$ for $%
\varphi $ using Lemma \ref{Lemma:locally-continuously}.
\end{proof}

\begin{remark}
\label{Remark:approximately-additive-analytic}A similar proof as Proposition %
\ref{Proposition:approximately-additive} gives the following result: suppose
that $G\ $is an abelian real Lie group, $H=\hat{H}/M$ is an abelian group
with a real Lie cover, and $\varphi :G\rightarrow H$ is a group homomorphism
that has a Borel lift $G\rightarrow \hat{H}$ that is analytic on an open
zero neighborhood in $G$. Then $\varphi $ has an \emph{approximately
additive }Borel lift that is analytic on an open zero neighborhood in $G$.
\end{remark}

\subsection{Approximately $R$-linear lifts}

A \emph{non-Archimedean Polish ring} is a Polish ring that has a basis of
(open) zero neighborhoods consisting of subrings. Let $R$ be a
non-Archimedean Polish ring. A subset $A$ of $R$ is \emph{bounded} if for
every zero neighborhood $U$ in $R$ there exists a zero neighborhood $V$ of $%
R $ such that $V\cdot A\subseteq U$ \cite{weiss_boundedness_1956}. We say
that $R$ is \emph{locally bounded} if it has a bounded zero neighborhood. A
Polish $R$-module $X$ is \emph{non-Archimedean} if for every zero
neighborhood $U$ of $X$ there exists an open subring $O$ of $R$ and an open $%
O$-submodule $V$ of $X$ contained in $U$.

\begin{definition}
\label{Definition:approximately-linear-lift}Suppose that $R$ is a
non-Archimedean Polish ring and that $X=\hat{X}/N$ and $Y=\hat{Y}/M$ are $R$%
-modules with a non-Archimedean Polish cover. An \emph{approximately }$R$-%
\emph{linear continuous lift }of an $R$-homomorphism $\varphi :X\rightarrow
Y $ is an approximately additive continuous lift $f:\hat{X}\rightarrow \hat{Y%
}$ such that for every zero neighborhood $U$ of $\hat{Y}$ there exists an
open subring $O$ of $R$ and an open $O$-submodule $W$ of $\hat{X}$ such that 
$f\left( \lambda x\right) \equiv \lambda f\left( x\right) \ \mathrm{\mathrm{%
mod}}U$ for every $\lambda \in O$ and $x\in W$.
\end{definition}

\begin{lemma}
\label{Lemma:approximately-linear-lift}Suppose that $R$ is a non-Archimedean
Polish ring, $X$ is non-Archimedean Polish $R$-module, and $Y=\hat{Y}/M$ is
an $R$-module with a non-Archimedean Polish cover. Let $g:X\rightarrow \hat{Y%
}$ be a continuous lift for an $R$-homomorphism $X\rightarrow Y$. Let $M_{1}$
be an open subgroup of $M$ that is closed in the subspace topology inherited
from $\hat{Y}$. Suppose that $X_{1}$ is an open subgroup of $X$ such that,
for $x,y\in X_{1}$,%
\begin{equation*}
g\left( x+y\right) \equiv g\left( x\right) +g\left( y\right) \ \mathrm{mod}%
M_{1}\text{.}
\end{equation*}%
Then there exist an open subring $O$ of $R$ and an open $O$-submodule $%
X_{1}^{\prime }\subseteq X$ such that for every $x\in X_{1}^{\prime }$ and $%
\lambda \in O$,%
\begin{equation*}
g\left( \lambda x\right) \equiv \lambda g\left( x\right) \ \mathrm{mod}M_{1}%
\text{.}
\end{equation*}%
Furthermore, for every bounded open subring $S$ of $R$ there exists an open $%
S$-submodule $X_{1}^{\prime \prime }\subseteq X_{1}$ such that for every $%
x\in X_{1}^{\prime \prime }$ and $\lambda \in S$,%
\begin{equation*}
g\left( \lambda x\right) \equiv \lambda g\left( x\right) \ \mathrm{mod}M_{1}%
\text{.}
\end{equation*}
\end{lemma}

\begin{proof}
Consider the Borel function%
\begin{equation*}
\nabla g:R\times X\rightarrow M\text{, }\left( \lambda ,x\right) \mapsto
g\left( \lambda x\right) -\lambda g\left( x\right)
\end{equation*}%
Since $M_{1}$ is closed in the subspace topology on $M$ inherited from $Y$,
we have that there exists $x_{0}\in X_{0}$, $\lambda _{0}\in R$, an open
subring $O$ of $R$, an open subgroup $X_{1}^{\prime }$ in $X$ and $m\in M$
such that%
\begin{equation*}
g\left( \left( \lambda +\lambda _{0}\right) \left( x+x_{0}\right) \right)
\equiv \left( \lambda +\lambda _{0}\right) g\left( x+x_{0}\right) +m\ 
\mathrm{\mathrm{mod}}M_{1}
\end{equation*}%
for every $\lambda \in O$ and $x\in X_{1}^{\prime }$.\ In particular for $%
\lambda =0$ and $x=0$ we obtain%
\begin{equation*}
g\left( \lambda _{0}x_{0}\right) \equiv \lambda _{0}g\left( x_{0}\right) +m%
\text{ }\mathrm{\mathrm{mod}}M_{1}\text{.}
\end{equation*}%
Thus, in particular for $x=0$ and $\lambda \in O$ we obtain%
\begin{eqnarray*}
g\left( \lambda x_{0}\right) +\lambda _{0}g\left( x_{0}\right) +m &\equiv
&g\left( \lambda x_{0}\right) +g\left( \lambda _{0}x_{0}\right) \\
&\equiv &g\left( \left( \lambda +\lambda _{0}\right) x_{0}\right) \\
&\equiv &\left( \lambda +\lambda _{0}\right) g\left( x_{0}\right) +m \\
&\equiv &\lambda g\left( x_{0}\right) +\lambda _{0}g\left( x_{0}\right) +m\ 
\mathrm{mod}M_{1}\text{.}
\end{eqnarray*}%
Thus, we have that 
\begin{equation*}
g\left( \lambda x_{0}\right) \equiv \lambda g\left( x_{0}\right) \ \mathrm{%
mod}M_{1}
\end{equation*}%
for every $\lambda \in O$.

For $\lambda =0$ and $x\in X_{1}^{\prime }$ we obtain%
\begin{eqnarray*}
g\left( \lambda _{0}x\right) +\lambda _{0}g\left( x_{0}\right) +m &\equiv
&g\left( \lambda _{0}x\right) +g\left( \lambda _{0}x_{0}\right) \\
&\equiv &g\left( \lambda _{0}\left( x+x_{0}\right) \right) \\
&\equiv &\lambda _{0}g\left( x+x_{0}\right) +m \\
&\equiv &\lambda _{0}g\left( x\right) +\lambda _{0}g\left( x_{0}\right) +m\ 
\mathrm{mod}M_{1}\text{.}
\end{eqnarray*}%
Thus we have that%
\begin{equation*}
g\left( \lambda _{0}x\right) \equiv \lambda _{0}g\left( x\right) \ \mathrm{%
mod}M_{1}
\end{equation*}%
for $x\in X_{1}^{\prime }$.

Finally for $\lambda \in O$ and $x\in X_{1}^{\prime }$ we have that%
\begin{eqnarray*}
&&g\left( \lambda x\right) +\lambda _{0}g\left( x\right) +\lambda g\left(
x_{0}\right) +\lambda _{0}g\left( x_{0}\right) +m \\
&\equiv &g\left( \lambda x\right) +g\left( \lambda _{0}x\right) +g\left(
\lambda x_{0}\right) +g\left( \lambda _{0}x_{0}\right) \\
&\equiv &g\left( \lambda x+\lambda _{0}x+\lambda x_{0}+\lambda
_{0}x_{0}\right) \\
&\equiv &\lambda g\left( x\right) +\lambda _{0}g\left( x\right) +\lambda
g\left( x_{0}\right) +\lambda _{0}g\left( x_{0}\right) +m\ \mathrm{mod}M_{1}%
\text{.}
\end{eqnarray*}%
Thus, we have that for every $\lambda \in O$ and $x\in X_{1}^{\prime }$,%
\begin{equation*}
g\left( \lambda x\right) \equiv \lambda g\left( x\right) \ \mathrm{\mathrm{%
mod}}M_{1}\text{.}
\end{equation*}

If $S$ is a bounded open subring of $R$ then there exists an open zero
neighborhood $V$ of $R$ such that $V\cdot S\subseteq O$. Thus, for every $%
\lambda \in S$ and $v\in V$ we have that%
\begin{equation*}
g\left( \lambda vx\right) \equiv \lambda vg\left( x\right) \ \mathrm{mod}%
M_{1}
\end{equation*}%
and in particular%
\begin{equation*}
g\left( vx\right) \equiv vg\left( x\right) \ \mathrm{mod}M_{1}
\end{equation*}%
Define now $X_{1}^{\prime \prime }=V\cdot X_{1}^{\prime }$. Then we have
that $X_{1}^{\prime \prime }$ is an open $S$-submodule of $X$ contained in $%
X_{1}$. Furthermore, for $\lambda \in S$ and $x\in X_{1}^{\prime \prime }$
we have that $x=vy$ for some $y\in X_{1}^{\prime }$ and $v\in V$ and hence%
\begin{equation*}
g\left( \lambda x\right) =g\left( \lambda vy\right) \equiv \lambda vg\left(
y\right) \equiv \lambda g\left( vy\right) =\lambda g\left( x\right) \ 
\mathrm{mod}M_{1}\text{.}
\end{equation*}%
This concludes the proof.
\end{proof}

\begin{proposition}
\label{Proposition:approximately-linear-lift}Let $R$ be a non-Archimedean
Polish ring. Suppose that $\varphi :X\rightarrow Y\ $is a Borel-definable $R$%
-homomorphism between $R$-modules with a non-Archimedean Polish cover.
Suppose that $Y=\hat{Y}/M$ where $M$ has a basis of zero neighborhoods that
are closed in the subspace topology inherited from $\hat{Y}$. Then $\varphi $
has an approximately $R$-linear continuous lift.
\end{proposition}

\begin{proof}
This follows immediately from Lemma \ref{Lemma:approximately-linear-lift}
and Proposition \ref{Proposition:approximately-additive}.
\end{proof}

\section{Left hearts of categories of Polish modules\label%
{Section:left-hearts}}

In this section, we provide explicit descriptions of the heart of categories
of Polish modules.

\subsection{The left heart of Polish modules\label{Subsection:LH-A}}

Fix a Polish group or Polish ring $R$. We let $\mathcal{A}_{R}$ be the
category whose objects are the Polish $R$-modules and whose morphisms are
the continuous $R$-homomorphisms.

\begin{lemma}
\label{Lemma:AR-quasiabelian}$\mathcal{A}_{R}$ is a countably complete
quasi-abelian category.
\end{lemma}

\begin{proof}
It is clear that $\mathcal{A}_{R}$ is an additive category. If $\varphi
:X\rightarrow Y$ is a continuous $R$-homomorphism, then its kernel in $%
\mathcal{A}_{R}$ is $\mathrm{\mathrm{ker}}\left( \varphi \right) =\left\{
x\in X:\varphi \left( x\right) =0\right\} $, and its cokernel in $\mathcal{A}%
_{R}$ is the quotient of $Y$ by the \emph{closure }of the image of $\varphi $%
.\ Thus, the kernels in $\mathcal{A}_{R}$ are the continuous injective $R$%
-homomorphism \emph{with closed image}, and the cokernels in $\mathcal{A}%
_{R} $ are the continuous surjective $R$-homomorphisms. It remains to prove
that the class of kernels is stable under push-out along arbitrary
morphisms, and the class of cokernels is stable under pull-back along
arbitrary morphisms.

Suppose that $\varphi _{i}:X\rightarrow Y_{i}$ are continuous $R$%
-homomorphisms for $i\in \left\{ 0,1\right\} $. Let $p_{i}:Y_{i}\rightarrow
Z $ for $i\in \left\{ 0,1\right\} $ be their pushout.\ Thus we have that $Z$
is the quotient of $Y_{0}\oplus Y_{1}$ by the closure $M$ of the $R$%
-submodule $\left\{ \left( -\varphi _{0}\left( x\right) ,\varphi _{1}\left(
x\right) \right) :x\in X\right\} $. The map $p_{0}:Y_{0}\rightarrow Z$ is
defined by $y\mapsto \left( y,0\right) +M$, and the map $p_{1}:Y_{1}%
\rightarrow Z$ is defined by $y\mapsto \left( 0,y\right) +M$. Suppose that $%
\varphi _{0}$ is a kernel, namely it is injective and it has closed range.
We need to prove that $p_{1}$ is also a kernel.

Suppose that $y\in Y_{1}$ is such that $p_{1}\left( y\right) =0$. Thus, we
have that $\left( 0,y\right) \in M$. Hence, there exists a sequence $\left(
x_{n}\right) $ in $X$ such that $\varphi _{0}\left( x_{n}\right) \rightarrow
0$ and $\varphi _{1}\left( x_{n}\right) \rightarrow y$. Since $\varphi _{0}$
is injective with closed range, it is a homeomorphism onto its image.
Therefore, we have that $x_{n}\rightarrow 0$ and hence $y=\mathrm{\mathrm{lim%
}}_{n}{}\varphi _{1}(x_{n})=0$. This shows that $p_{1}$ is injective.

Suppose now that $\left( y_{n}\right) $ is a sequence in $Y$ such that $%
\left( p_{1}\left( y_{n}\right) \right) $ converges in $Z$ to $\left(
z_{0},z_{1}\right) +M$. Thus we can find a sequence $\left( x_{n}\right) $
in $X$ such that $\left( \varphi _{0}\left( x_{n}\right) ,p_{1}\left(
y_{n}\right) -\varphi _{1}\left( x_{n}\right) \right) $ converges in $%
Y_{0}\oplus Y_{1}$ to $\left( z_{0},z_{1}\right) $. Since $\varphi _{0}$ is
a kernel, this implies that $\left( x_{n}\right) $ converges in $X$ to some $%
x\in X$ such that $\varphi _{0}\left( x\right) =z_{0}$.\ Thus, we have that $%
\left( p_{1}\left( y_{n}\right) \right) $ converges to $z_{1}+\varphi
_{1}\left( x\right) $ and hence 
\begin{equation*}
\left( z_{0},z_{1}\right) +M=\left( z_{0}-\varphi _{0}\left( x\right)
,z_{1}+\varphi _{1}\left( x\right) \right) +M=\mathrm{lim}_{n}\left( \left(
0,p_{1}\left( y_{n}\right) \right) +M\right) \text{.}
\end{equation*}%
This shows that $p_{1}$ has closed range, concluding the proof that $p_{1}$
is a kernel.

Suppose now that $\varphi _{i}:Y_{i}\rightarrow Z$ for $i\in \left\{
0,1\right\} $ are continuous $R$-homomorphisms.\ Let $\eta _{i}:X\rightarrow
Y_{i}$ for $i\in \left\{ 0,1\right\} $ be their pullback. Then we have that 
\begin{equation*}
X=\left\{ \left( y_{0},y_{1}\right) \in Y_{0}\oplus Y_{1}:\varphi _{0}\left(
y_{0}\right) =\varphi _{1}\left( y_{1}\right) \right\} \text{.}
\end{equation*}%
The map $\eta _{0}:X\rightarrow Y_{0}$ is given by $\left(
y_{0},y_{1}\right) \mapsto y_{0}$, and the map $\eta _{1}:X\rightarrow Y_{1}$
is given by $\left( y_{0},y_{1}\right) \mapsto y_{1}$. Suppose that $\varphi
_{1}$ is a cokernel, i.e.\ surjective. We need to show that $\eta _{0}$ is
also a cokernel. Suppose that $y_{0}\in Y_{0}$. Consider $\varphi _{0}\left(
y_{0}\right) \in Z$. Since $\varphi _{1}$ is surjective, there exists $%
y_{1}\in Y_{1}$ such that $\varphi _{1}\left( y_{1}\right) =\varphi
_{0}\left( y_{0}\right) $. Thus we have that $x:=\left( y_{0},y_{1}\right)
\in X$ is such that $\eta _{0}\left( x\right) =y_{0}$. This concludes the
proof that $\eta _{0}$ is a cokernel.

Finally, the fact that $\mathcal{A}_{R}$ is countably complete follows from
the fact that it has countable products.
\end{proof}

We let $\mathcal{M}_{R}$ be the category whose objects are the $R$-modules
with a Polish cover, and whose morphisms are the Borel-definable $R$%
-homomorphisms. We regard $\mathcal{A}_{R}$ as a full subcategory of $%
\mathcal{M}_{R}$, by identifying a Polish $R$-module $X$ with the $R$-module
with a Polish cover $X/N$ where $N$ is the trivial submodule of $X$.

\begin{theorem}
\label{Theorem:MR-abelian}The category $\mathcal{M}_{R}$ is abelian. The
inclusion functor $\mathcal{A}_{R}\rightarrow \mathcal{M}_{R}$ is exact and
countably continuous, and it extends to an equivalence of categories $%
\mathrm{LH}\left( \mathcal{A}_{R}\right) \rightarrow \mathcal{M}_{R}$.
\end{theorem}

\begin{proof}
Suppose that $\varphi :X\rightarrow Y$ is a Borel-definable $R$-homomorphism
between $R$-modules with a Polish cover. Then by Proposition \ref%
{Proposition:preimage} and the results of Subsection \ref{Subsection:modules}%
, we have that

\begin{itemize}
\item $\mathrm{ker}\left( \varphi \right) :=\left\{ x\in X:\varphi \left(
x\right) =0\right\} $ is an $R$-submodule with a Polish cover of $X$, and

\item $\varphi \left( X\right) $ is an $R$-submodule with a Polish cover of $%
Y$.
\end{itemize}

It is easy to see that $\mathrm{ker}\left( \varphi \right) \rightarrow X$ is
the kernel of $\varphi $ and $Y\rightarrow Y/\varphi \left( X\right) $ is
the cokernel of $\varphi $. This easily implies that every monic arrow is a
kernel and every epic arrow is a cokernel, and hence $\mathcal{M}_{R}$ is an
abelian category.

It follows from the characterization of the left heart provided by the last
item in Proposition \ref{Proposition:left-heart} that the inclusion $J:%
\mathcal{A}_{R}\rightarrow \mathcal{M}_{R}$ extends to an equivalence of
categories $\mathrm{LH}\left( \mathcal{A}_{R}\right) \rightarrow \mathcal{M}%
_{R}$. It is also easy to see that $\mathcal{M}_{R}$ has countable products,
and that $J$ preserves countable products. Since $J$ is finitely continuous
and preserves countable products, it is also countably continuous.
\end{proof}

\subsection{Left hearts of categories of Polish modules\label%
{Subsection:LH-B}}

Recall that a subcategory $\mathcal{C}$ of a category $\mathcal{D}$ is \emph{%
strictly full} if its collection of objects is closed under isomorphism in $%
\mathcal{D}$, and for objects $x,y$ in $\mathcal{C}$, $\mathrm{Hom}_{%
\mathcal{C}}\left( x,y\right) =\mathrm{Hom}_{\mathcal{D}}\left( x,y\right) $%
. Let $\mathcal{B}$ be a\emph{\ }strictly full quasi-abelian subcategory of
the category $\mathcal{A}_{R}$ of Polish $R$-modules. We notice that this
implies that, if $X,Y$ are objects of $\mathcal{B}$, and $f:X\rightarrow Y$
is a continuous $R$-homomorphism, then $f$ is a morphism in $\mathcal{B}$, $%
\mathrm{k\mathrm{er}}\left( f\right) $ is an object of $\mathcal{B}$, and $%
f\left( X\right) $ endowed with the Polish $R$-module topology induced via $%
f $ by the Polish $R$-module topology on $X$ is an object of $\mathcal{B}$
(being isomorphic to \textrm{coker}$\left( \mathrm{ker}\left( f\right)
\right) $).

\begin{definition}
An $R$-module with a $\mathcal{B}$-cover is an $R$-module $X$ explicitly
presented as a quotient $\hat{X}/N$ where $\hat{X}$ and $N$ are objects of $%
\mathcal{B}$, $N$ is an $R$-submodule of $\hat{X}$, and the inclusion $%
N\rightarrow \hat{X}$ is continuous.

If $X=\hat{X}/N$ is an $R$-module with a $\mathcal{B}$-cover, then an $R$%
-submodule with a $\mathcal{B}$-cover of $X$ is an $R$-submodule $Y=\hat{Y}%
/N $ where $\hat{Y}$ is an $R$-submodule of $\hat{X}$, $\hat{Y}$ is an
object of $\mathcal{B}$, and the inclusion $\hat{Y}\rightarrow \hat{X}$ is
continuous.
\end{definition}

\begin{definition}
\label{Definition:B-definable}An $R$-homomorphism $\varphi :X\rightarrow Y$
between $R$-modules with a $\mathcal{B}$-cover is:

\begin{itemize}
\item $\mathcal{B}$-\emph{definable} if its graph $\Gamma \left( \varphi
\right) $ is an $R$-submodule with a $\mathcal{B}$-cover of $X\oplus Y$;

\item \emph{liftable} if has a lift to a continuous $R$-homomorphism $\hat{X}%
\rightarrow \hat{Y}$, where $X=\hat{X}/N$ and $Y=\hat{Y}/M$.
\end{itemize}
\end{definition}

\begin{remark}
\label{Remark:inverse-B-definable}If $\varphi :X\rightarrow Y$ is a $%
\mathcal{B}$-definable bijective $R$-homomorphism, then $\varphi ^{-1}$ is $%
\mathcal{B}$-definable.
\end{remark}

\begin{remark}
By Theorem \ref{Theorem:factorization}, an $R$-homomorphism is $\mathcal{A}%
_{R}$-definable if and only if it is Borel-definable.
\end{remark}

\begin{lemma}
A liftable $R$-homomorphism is $\mathcal{B}$-definable.
\end{lemma}

\begin{proof}
Suppose that $\varphi :\hat{X}/N\rightarrow \hat{Y}/M$, where $X=\hat{X}/N$
and $Y=\hat{Y}/M$, is induced by a continuous $R$-homomorphism $f:\hat{X}%
\rightarrow \hat{Y}$.\ Then we have that%
\begin{equation*}
W:=\{\left( x,y,z\right) \in \hat{X}\oplus \hat{Y}\oplus M:f\left( x\right)
=y+z\}
\end{equation*}%
is an object of $\mathcal{B}$ being the kernel of the morphism%
\begin{equation*}
\hat{X}\oplus \hat{Y}\oplus M\rightarrow \hat{Y}\text{, }\left( x,y,z\right)
\mapsto f\left( x\right) -y-z
\end{equation*}%
in $\mathcal{B}$. Thus, we have that%
\begin{equation*}
\{\left( x,y\right) \in \hat{X}\oplus \hat{Y}:f\left( x\right) \equiv y\ 
\mathrm{mod}M\}
\end{equation*}%
is a Polishable $R$-submodule of $\hat{X}\oplus \hat{Y}$ that belongs to $%
\mathcal{B}$, being the image of $W$ under the continuous $R$-homomorphism%
\begin{equation*}
W\rightarrow \hat{X}\oplus \hat{Y}\text{, }\left( x,y,z\right) \mapsto
\left( x,y\right) \text{.}
\end{equation*}
\end{proof}

\begin{lemma}
Suppose that $\varphi :X\rightarrow Y$ and $\psi :Y\rightarrow Z$ are $%
\mathcal{B}$-definable $R$-homomorphisms. Then $\psi \circ \varphi
:X\rightarrow Z$ is $\mathcal{B}$-definable.
\end{lemma}

\begin{proof}
Suppose that $\varphi ,\psi $ are $\mathcal{B}$-definable. By assumption, we
have that $\Gamma \left( \varphi \right) \subseteq X\oplus Y$ and $\Gamma
\left( \psi \right) \subseteq Y\oplus Z$ are $R$-submodules with a Polish
cover. Then we have that 
\begin{equation*}
W:=\left\{ \left( x,y_{0},y_{1},z\right) :\left( x,y_{0}\right) \in \Gamma
\left( \varphi \right) ,\left( y_{1},z\right) \in \Gamma \left( \psi \right)
,y_{0}=y_{1}\right\}
\end{equation*}%
is an $R$-submodule with a $\mathcal{B}$-cover of $\Gamma \left( \varphi
\right) \oplus \Gamma \left( \psi \right) $, being the kernel of the
liftable $R$-homomorphism%
\begin{equation*}
\Gamma \left( \varphi \right) \oplus \Gamma \left( \psi \right) \rightarrow Y%
\text{, }\left( x,y_{0},y_{1},z\right) \mapsto y_{0}-y_{1}\text{.}
\end{equation*}%
It follows that $\Gamma \left( \psi \circ \varphi \right) $ is an $R$%
-submodule with a $\mathcal{B}$-cover of $X\oplus Z$, being the image of $W$
under the liftable $R$-homomorphism%
\begin{equation*}
W\rightarrow X\oplus Z\text{, }\left( x,y,y,z\right) \mapsto \left(
x,z\right) \text{.}
\end{equation*}
\end{proof}

\begin{lemma}
Suppose that $\varphi :X\rightarrow Y$ is a $\mathcal{B}$-definable $R$%
-homomorphism. Then $\mathrm{ker}\left( \varphi \right) \subseteq X$ and $%
\varphi \left( X\right) \subseteq Y$ are $R$-submodules with a $\mathcal{B}$%
-cover.
\end{lemma}

\begin{proof}
We have that%
\begin{equation*}
W:=\left\{ \left( x,y\right) \in \Gamma \left( \varphi \right) :y=0\right\}
\end{equation*}%
is an $R$-submodule with a $\mathcal{B}$-cover of $\Gamma \left( \varphi
\right) $, and hence \textrm{Ker}$\left( \varphi \right) \subseteq X$ is an $%
R$-submodule with a $\mathcal{B}$-cover, being the image of the liftable $R$%
-module homomorphism $W\rightarrow X$, $\left( x,y\right) \mapsto x$.

Similarly, we have that $\varphi \left( X\right) $ is the image of the
liftable $R$-module homomorphism $\Gamma \left( \varphi \right) \rightarrow
Y $, $\left( x,y\right) \mapsto y$.
\end{proof}

\begin{lemma}
\label{Lemma:characterize-B-definable}If $\varphi :X\rightarrow Y$ is an $R$%
-homomorphism between $R$-modules with a $\mathcal{B}$-cover, the following
are equivalent:

\begin{enumerate}
\item $\varphi $ is $\mathcal{B}$-definable;

\item there exist an $R$-module with a $\mathcal{B}$-cover $Z$ and liftable $%
R$-homomorphisms $\psi :Z\rightarrow Y$ and $\sigma :Z\rightarrow X$ such
that $\sigma $ is a bijection and $\varphi =\varphi ^{\prime }\circ \sigma
^{-1}$.
\end{enumerate}
\end{lemma}

\begin{proof}
Suppose that $\varphi :X\rightarrow Y$ is an $R$-module homomorphism between 
$R$-modules with a $\mathcal{B}$-cover. Suppose that $\varphi $ is $\mathcal{%
B}$-definable. Then we can set $Z=\Gamma \left( \varphi \right) $, $\sigma
:Z\rightarrow X$, $\left( x,y\right) \mapsto x$, and $\psi :Z\rightarrow Y$, 
$\left( x,y\right) \mapsto y$. This shows that (1) implies (2). Suppose now
that $\varphi $ satisfies (2). Then we have that $\psi $ and $\sigma $ are $%
\mathcal{B}$-definable, being liftable. Hence $\varphi =\psi \circ \sigma
^{-1}$ is $\mathcal{B}$-definable, and (1) holds.
\end{proof}

\begin{corollary}
\label{Corollary:characterize-B-definable}If $\varphi :X\rightarrow Y$ is a $%
\mathcal{B}$-definable $R$-homomorphism between $R$-modules with a $\mathcal{%
B}$-cover, then it is Borel-definable.
\end{corollary}

\begin{proof}
By\ Lemma \ref{Lemma:characterize-B-definable}, adopting the notation as in\
(2) from its statement, we have that $\varphi =\psi \circ \sigma ^{-1}$.
Since $\psi $ and $\sigma $ are liftable, we have that $\psi $ and (by
Remark \ref{Remark:inverse}) $\sigma ^{-1}$ are Borel-definable.\ Hence, $%
\varphi $ is Borel-definable.
\end{proof}

\begin{lemma}
\label{Lemma:sum-definable}If $\varphi _{0},\varphi _{1}:X\rightarrow Y$ are 
$\mathcal{B}$-definable $R$-homomorphisms, then $\varphi _{0}+\varphi _{1}$
is $\mathcal{B}$-definable.
\end{lemma}

\begin{proof}
Suppose that $\varphi _{0},\varphi _{1}:X\rightarrow Y$ are $\mathcal{B}$%
-definable $R$-homomorphism. Then there exist objects $X^{\prime },X^{\prime
\prime }$ of $\mathcal{B}$ and continuous $R$-homomorphisms $\varphi
_{0}^{\prime }:X^{\prime }\rightarrow Y$, $\varphi _{1}^{\prime }:X^{\prime
\prime }\rightarrow Y$, $\sigma :X^{\prime }\rightarrow X$, and $\tau
:X^{\prime \prime }\rightarrow X$ such that $\sigma ,\tau $ are bijective, $%
\varphi _{0}=\varphi _{0}^{\prime }\circ \sigma ^{-1}$, and $\varphi
_{1}=\varphi _{1}^{\prime }\circ \tau ^{-1}$. We can thus consider the
continuous $R$-homomorphism $\psi :X^{\prime }\oplus X^{\prime \prime
}\rightarrow Y$, $\left( x,y\right) \mapsto \varphi _{0}^{\prime }\left(
x\right) +\varphi _{1}^{\prime }\left( y\right) $, and the continuous
bijective $R$-homomorphism $\lambda :X^{\prime }\oplus X^{\prime \prime
}\rightarrow X\oplus X$, $\left( x,y\right) \mapsto \left( \sigma \left(
x\right) ,\tau \left( y\right) \right) $.\ Then we have that $\varphi
_{0}+\varphi _{1}=\psi \circ \lambda ^{-1}\circ \Delta $ where $\Delta
:X\rightarrow X\oplus X$, $x\mapsto \left( x,x\right) $. This shows that $%
\varphi _{0}+\varphi _{1}$ is $\mathcal{B}$-definable.
\end{proof}

Define $\mathcal{M}_{\mathcal{B}}$ to be the (not necessarily full)
subcategory of $\mathcal{M}_{R}$ whose objects are the $R$-modules with a $%
\mathcal{B}$-cover and whose morphisms are the $\mathcal{B}$-definable $R$%
-homomorphisms.

\begin{theorem}
\label{Theorem:left-heart-B}Let $\mathcal{B}$ be a strictly full
quasi-abelian subcategory of the category $\mathcal{A}_{R}$ of Polish $R$%
-modules and continuous $R$-module homomorphisms. Let $\mathcal{M}_{\mathcal{%
B}}$ be the category of $R$-modules with a $\mathcal{B}$-cover and $\mathcal{%
B}$-definable $R$-homomorphisms. Then we have that:

\begin{enumerate}
\item $\mathcal{M}_{\mathcal{B}}$ is an abelian category;

\item the inclusion $\mathcal{B}\rightarrow \mathcal{M}_{\mathcal{B}}$
extends to an equivalence of category $\mathrm{LH}\left( \mathcal{B}\right)
\rightarrow \mathcal{M}_{\mathcal{B}}$;

\item if $\mathcal{B}$ is countably complete and the inclusion $\mathcal{B}%
\rightarrow \mathcal{A}_{R}$ is countably continuous, then $\mathcal{M}_{%
\mathcal{B}}$ is countably complete and the inclusions $\mathcal{B}%
\rightarrow \mathcal{M}_{\mathcal{B}}\rightarrow \mathcal{M}_{R}$ are
countably continuous.
\end{enumerate}
\end{theorem}

\begin{proof}
(1) We begin with showing that $\mathcal{M}_{\mathcal{B}}$ is an additive
subcategory of $\mathcal{M}_{R}$. It is clear that the zero object for $%
\mathcal{M}_{R}$ is also the zero object for $\mathcal{M}_{\mathcal{B}}$. By
Lemma \ref{Lemma:sum-definable} and Corollary \ref%
{Corollary:characterize-B-definable}, the set of $\mathcal{B}$-definable $R$%
-homomorphisms $X\rightarrow Y$ is a subgroup of the set of Borel-definable $%
R$-homomorphisms. It remains to prove that, for objects $X,Y$ in $\mathcal{M}%
_{\mathcal{B}}$, their biproduct $X\oplus Y$ in $\mathcal{M}_{R}$ is also
their coproduct in $\mathcal{M}_{\mathcal{B}}$. Since $\mathcal{B}$ is a
quasi-abelian subcategory of $\mathcal{A}_{R}$, we have that $X\oplus Y$ is
an $R$-module with a $\mathcal{B}$-cover. Since every liftable $R$%
-homomorphism is $\mathcal{B}$-definable, we have that the canonical maps $%
X\rightarrow X\oplus Y$ and $Y\rightarrow X\oplus Y$ are $\mathcal{B}$%
-definable. It remains to prove that $X\oplus Y$ satisfies the universal
property of the coproduct. Suppose that $\varphi :Z\rightarrow X$ and $\psi
:Z\rightarrow Y$ are $\mathcal{B}$-definable $R$-homomorphisms. Let $\varphi
\oplus \psi :Z\rightarrow X\oplus Y$ be the corresponding Borel-definable $R$%
-homomorphism. We need to prove that $\varphi \oplus \psi $ is $\mathcal{B}$%
-definable. We have that $\varphi =\varphi ^{\prime }\circ \sigma ^{-1}$ and 
$\psi =\psi ^{\prime }\circ \tau ^{-1}$ for some $R$-modules with a $%
\mathcal{B}$-cover $Z^{\prime },Z^{\prime \prime }$ and liftable
homomorphisms $\sigma :Z^{\prime }\rightarrow Z$, $\varphi ^{\prime
}:Z^{\prime }\rightarrow X$, $\tau :Z^{\prime \prime }\rightarrow Z$, $\psi
^{\prime }:Z^{\prime \prime }\rightarrow Y$ such that $\sigma ,\tau $ are
bijective. Thus we have that 
\begin{equation*}
\varphi \oplus \psi =\left( \varphi ^{\prime }\oplus \psi ^{\prime }\right)
\circ \left( \sigma \oplus \tau \right) ^{-1}
\end{equation*}%
is $\mathcal{B}$-definable since $\varphi ^{\prime }\oplus \psi ^{\prime }$
and $\sigma \oplus \tau $ are liftable and $\sigma \oplus \tau $ is
bijective. This shows that $\mathcal{M}_{\mathcal{B}}$ is an additive
subcategory of $\mathcal{M}_{R}$.

We now prove that $\mathcal{M}_{\mathcal{B}}$ is an abelian category, which
is furthermore an abelian subcategory of $\mathcal{M}_{R}$. Suppose that $%
\varphi :X\rightarrow Y$ is a $\mathcal{B}$-definable $R$-homomorphism.We
have that $\mathrm{ker}\left( \varphi \right) $ is an $R$-module with a $%
\mathcal{B}$-cover. Let $\iota :\mathrm{ker}\left( \varphi \right)
\rightarrow X$ be the inclusion map. We now show that $\mathrm{\mathrm{%
\mathrm{ke}}r}\left( \varphi \right) $ is the kernel of $\varphi $ in $%
\mathcal{M}_{\mathcal{B}}$. Suppose that $\psi :Z\rightarrow X$ is a $%
\mathcal{B}$-definable $R$-homomorphism such that $\varphi \circ \psi =0$.\
We can write $\psi =\psi ^{\prime }\circ \sigma ^{-1}$ where $\sigma
:Z^{\prime }\rightarrow Z$ and $\psi ^{\prime }:Z^{\prime }\rightarrow X$
are liftable $R$-homomorphism such that $\sigma $ is bijective and $%
Z^{\prime }$ is an $R$-module with a $\mathcal{B}$-cover. Since $\varphi
\circ \psi =0$ we have that $\varphi \left( Z\right) \subseteq \mathrm{ker}%
\left( \varphi \right) $ and hence $\psi ^{\prime }\left( W\right) \subseteq 
\mathrm{ker}\left( \varphi \right) $. Thus, we can write $\psi ^{\prime
}=\iota \circ \psi ^{\prime \prime }$ for a liftable $R$-homomorphism $\psi
^{\prime \prime }:W\rightarrow \mathrm{ker}\left( \varphi \right) $. Hence,
we have that $\psi ^{\prime \prime }\circ \sigma ^{-1}:Z\rightarrow \mathrm{%
ker}\left( \varphi \right) $ is a $\mathcal{B}$-definable $R$-homomorphism
such that $\iota \circ \left( \psi ^{\prime \prime }\circ \sigma
^{-1}\right) =\psi $. This concludes the proof that $\mathrm{ker}\left(
\varphi \right) $ is the kernel of $\varphi $ in $\mathcal{M}_{\mathcal{B}}$.

Suppose that $\varphi :X\rightarrow Y$ is a $\mathcal{B}$-definable $R$%
-homomorphism. We show that $Y/\varphi \left( X\right) $ is the cokernel of $%
\varphi $ in $\mathcal{M}_{\mathcal{B}}$. We can write $\varphi =\varphi
^{\prime }\circ \sigma ^{-1}$ for some liftable $R$-homomorphisms $\varphi
^{\prime }:X^{\prime }\rightarrow Y$ and $\sigma :X^{\prime }\rightarrow X$
where $\sigma $ is bijective and $X^{\prime }$ is an $R$-module with a $%
\mathcal{B}$-cover. Thus, we have that $\varphi \left( X\right) =\varphi
^{\prime }\left( X^{\prime }\right) \subseteq Y$ is a $R$-submodule with a $%
\mathcal{B}$-cover since $\varphi ^{\prime }$ is liftable and $\mathcal{B}$
is closed under taking quotients by closed $R$-submodules. Hence, $Y/\varphi
\left( X\right) $ is an $R$-module with a $\mathcal{B}$-cover. Suppose now
that $\psi :Y\rightarrow Z$ is a $\mathcal{B}$-definable $R$-homomorphism
such that $\psi \circ \varphi =0$. We can write $\psi =\psi ^{\prime }\circ
\tau ^{-1}$, where $\psi ^{\prime }:Y^{\prime }\rightarrow Z$ and $\tau
:Y^{\prime }\rightarrow Y$ are liftable $R$-homomorphisms, $\tau $ is
bijective, and $Y^{\prime }$ is an $R$-module with a $\mathcal{B}$-cover.
Then we have that $\left( \tau ^{-1}\circ \varphi \right) \left( X\right)
\subseteq Y^{\prime }$ is an $R$-submodule with a $\mathcal{B}$-cover of $%
Y^{\prime }$. Furthermore $0=\psi \circ \varphi =\psi ^{\prime }\circ \left(
\tau ^{-1}\circ \varphi \right) $ and hence $\left( \tau ^{-1}\circ \varphi
\right) \left( X\right) \subseteq \mathrm{ker}\left( \psi ^{\prime }\right) $%
. Since $\psi ^{\prime }$ is liftable, it induces a liftable $R$%
-homomorphism $\bar{\psi}^{\prime }:Y^{\prime }/(\tau ^{-1}\circ \varphi
)\left( X\right) \rightarrow Z$. Similarly, we have that $\tau \left( \left(
\tau ^{-1}\circ \sigma \right) \left( X\right) \right) =\sigma \left(
X\right) $ since $\tau :Y^{\prime }\rightarrow Y$ is a bijective $R$-linear
homomorphism. Hence, being liftable, it induces a liftable bijective $R$%
-linear homomorphism $\bar{\tau}:Y^{\prime }/\left( \tau ^{-1}\circ \varphi
\right) \left( X\right) \rightarrow Y/\varphi \left( X\right) $. Thus, we
have that $\bar{\psi}^{\prime }\circ \bar{\tau}^{-1}:Y/\varphi \left(
X\right) \rightarrow Z$ is a $\mathcal{B}$-definable $R$-homomorphism such
that, letting $\pi _{Y}:Y\rightarrow Y/\varphi \left( X\right) $ and $\pi
_{Y^{\prime }}:Y^{\prime }\rightarrow Y^{\prime }/\left( \tau ^{-1}\circ
\varphi \right) \left( X\right) $ be the canonical quotient mappings, 
\begin{equation*}
\bar{\psi}^{\prime }\circ \bar{\tau}^{-1}\circ \pi _{Y}=\bar{\psi}^{\prime
}\circ \pi _{Y^{\prime }}\circ \tau ^{-1}=\psi ^{\prime }\circ \tau
^{-1}=\psi \text{.}
\end{equation*}
This concludes the proof that $Y/\varphi \left( X\right) $ is the cokernel
of $\varphi $ in $\mathcal{M}_{\mathcal{B}}$.

(2) This follows immediately from (1) and the characterization of the left
heart of a quasi-abelian category from the last item in Proposition \ref%
{Proposition:left-heart}.

(3) Suppose that $\left( X_{n}\right) _{n\in \omega }$ is a sequence of $R$%
-modules with a $\mathcal{B}$-cover. It suffices to prove that their product 
$X_{\omega }:=\prod_{n\in \omega }X_{n}$ in $\mathcal{M}_{R}$ is also their
product in $\mathcal{M}_{\mathcal{B}}$. We have that $X_{\omega }$ is an $R$%
-module with a $\mathcal{B}$-cover, since $\mathcal{B}$ is countably
complete and the inclusion $\mathcal{B}\rightarrow \mathcal{A}_{R}$ is
countably continuous. Furthermore the projection maps $\pi _{i}:X_{\omega
}\rightarrow X_{i}$ for $i\in \omega $ are liftable, and hence $\mathcal{B}$%
-definable. Suppose now that $Z$ is an $R$-module with a $\mathcal{B}$%
-cover, and $\varphi _{i}:X_{i}\rightarrow Z$ are $\mathcal{B}$-definable $R$%
-homomorphisms for $i\in \omega $. Then we have that there exist $R$-modules
with a $\mathcal{B}$-cover $X_{i}^{\prime }$, liftable $R$-homomorphisms $%
\sigma _{i}:X_{i}^{\prime }\rightarrow X_{i}$ and $\varphi _{i}^{\prime
}:X_{i}^{\prime }\rightarrow Z$ such that $\sigma _{i}$ is bijective and $%
\varphi _{i}=\varphi _{i}^{\prime }\circ \sigma _{i}^{-1}$. Define then $%
X_{\omega }^{\prime }:=\prod_{i\in \omega }X_{\omega }^{\prime }$. Then we
have that the sequences $\left( \sigma _{i}\right) $ and $\left( \varphi
_{i}^{\prime }\right) $ induce liftable $R$-homomorphisms $\sigma :X_{\omega
}^{\prime }\rightarrow X_{\omega }$ and $\varphi ^{\prime }:X_{\omega
}^{\prime }\rightarrow X$. Setting $\varphi :=\varphi ^{\prime }\circ \sigma
^{-1}:X_{\omega }\rightarrow Y$ we obtain a $\mathcal{B}$-definable $R$%
-homomorphism such that $\varphi \circ \pi _{i}=\varphi _{i}$ for every $%
i\in \omega $. This shows that $X_{\omega }$ is the product of $\left(
X_{n}\right) $ in $\mathcal{M}_{\mathcal{B}}$, concluding the proof.
\end{proof}

\subsection{Examples\label{Subsection:examples}}

In this section we apply Theorem \ref{Theorem:left-heart-B} to describe the
left heart of a number of important categories of Polish $R$-modules as a
full subcategory of the category of $R$-modules with a Polish cover.

\begin{definition}
\label{Definition:extensions}Suppose that $\mathcal{B}$ is a strictly full
quasi-abelian subcategory of $\mathcal{A}_{R}$. We say that $\mathcal{B}$ is
a \emph{thick subcategory }of $\mathcal{A}_{R}$ \cite[Definition 8.3.21(iv)]%
{kashiwara_categories_2006} if it is closed under extensions, i.e., for
every short exact sequence 
\begin{equation*}
0\rightarrow A\rightarrow B\rightarrow C\rightarrow 0
\end{equation*}%
of Polish $R$-modules, we have that if $A$ and $C$ are in $\mathcal{B}$,
then $B$ is in $\mathcal{B}$ as well.
\end{definition}

\begin{proposition}
\label{Proposition:extensions}Suppose that $\mathcal{B}$ is a thick
subcategory of $\mathcal{A}_{R}$. An $R$-homomorphism between $R$-modules
with a $\mathcal{B}$-cover is $\mathcal{B}$-definable if and only if it is
Borel-definable.
\end{proposition}

\begin{proof}
Suppose that $\varphi :\hat{X}/N\rightarrow \hat{Y}/M$ is Borel-definable,
where $\hat{X}/N$ and $\hat{Y}/M$ are $R$-modules with a $\mathcal{B}$%
-cover. Then we have a short exact sequence%
\begin{equation*}
0\rightarrow \left\{ 0\right\} \oplus M\rightarrow \hat{\Gamma}\left(
\varphi \right) \rightarrow \hat{X}\rightarrow 0
\end{equation*}%
where $\hat{\Gamma}\left( \varphi \right) $ is the lift to $\hat{X}\oplus 
\hat{Y}$ of the graph of $\varphi $. We have that $\hat{\Gamma}\left(
\varphi \right) $ is Polish by Theorem \ref{Theorem:factorization}. Since $%
\mathcal{B}$ is a thick subcategory of $\mathcal{A}_{R}$, this implies that $%
\hat{\Gamma}\left( \varphi \right) $ is in $\mathcal{B}$, and hence $\varphi 
$ is $\mathcal{B}$-definable.
\end{proof}

The same argument as in the previous proposition shows the following.

\begin{proposition}
Suppose that $\mathcal{B}$ is a thick subcategory of $\mathcal{A}_{R}$. Let $%
\varphi :\hat{X}/N\rightarrow \hat{Y}/M$ be a Borel-definable $R$%
-homomorphism between $R$-modules with a Polish cover, where $\hat{X}$, $N$,
and $M$ are in $\mathcal{B}$. Then $\hat{Y}$ is in $\mathcal{B}$.
\end{proposition}

\begin{proof}
Adopting the notation of Proposition \ref{Proposition:extensions}, we have
that $\hat{\Gamma}\left( \varphi \right) $ is in $\mathcal{B}$, being an
extension of objects of $\mathcal{B}$. Considering the short exact sequence%
\begin{equation*}
0\rightarrow N\oplus M\rightarrow \hat{\Gamma}\left( \varphi \right)
\rightarrow Y\rightarrow 0
\end{equation*}%
shows that $Y$ is in $\mathcal{B}$ as well.
\end{proof}

As an application of Proposition \ref{Proposition:extensions}, we obtain as
a particular instance of Theorem \ref{Theorem:left-heart-B} a description of
the left heart of a number of categories of Polish modules. We refer to \cite%
{perez-garcia_locally_2010} for the theory of Fr\'{e}chet and Banach spaces
over a non-Archimedean valued field. Recall that a Polish abelian group $G$:

\begin{itemize}
\item is \emph{compactly generated }if it has a compact generating set;

\item is a \emph{topological torsion group} if for every $x\in G$, $\mathrm{%
\mathrm{lim}}_{n\rightarrow \infty }n!x=0$---see \cite[Chapter 3]%
{armacost_structure_1981};

\item a\emph{\ topological }$p$\emph{-group}, for some prime $p$, if for
every $x\in G$, $\mathrm{\mathrm{lim}}_{n\rightarrow \infty }p^{n}x=0$---see 
\cite[Chapter 2]{armacost_structure_1981}.
\end{itemize}

Recall also the notion of locally compact abelian Polish groups that \emph{%
has finite ranks }according to \cite[Definition 2.6]%
{hoffmann_homological_2007}. We let the \emph{dimension }of a locally
compact Polish space to be its \emph{covering dimension}.

\begin{theorem}
\label{Theorem:left-heart-B2}Let $R$ be a Polish ring. Let $\mathcal{B}$ one
of the following full subcategories of $\mathcal{A}_{R}$:

\begin{enumerate}
\item non-Archimedean Polish $R$-modules;

\item locally compact Polish $R$-modules;

\item finite-dimensional locally compact Polish $R$-modules;

\item if $R$ is a field, locally bounded Polish vector spaces over $R$;

\item if $R$ is a separable non-Archimedean valued field, separable Fr\'{e}%
chet spaces over $R$;

\item if $R$ is a separable non-Archimedean valued field, separable Banach
spaces over $R$.
\end{enumerate}

For $R=\mathbb{Z}$, let $\mathcal{B}$ one of the following subcategories of
the category $\mathcal{A}_{\mathbb{Z}}$ of Polish abelian groups:

\begin{enumerate}
\item \setcounter{enumi}{6}abelian real Lie groups;

\item totally disconnected locally compact Polish abelian groups;

\item locally compact Polish abelian groups with no small subgroups;

\item compactly generated Polish abelian groups;

\item locally compact Polish topological torsion abelian groups;

\item locally compact Polish topological abelian $p$-groups, for a given
prime number $p$;

\item locally compact Polish abelian groups that have finite ranks.
\end{enumerate}

Then $\mathcal{B}$ is a thick subcategory of $\mathcal{A}_{R}$, and the
inclusion $\mathcal{B}\rightarrow \mathcal{A}_{R}$ extends to a fully
faithful functor $\mathrm{LH}\left( \mathcal{B}\right) \rightarrow \mathrm{LH%
}\left( \mathcal{A}_{R}\right) =\mathcal{M}_{R}$. Thus, the left heart of $%
\mathcal{B}$ is equivalent to the category that has $R$-modules with a $%
\mathcal{B}$-cover as objects and Borel-definable $R$-homomorphisms as
morphisms.
\end{theorem}

\begin{proof}
We show that each of these categories is a thick subcategory of $\mathcal{A}%
_{R}$, and apply Proposition \ref{Proposition:extensions} and Theorem \ref%
{Theorem:left-heart-B}.

\begin{enumerate}
\item Let 
\begin{equation*}
0\rightarrow A\rightarrow X\rightarrow C\rightarrow 0
\end{equation*}%
be an extension of Polish $R$-modules, where $A,C$ are non-Archimedean. We
identify $A$ with a closed submodule of $X$. By \cite[Proposition 4.6]%
{bergfalk_definable_2020}, there exists a continuous function $\phi
:C\rightarrow X$ that is right inverse for the quotient map $\pi
:X\rightarrow C$; see Remark \ref{Remark:automatic-continuity}. Define $%
\kappa \left( x,y\right) =\delta \phi \left( x,y\right) =\phi \left(
x+y\right) -\phi \left( x\right) -\phi \left( y\right) $. By Proposition \ref%
{Proposition:approximately-linear-lift} one can choose $\phi $ such that for
every open zero neighborhood $U$ of $A$ there exist an open subring $O$ of $%
R $ and an open $O$-submodule $V$ of $C$ such that $\kappa \left( x,y\right)
\in U$ and $\phi \left( \lambda x\right) -\lambda \phi \left( x\right) \in U$
for $x,y\in V$ and $\lambda \in O$.

Consider the abelian Polish group $A\oplus _{\kappa }C$ that is equal to $%
A\times C$ as a Polish space, with group operation defined by $\left(
a,c\right) +\left( a^{\prime },c^{\prime }\right) =\left( a+a^{\prime
}+\kappa \left( c,c^{\prime }\right) ,c+c^{\prime }\right) $. Then the
function $X\rightarrow A\oplus _{\kappa }C$, $x\mapsto \left( x-\phi \pi
\left( x\right) ,\pi \left( x\right) \right) $ is a Borel group isomorphism
with inverse $A\oplus _{\kappa }C\rightarrow X$, $\left( a,c\right) \mapsto
a+\phi \left( c\right) $, and hence it is a homeomorphism. This shows that $%
\phi \left( C\right) $ is a closed subset of $X$, and the sets of the form $%
U_{A}+\phi \left( U_{C}\right) $, where $U_{A}$ is a zero neighborhood in $A$
and $U_{C}$ is a zero neighborhood in $C$, form a basis of zero
neighborhoods for $X$. Let $U_{X}$ be a zero neighborhood in $X$. Consider
an open subring $O$ of $R$ and open $O$-submodules $U_{A}$ of $A$ and $U_{C}$
of $C$ such that $U_{A}+\phi \left( U_{C}\right) \subseteq U_{X}$ and for
every $x,y\in V$ and $\lambda \in P$, $\kappa \left( x,y\right) \in U_{A}$
and $\phi \left( \lambda x\right) -\lambda \phi \left( x\right) \in U_{A}$.
Then we have that $U_{A}+\phi \left( U_{C}\right) $ is an open $O$-submodule
of $X$ contained in $U$. This proves that $X$ is non-Archimedean.

\item It is the content of \cite[Theorem 5.22, Theorem 5.25]%
{hewitt_abstract_1979} that the category of locally compact Polish groups is
thick.

\item If $X$ is a Polish $R$-module and $Y$ is a closed $R$-submodule, then
we have that%
\begin{equation*}
\mathrm{\dim }\left( X\right) =\mathrm{\dim }\left( Y\right) +\mathrm{\dim }%
\left( X/Y\right) \text{;}
\end{equation*}%
see \cite{nagami_dimension-theoretical_1962}. Thus, we have that $X$ is
finite-dimensional if and only if $Y$ and $X/Y$ are finite-dimensional.

\item Let $R$ be a field, and 
\begin{equation*}
0\rightarrow M\rightarrow X\rightarrow Y\rightarrow 0
\end{equation*}%
be an extension of Polish $R$-vector spaces, where $M$ and $Y$ are locally
bounded. Since the quotient map $\pi :X\rightarrow Y$ is open, $M$ is closed
in $X$, and $M,Y$ are locally bounded, we have that there exists an open
zero neighborhood $A$ in $X$ such that $A\cap M$ and $\pi \left( A\right) $
are bounded in $M$ and $Y$, respectively. Fix a decreasing neighborhood
basis $\left( W_{i}\right) $ for $0$ in $R$. Fix $i_{0}\in \mathbb{N}$ and
an open zero neighborhood $B\subseteq A$ in $X$ such that $%
B+W_{i_{0}}B\subseteq A$. We claim that $B$ is bounded in $X$. Suppose that
this is not the case. Then there exist a zero neighborhood $V_{0}$ in $X$, a
vanishing sequence $\left( \lambda _{i}\right) $ in $R$, and a sequence $%
\left( b_{i}\right) $ in $X$ such that $\lambda _{i}\in W_{i}$, $b_{i}\in B$%
, and $\lambda _{i}b_{i}\notin V_{0}$ for $i\in \mathbb{N}$.

Fix a zero neighborhood $V_{0}$ in $X$. Fix $i_{1}\geq i_{0}$ and a zero
neighborhood $V_{1}$ in $X$ such that $V_{1}+\left( W_{i_{1}}V_{1}\cup
V_{1}\right) \subseteq V_{0}$. Fix $i\geq i_{1}$. Since $\pi \left( A\right) 
$ is bounded in $Y$, there exists $\alpha _{i}\geq i$ such that $\lambda
_{\alpha _{i}}\pi \left( b_{\alpha _{i}}\right) \in W_{i}\pi \left(
V_{1}\right) $ and $\lambda _{i}^{-1}\lambda _{\alpha _{i}}\in W_{i_{1}}$.
Thus there exist $v_{i}\in V_{1}$ such that $\lambda _{\alpha _{i}}b_{\alpha
_{i}}-\lambda _{i}v_{i}\in M$. For every $i\geq i_{1}$, we can write%
\begin{equation*}
\lambda _{\alpha _{i}}b_{\alpha _{i}}-\lambda _{i}v_{i}=\lambda _{i}\left(
\lambda _{i}^{-1}\lambda _{\alpha _{i}}b_{\alpha _{i}}-v_{i}\right)
\end{equation*}%
where%
\begin{equation*}
\lambda _{i}^{-1}\lambda _{\alpha _{i}}b_{\alpha _{i}}-v_{i}\in \left(
W_{i_{1}}B+B\right) \cap M\subseteq A\cap M\text{.}
\end{equation*}%
Since $A\cap M$ is bounded in $M$, there exists $i_{2}\geq i_{1}$ such that%
\begin{equation*}
\lambda _{i_{2}}^{-1}\lambda _{\alpha _{i_{2}}}b_{\alpha
_{i_{2}}}-v_{i_{2}}=\lambda _{i_{2}}(\lambda _{i_{2}}^{-1}\lambda _{\alpha
_{i_{2}}}b_{\alpha _{i_{2}}}-v_{i_{2}})\in V_{1}
\end{equation*}%
and hence 
\begin{equation*}
\lambda _{\alpha _{i_{2}}}b_{\alpha _{i_{2}}}=(\lambda _{\alpha
_{i_{2}}}b_{\alpha _{i_{2}}}-\lambda _{i_{2}}v_{i_{2}})+\lambda
_{i_{2}}v_{i_{2}}\in V_{1}+W_{0}V_{1}\subseteq V_{0}\text{.}
\end{equation*}%
This contradicts the fact that $\lambda _{\alpha _{i_{2}}}b_{\alpha
_{i_{2}}} $ does not belong to $V_{0}$.

\item This is a particular instance of (1), since for a separable
non-Archimedean valued field $R$, the separable Fr\'{e}chet spaces over $R$
are precisely the non-Archimedean Polish vector spaces over $R$---see \cite[%
Definition 3.5.1, Theorem 3.5.2]{perez-garcia_locally_2010}.

\item This follows from (10) and (11), since for a separable non-Archimedean
valued field $R$, the separable Banach spaces over $R$ are precisely the
locally bounded separable Fr\'{e}chet spaces over $R$; see \cite[Definition
2.1.1, Theorem 3.6.2]{perez-garcia_locally_2010}.

\item Suppose that $G$ is a locally compact Polish group, and $H\subseteq G$
is a closed subgroup. If $G$ is a Lie group, then $H$ is a Lie group by the
Closed Subgroup Theorem for Lie groups \cite[Theorem 20.12]%
{lee_introduction_2013}. If $H$ is normal in $G$, then $G/H$ is a Lie group
by \cite[Theo7]{lee_introduction_2013}. If $G$ is abelian and both $H$ and $%
G/H$ are Lie groups, then $G$ is a Lie group by \cite{tao_central_2011}; see
also \cite{gleason_spaces_1950,kuranishi_euclidean_1950}. Notice also that a
continuous group homomorphism between real Lie groups is real analytic; see 
\cite[Theorem 2.11.2]{varadarajan_lie_1984}.

This assertion also follows from \cite[Theorem 2.6(1)]%
{moskowitz_homological_1967}, after noticing that a locally compact abelian
Polish group is a Lie group if and only if it \emph{has no small subgroups},
which means that it has a zero neighborhood that does not contain any
nontrivial subgroups---see \cite{moskowitz_homological_1967}.

\item This follows from (1) and (2), since a locally compact Polish group is
totally disconnected if and only if it is non-Archimedean.

\item Compactly generated abelian Polish groups form a thick subcategory of $%
\mathcal{A}_{\mathbb{Z}}$ by \cite[Theorem 2.6(2)]%
{moskowitz_homological_1967}.

\item Locally compact abelian Polish topological torsion groups form a thick
subcategory of $\mathcal{A}_{\mathbb{Z}}$ by \cite[3.17]%
{armacost_structure_1981}.

\item That locally compact abelian Polish topological $p$-groups form a
thick subcategory of $\mathcal{A}_{\mathbb{Z}}$ follows easily from (10)
after observing that a topological torsion locally compact abelian Polish
group $G$ is a topological $p$-group if and only if it is equal to its $%
\mathbb{Z}_{p}$-component in the sense of \cite[Definition 4.12]%
{armacost_structure_1981}; see \cite[Remark 3.9(a) and Example 4.13(a)]%
{armacost_structure_1981}.

\item Locally compact Polish abelian groups that have finite ranks form a
thick subcategory of $\mathcal{A}_{\mathbb{Z}}$ by \cite[Proposition 2.9]%
{hoffmann_homological_2007}.
\end{enumerate}
\end{proof}

A Borel function $f:X\rightarrow Y$ between locally compact Polish spaces is
called \emph{locally bounded }if, for every compact subset $C$ of $X$, $%
f\left( C\right) $ has compact closure in $Y$.

\begin{corollary}
\label{Corollary:locally-bounded}Suppose that $\varphi :X\rightarrow Y$ is a 
$R$-homomorphism between $R$-modules with a Polish cover $X=\hat{X}/N$ and $%
Y=\hat{Y}/M$, where $\hat{X}$ and $M$ are locally compact. Then the
following assertions are equivalent:

\begin{enumerate}
\item $\varphi $ is Borel-definable;

\item $\varphi $ has a \emph{locally bounded }Borel lift $\hat{X}\rightarrow 
\hat{Y}$.
\end{enumerate}

If furthermore $\hat{X}$ is finite-dimensional, then these conditions are
equivalent to:

\begin{enumerate}
\item $\varphi $ has an\emph{\ approximately additive locally continuous }%
Borel lift.
\end{enumerate}
\end{corollary}

\begin{proof}
Suppose that (1) holds. By Theorem \ref{Theorem:left-heart-B2}, we can write 
$\varphi =\psi \circ \sigma ^{-1}$ where $Z=\hat{Z}/L$ is an $R$-module with
a Polish cover, $\psi :Z\rightarrow Y$ and $\sigma :Z\rightarrow X$ are
liftable $R$-homomorphisms such that $\hat{Z}$ is an extension of $\hat{X}$
by $M$ (and, in particular, a locally compact Polish $R$-module) and $\sigma 
$ lifts to a surjective continuous $R$-homomorphism $\hat{\sigma}:\hat{Z}%
\rightarrow \hat{X}$. By the main theorem in \cite{kehlet_cross_1984}, we
have that $\hat{\sigma}$ has a Borel locally bounded right inverse $g:\hat{X}%
\rightarrow \hat{Z}$. (Notice that \cite{kehlet_cross_1984} adopts the
terminology of \cite[Section 51]{halmos_measure_1950} where the
\textquotedblleft Baire $\sigma $-algebra\textquotedblright\ on a
topological space is the $\sigma $-algebra generated by the compact $%
G_{\delta }$ sets. This coincides with the Borel $\sigma $-algebra in the
case of locally compact Polish spaces. Thus, a \textquotedblleft Baire
function\textquotedblright\ between locally compact Polish spaces in the
sense of \cite{kehlet_cross_1984} is just a Borel function.) Thus, if $\hat{%
\psi}:\hat{Z}\rightarrow \hat{Y}$ is a continuous $R$-homomorphism that
lifts $\psi $, then we have that $\hat{\psi}\circ g$ is a locally bounded
Borel lift for $\varphi $. If furthermore $\hat{X}$ is finite-dimensional,
then by \cite[Theorem 8]{mostert_sections_1956}, we have that there exist a
zero neighborhood $V$ of $\hat{X}$ and a continuous function $g:V\rightarrow 
\hat{Z}$ that is a right inverse for $\hat{\sigma}|_{\hat{\sigma}^{-1}\left(
V\right) }:\hat{\sigma}^{-1}\left( V\right) \rightarrow V$. Thus, if $\hat{%
\psi}:\hat{Z}\rightarrow \hat{Y}$ is a continuous $R$-homomorphism that
lifts $\psi $, then we have that $\hat{\varphi}:=\hat{\psi}\circ g$ is a
continuous local lift for $\varphi $. We can extend $\hat{\varphi}$ to a
Borel lift on $\hat{X}$ by Lemma \ref{Lemma:locally-continuously}. Since $M$
is locally compact, we have that $M$ has a basis of zero neighborhoods that
are compact, and in particular closed in $\hat{Y}$. Therefore we have that $%
\varphi $ has an approximately additive Borel lift that is continuous in a
zero neighborhood of $\hat{X}$ by Proposition \ref%
{Proposition:approximately-additive}.
\end{proof}

\begin{corollary}
\label{Corollary:locally-analytic}Suppose that $\varphi :G\rightarrow H$ is
a group homomorphism between abelian groups with a real Lie cover $G=\hat{G}%
/N$ and $H=\hat{H}/M$. Then the following assertions are equivalent:

\begin{enumerate}
\item $\varphi $ is Borel-definable;

\item $\varphi $ has an approximately additive Borel lift that is \emph{%
analytic} on an open zero neighborhood in $\hat{G}$.
\end{enumerate}
\end{corollary}

\begin{proof}
The proof is the same as the proof of Corollary \ref%
{Corollary:locally-bounded} the fact that a continuous group homomorphism
between real Lie groups is real analytic \cite[Theorem 2.11.2]%
{varadarajan_lie_1984}, and that if $\pi :X\rightarrow Y$ is a surjective
continuous homomorphism between real Lie groups, then there exists a zero
neighborhood $V$ of $X$ and an analytic right inverse $g:V\rightarrow X$ for 
$\pi |_{\pi ^{-1}\left( V\right) }:\pi ^{-1}\left( V\right) \rightarrow V$ 
\cite[Theorem 2.9.5]{varadarajan_lie_1984}; see also Remark \ref%
{Remark:approximately-additive-analytic}.
\end{proof}

\bibliographystyle{amsalpha}
\bibliography{bibliography}
\printindex

\end{document}